\title{A marvellous embedding of the Lagrangian Grassmannian}
\author{Kevin Purbhoo%
\thanks{Research partially supported by an NSERC Discovery grant.}
}
\newlength\circlesize
\newcommand{\bigmid}{\ \big|\ }
\newcommand{\CC}{\mathbb{C}}
\newcommand{\ZZ}{\mathbb{Z}}
\newcommand{\RR}{\mathbb{R}}
\newcommand{\CP}{\mathbb{CP}}
\newcommand{\RP}{\mathbb{RP}}
\newcommand{\PP}{\mathbb{P}}
\newcommand{\VV}{\mathbb{V}}
\newcommand{\KK}{\mathbb{K}}
\newcommand{\KP}{\mathbb{KP}}
\newcommand{\Tlambda}{{\widetilde \lambda}}
\newcommand{\Tkappa}{{\widetilde \kappa}}
\newcommand{\redX}{\overline{X}}
\newcommand{\redF}{\overline{F}}
\newcommand{\calA}{\mathcal{A}}
\newcommand{\calR}{\mathcal{R}}
\newcommand{\redA}{\overline{\calA}}
\newcommand{\redL}{\overline{\Lambda}}
\newcommand{\redcell}{\redX\vphantom{X}^\circ}
\newcommand{\nth}{\ensuremath{^\text{th}}\xspace}
\newcommand{\pol}[1]{\CC_{#1}[z]}
\newcommand{\Ppol}{\PP\big(\pol{2M}\big)}
\newcommand{\SL}{\mathrm{SL}}
\newcommand{\Sp}{\mathrm{Sp}}
\newcommand{\Ogroup}{\mathrm{O}}
\newcommand{\PGL}{\mathrm{PGL}}
\newcommand{\Wr}{\mathrm{Wr}}
\newcommand{\Gr}{\mathrm{Gr}}
\newcommand{\LG}{\mathrm{LG}(n)}
\newcommand{\OG}{\mathrm{OG}(n,2n{+}1)}
\newcommand{\Rect}{\,{\setlength{\yframethickness}{.7pt}\yhwratio{3:5}%
\mathchoice%
{{\yng[bb][1.05ex](1)}}
{{\yng[bb][1.05ex](1)}}
{{\yng[bb][.7ex](1)}}
{{\yng[bb][.7ex](1)}}
}\,}
\newcommand{\shiftedstair}
{\,{\setlength\unitlength{1.5ex}%
\begin{picture}(1,1)(0,0.25)
\thicklines
\put(0,1){\line(1,-1){1}}
\put(0,1){\line(1,0){1}}
\put(1,0){\line(0,1){1}}
\end{picture}%
}\,}
\newcommand{\stair}
{\,{\setlength\unitlength{1.5ex}%
\begin{picture}(1,1)(0,0.25)
\thicklines
\put(0,0){\line(1,1){1}}
\put(0,1){\line(1,0){1}}
\put(0,0){\line(0,1){1}}
\end{picture}%
}\,}
\newcommand{\SYT}{\mathsf{SYT}}
\newcommand{\DST}{\mathsf{DST}}
\newcommand{\AST}{\mathsf{AST}}
\newcommand{\calM}{\mathcal{M}}
\newcommand{\bolda}{{\bf a}}
\newcommand{\boldb}{{\bf b}}
\newcommand{\smallidmatrix}
{\left(\begin{smallmatrix} 1 & 0 \\ 0 & 1\end{smallmatrix}\right)}
\newcommand{\quarterrotate}
{\left(\begin{smallmatrix} 1 & -1 \\ 1 & 1\end{smallmatrix}\right)}
\newcommand{\imag}{{\sqrt{\!-\!1}}}
\newcommand{\fold}{\mathsf{fold}}
\newcommand{\unfold}{\mathsf{rect}^\circ}
\newcommand{\one}{{\mathchoice%
{\yng[bb][.75ex](1)}
{\yng[bb][.75ex](1)}
{\yng[bb][.5ex](1)}
{\yng[bb][.5ex](1)}
}}
\newenvironment{packedenumi}{
\begin{enumerate}[(i)]
  \setlength{\itemsep}{0pt}
}{\end{enumerate}}
\newenvironment{packedenum}{
\begin{enumerate}
  \setlength{\itemsep}{0pt}
}{\end{enumerate}}
\newtheorem{lemma}{Lemma}[section]
\newtheorem{theorem}[lemma]{Theorem}
\newtheorem{proposition}[lemma]{Proposition}
\newenvironment{restatetheorem}[1]
   {\begingroup \newtheorem*{theoremx}{#1}\begin{theoremx}\em}
   {\end{theoremx}\endgroup}
\theoremstyle{definition}
\newtheorem{definition}[lemma]{Definition}
\newtheorem{remark}[lemma]{Remark}
\newtheorem{convention}[lemma]{Convention}
\numberwithin{equation}{section}
\numberwithin{figure}{section}
\numberwithin{table}{section}
\definecolor{DarkBlue}{rgb}{0, 0.1, 0.55}
\definecolor{DarkRed}{rgb}{0.45, 0, 0}
\newcommand{\defn}[1]{\textbf{#1}}
\begin{document}
\maketitle

\begin{abstract}
We give a embedding of the Lagrangian Grassmannian $\LG$ 
inside an ordinary Grassmannian that is well-behaved with respect to the
Wronski map.  As a consequence, we obtain an analogue of the 
Mukhin-Tarasov-Varchenko theorem for $\LG$.
The restriction of the Wronski map to $\LG$ has degree
equal to the number of shifted or unshifted tableaux of staircase shape.
For special fibres one can define bijections, which, in turn, gives a 
bijection between these two classes of tableaux.  
The properties of these bijections lead a geometric 
proof of a branching rule for the cohomological map 
$H^*(\Gr(n,2n)) \otimes H^*(\LG) \to H^*(\LG)$, induced by the
diagonal inclusion $\LG \hookrightarrow \LG \times \Gr(n,2n)$.
We also discuss applications to the orbit structure of jeu de taquin
promotion on staircase tableaux.
\end{abstract}

%%%%%%%%%%%%%%%%%%%%%%%%%%%%%%%%%%%%%%%%%%%%%%%%%%%%%%%%%%%%%%%%%%%%%%
%%%%%%%%%%%%%%%%%%%%%%%%%%%%%%%%%%%%%%%%%%%%%%%%%%%%%%%%%%%%%%%%%%%%%%

\section{Introduction}

The purpose of this paper is to relate the geometry of the Lagrangian
Grassmannian $\LG$ to the combinatorics of shifted and unshifted 
Young tableaux that describe its cohomology.  We will do this 
by defining a very special embedding $\LG$ inside an ordinary Grassmannian.
The idea is that one can label certain points of a Grassmannian by 
tableaux, and this labelling encodes certain information about
intersections of Schubert varieties.  The main result of this paper
describes how this tableau labelling restricts to $\LG$ under our
embedding.  We will use this to give coherent geometric explanations for
some lesser known (but beautiful and slightly mysterious) facts in 
Schubert calculus and tableau combinatorics.

%We begin by discussing the relevant classes of tableaux and
%the main combinatorial identity that motivates this work.
Let $\SYT(\Rect)$ denote the set of standard Young tableaux whose
shape is the $n \times (n{+}1)$ rectangle $\Rect$.  As a matter of 
convenience, we will sometimes depict our tableaux with entries from 
an arbitrary totally ordered alphabet, rather than the usual positive 
integers.  
%We do so with the understanding that
%such a tableau is implicitly identified with one whose entries are 
%the usual $1,2,\dots, n(n{+}1)$.
For example, Figure \ref{fig:symmtableau} shows two 
tableaux in $\SYT(\Rect)$ with entries from the alphabet
$\calM := \{1,2, \dots, M, 1', 2', \dots, M'\}$,
where $M := \frac{n(n+1)}{2}$.  
In the tableau on the left, the elements of $\calM$ are ordered 
\begin{equation}
\label{eqn:DSTorder}
   1' < 1 < 2' < 2 < \dots < M' < M\,.
\end{equation}
This tableau has a symmetry: for $i \geq j$, if the entry
in row $i$ and column $j$ is $k$ or $k'$,
then the entry in row $j$ and column $i+1$ is $k'$ or $k$.  
We call this
a \defn{diagonal symmetry}, and we denote by $\DST(\Rect)$ the set of 
of all diagonally symmetrical tableaux of shape $\Rect$.
The tableau on the right of Figure~\ref{fig:symmtableau} also 
has entries from $\calM$, but this time
the elements of $\calM$ are ordered
\begin{equation}
\label{eqn:ASTorder}
   1 < 2 <  \dots < M < M' < \dots < 2' < 1'\,.
\end{equation}
This tableaux also has a symmetry: if the entry $k$ is in row $i$ 
and column $j$, then the entry $k'$ is in row $n+1-j$ and column $n+2-i$.
We call this an \defn{antidiagonal symmetry}, and we denote by $\AST(\Rect)$
the set of all antidiagonally symmetrical tableaux of shape $\Rect$.
These two classes of tableaux can be regarded as ``doubled'' versions of
shifted and unshifted staircase tableaux, which are central to the theory of
$H^*(\LG)$ and Schur $P$-, $Q$- and $S$-functions,
(see \cite{HH, Pra, Sag, Ste, Wor}).  
%\comment{other references?}

%%%%%%%%%%%%%%%%%%%%%%%%%%%%%%%%%%%%%%%%%%%%%%%%%%%%%%%%%%%%%%%%%
\begin{figure}[tb]
\[
\begin{young}[t][auto=symmtableau]
!1'  & ?1 & ?2  & ?3'  &  ?5'  \\
!2' & !4'  & ?4 & ?6'  &  ?8   \\
!3 & !6 & !7'  & ?7 &  ?9   \\
!5  & !8' & !9' & !10' & ?10
\end{young}
\qquad \qquad \qquad
\begin{young}[t][auto=symmtableau]
!1  & !2  & !7  & !10 & ?10'  \\
!3  & !6  & !8  & ?8' & ?7'  \\
!4  & !9  & ?9' & ?6' & ?2'   \\
!5  & ?5' & ?4' & ?3' & ?1'
\end{young}
\]
\caption{A diagonally symmetrical tableau (left), and an antidiagonally
symmetrical tableau (right), with $n=4$.}
\label{fig:symmtableau}
\end{figure}
%%%%%%%%%%%%%%%%%%%%%%%%%%%%%%%%%%%%%%%%%%%%%%%%%%%%%%%%%%%%%%%%%

It is a curious fact that $|\DST(\Rect)| = |\AST(\Rect)|$.
This can be proved in a variety of ways: using hook-length formulae
\cite{FRT,Thr};
by interpreting both sides as a statement about the cohomology of the
Lagrangian Grassmannian; or by an explicit bijection.
The procedure
used to define the bijection arises also in the context of domino 
tableaux and self-evacuating tableaux \cite{vLee, Pur-ribbon}.
%; however 
%to our knowledge it has never been given a proper name.  
We will refer to it as \defn{folding} a tableau. 
Given $T \in \SYT(\Rect)$ with entries from
$\calM$ ordered as in \eqref{eqn:ASTorder}, define
$\fold(T) \in \SYT(\Rect)$ to be the result of the following operation: 
For each $k$ from $1$ to $M$, slide the box containing $k'$ through 
the subtableau formed by entries
$\{k, k{+1}, \dots ,M, M', \dots, (k{+}1)'\}$.  After the $k$\nth step, the
entries are ordered
\[
  1' < 1 < \dots < k' < k < k{+}1 < \dots < M < M' < \dots < (k{+}1)'\,,
\]
and in particular 
the entries of $\fold(T)$ are ordered as in \eqref{eqn:DSTorder}.
The procedure is illustrated in Figure~\ref{fig:fold}.  
Since each step is reversible, it is clear 
that $\fold : \SYT(\Rect) \to \SYT(\Rect)$ is a bijection.  
It is far less clear --- but
nevertheless true --- that $T \in \AST(\Rect)$ if and only if 
$\fold(T) \in \DST(\Rect)$.
This can be proved combinatorially, using properties of mixed insertion
\cite{Hai-mixed}.
One of the goals of paper is to explain the 
relationship between this bijection and the cohomological argument.
The main ingredient is a rather remarkable morphism of algebraic
varieties, called the Wronski map.

%%%%%%%%%%%%%%%%%%%%%%%%%%%%%%%%%%%%%%%%%%%%%%%%%%%%%%%%%%%%%%%%%
\begin{figure}[tb]
\begin{multline*}
T \ = \ {\begin{young}[c]
!!1 & !!2 & !!4 & !!4' \\
!!3 & !!6 & !!6'& !!2' \\
!!5 & !!5'& !!3'& !!!1'
\end{young}}
\quad \to \quad 
{\begin{young}[c]
???1' & ???1 & !!2 & !!4  \\
!!3 & !!6 & !!6'& !!4' \\
!!5 & !!5'& !!3'& !!!2'
\end{young}}
\quad \to \quad 
{\begin{young}[c]
???1' & ???1 & ???2 & !!4  \\
???2' & !!3 & !!6'& !!4' \\
!!5 & !!6 & !!5'& !!!3'
\end{young}}  
\quad \to \quad 
{\begin{young}[c]
???1' & ???1 & ???2 & !!4  \\
???2' & ???3' & ???3 & !!6' \\
!!5 & !!6 & !!5'& !!!4'
\end{young}}
%\quad \to \quad 
\\[2ex]
\quad \to \quad 
{\begin{young}[c]
???1' & ???1 & ???2 & ???4  \\
???2' & ???3' & ???3 & !!6' \\
???4' & !!5 & !!6 & !!!5'
\end{young}}
\quad \to \quad 
{\begin{young}[c]
???1' & ???1 & ???2 & ???4  \\
???2' & ???3' & ???3 & ???5' \\
???4' & ???5 & !!6 & !!!6'
\end{young}}
\quad \to \quad 
{\begin{young}[c]
???1' & ???1 & ???2 & ???4  \\
???2' & ???3' & ???3 & ???5' \\
???4' & ???5 & ???6' & ???6
\end{young}}
\ =\ \fold(T)
\end{multline*}
\caption{Folding a tableau. At each step, the entry in the lower right 
corner slides through the shaded subtableau.}
\label{fig:fold}
\end{figure}
%%%%%%%%%%%%%%%%%%%%%%%%%%%%%%%%%%%%%%%%%%%%%%%%%%%%%%%%%%%%%%%%%

Let $X := \Gr(n,\pol{2n})$ be the Grassmannian of $n$-planes in
the $2n{+}1$-dimensional vector space of polynomials of degree at most $2n$.  
The \emph{Wronski map} 
\begin{equation}
\label{eqn:wronski}
\Wr : X \to \Ppol
\end{equation}
%is a map that 
assigns to each $x \in X$ a polynomial $\Wr(x;z)$ 
of degree at most $2M$,
%which is denoted $\Wr(x;z)$ and defined up to a scalar multiple.
considered up to scalar multiple (see Section~\ref{sec:background}).
This map has a number of pleasant properties.  
It is equivariant with respect to the group of M\"obius transformations,
which acts on both $X$ and $\Ppol$.
Eisenbud and Harris \cite{EH} proved that $\Wr$ is a flat, finite morphism 
of degree $|\SYT(\Rect)|$;
hence for any polynomial $h(z) \in \pol{2M}$, the fibre $\Wr^{-1}(h(z))$, 
has exactly $|\SYT(\Rect)|$ points, counting with multiplicity.
Moreover, suppose the roots of $h(z)$ lie on a circle in $\CP^1$. 
(Here and throughout this paper, if $\deg(h(z)) < 2M$ we 
regard $h(z)$ as having a root of multiplicity $2M-\deg(h(z))$ at $\infty$.)
In this case, it is a consequence of the Mukhin-Tarasov-Varchenko 
theorem \cite{MTV1,MTV2} that one can define a surjective 
correspondence $\SYT(\Rect) \to \Wr^{-1}(h(z))$, which we denote by 
$T \mapsto x_T$.  
When $h(z)$ has distinct roots, this is a bijection, and in general
%the properties are even more interesting
%if the roots of $h(z)$ are not distinct.  In particular, the
the correspondence encodes information about how certain Schubert 
varieties intersect \cite{Pur-Gr}.  
As such, it can be used to produce geometric
proofs of a variety of non-trivial facts involving tableaux.
%If the roots are not distinct,
%then the preimages of this map are given in terms of the 
%dual equivalence relation on tableaux.

The correspondence can also
be used to study subvarieties of the Grassmannian that are 
well-behaved with respect to the Wronski map.
For example, there is
an ``obvious'' embedding of the orthogonal Grassmannian $\OG$ in $X$.
It is defined using a symmetric bilinear form $\langle \cdot, \cdot \rangle$
on $\pol{2n}$ 
with the following properties: (i) the form is invariant under M\"obius 
transformations; and (ii) the standard flag is an orthogonal flag.
These two conditions on $\langle \cdot, \cdot \rangle$ are enough 
to ensure that the embedded orthogonal
Grassmannian interacts very nicely with the Wronski map.
In \cite{Pur-OG, Pur-shifted} we proved the following theorem.

\begin{theorem}
\label{thm:OG}
Let $Y$ denote the image of $\OG$ embedded in $X$.
\begin{packedenumi}
\item 
If $x \in Y$ then $\Wr(x;z)$ is a square.
\item
Let $\DST'(\Rect) \subset \DST(\Rect)$ 
denote the set of diagonally symmetrical tableaux 
%of shape $\Rect$ 
with the property that $k'$ is left of $k$, for all 
$k = 1, \dots, M$.
Suppose $\Wr(x;z)$ is a square with roots that lie on a circle in $\CP^1$.
Then $x \in Y$ if and only if $x=x_T$
for some tableau $T \in \DST'(\Rect)$.
\end{packedenumi}
\end{theorem}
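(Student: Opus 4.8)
\noindent\emph{Sketch of an approach.}
My plan is to treat the two parts of the theorem separately. Part~(i) is intrinsic to the orthogonal structure, and I would settle it by a parity computation on ramification; part~(ii) I would deduce from a Schubert-calculus degree count on $\OG$ together with the description of the points $x_T$ as intersections of Schubert varieties.

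For part~(i), I would start from the fact that a M\"obius-invariant symmetric bilinear form on $\pol{2n}$ is unique up to scalar, and that --- as one checks by expanding in the monomial basis --- for this form the standard flag $F_j = \pol{j-1}$ satisfies $F_j^\perp = F_{2n+1-j}$, so it is an orthogonal flag; by M\"obius-equivariance the osculating flag $\mathcal F(z_0)$ at every $z_0 \in \CP^1$ is then an orthogonal flag as well. Now fix $x \in Y$, corresponding to an isotropic $n$-plane $V$, and a point $z_0$, and let $J = \{j_1 < \dots < j_n\}$ be the jump set of $V$ relative to $\mathcal F(z_0)$, so that $V$ lies in the open Schubert cell of the partition $\lambda$ with $\lambda_i = n{+}1{+}i-j_i$, and $\ord_{z_0}\Wr(x;z) = |\lambda|$. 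Using the identity $(V \cap \mathcal F(z_0)_j)^\perp = V^\perp + \mathcal F(z_0)_{2n+1-j}$ together with $\dim V^\perp = n{+}1$ and $V \subseteq V^\perp$, I would show that $J$ meets each pair $\{k,\,2n{+}2{-}k\}$, $1 \le k \le n$, in exactly one element and avoids $n{+}1$. It then follows that $\sum_{j \in J} j \equiv \tfrac{n(n+1)}{2} \pmod 2$, whence $|\lambda| = \tfrac{3n(n+1)}{2} - \sum_{j \in J} j \equiv n(n+1) \equiv 0 \pmod 2$. So $\Wr(x;z)$ has even vanishing order at every point of $\CP^1$ (including $\infty$, with the stated convention), and is therefore a perfect square.

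For part~(ii), suppose $\Wr(x;z) = p(z)^2$ with the roots of $p$ on a circle. Using part~(i) and the same computation, the ramification partitions that occur on $Y$ are exactly the ``$\OG$-type'' shapes --- those in the image of the Schubert classes of $\OG$ under the embedding --- and the only one of size $2$ is the single row $(2)$; so when $p$ has distinct roots $a_1, \dots, a_M$, the set $Y \cap \Wr^{-1}(p^2)$ is the intersection inside $\OG$ of the $M = \dim \OG$ codimension-one special Schubert varieties determined by the osculating flags $\mathcal F(a_1), \dots, \mathcal F(a_M)$. Next I would use part~(i) to factor $\Wr|_Y$ through the squaring embedding $\PP(\pol M) \hookrightarrow \Ppol$ as a finite morphism of smooth $M$-folds, so that it has a well-defined degree; by the Chevalley formula on $\OG$ (where $\sigma_1$ is the Schubert class of codimension one) this degree equals $\int_{\OG}\sigma_1^{M}$, which is the number of standard shifted tableaux of staircase shape $(n, n{-}1, \dots, 1)$ and hence equals $|\DST'(\Rect)|$ by the ``doubling'' correspondence. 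Invoking the orthogonal analogue of the Mukhin-Tarasov-Varchenko theorem --- transversality of osculating Schubert varieties at points of a circle --- the fibre $Y \cap \Wr^{-1}(p^2)$ is then reduced, of cardinality exactly $|\DST'(\Rect)|$. It would thus suffice to show that $T \mapsto x_T$ maps $\DST'(\Rect)$ injectively into this fibre; the general case of roots on a circle would follow by a limit argument.

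To establish that injection I would use the description of $x_T$ from \cite{Pur-Gr} as the unique point cut out by a chain of Schubert conditions read off from the cells of $T$ in order, with reference flags the osculating flags at the roots of $\Wr(x_T)$. The key point is that $T \in \DST(\Rect)$ is diagonally symmetric precisely when this chain is ``self-mirror'' --- the step reading the cell of $k$ paired with the step reading the cell of $k'$ --- so that the chain can be realised entirely by Schubert varieties of $\OG$, forcing its intersection point into $Y$; the additional requirement that $k'$ lie left of $k$ records that the $\OG$-Schubert partition at each doubled root is $(2)$ rather than $(1,1)$, which by part~(i) is the only possibility for an isotropic subspace. The step I expect to be the main obstacle is precisely this: making the dictionary between the chain-of-Schubert-conditions presentation of $x_T$ and the combinatorial notion of diagonal symmetry fully precise, and verifying that ``$k'$ left of $k$'' is exactly the constraint that survives the passage to $\OG$. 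A possibly smoother alternative is to prove the orthogonal analogue of the Mukhin-Tarasov-Varchenko theorem directly for the ``half-Wronski'' morphism $W \colon Y \to \PP(\pol M)$ obtained above, and then reduce part~(ii) to the purely combinatorial statement that the doubling correspondence carries standard shifted staircase tableaux onto $\DST'(\Rect)$.
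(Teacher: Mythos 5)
First, a point of reference: the paper does not actually prove Theorem~\ref{thm:OG} --- it is quoted from \cite{Pur-OG, Pur-shifted} --- so the natural benchmark is the method of those papers and the parallel argument for $\LG$ given in Section~\ref{sec:embedding}. Measured against that, your part~(i) is sound: the M\"obius-invariant symmetric form makes every osculating flag $F_\bullet(a)$ an orthogonal flag, the jump set of a maximal isotropic $n$-plane relative to such a flag meets each pair $\{k,\,2n{+}2{-}k\}$ exactly once and avoids $n{+}1$, and combined with Lemma~\ref{lem:schubertwronskian} this gives even order of vanishing of $\Wr(x;z)$ at every point of $\CP^1$. This is the same parity-of-Schubert-position mechanism as Lemma~\ref{lem:conjugates}(i) and Theorem~\ref{thm:main}(i), and your refinement that the only size-two position is $(2)$ follows from the same jump-set structure.

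The genuine gap is in part~(ii), and it is the one you flag yourself: the equivalence ``$x_T \in Y$ iff $T \in \DST'(\Rect)$'' is the real content of the statement, and your route to it rests on reading $x_T$ as ``the unique point cut out by a chain of Schubert conditions at the roots.'' That is not how $x_T$ is defined: Definition~\ref{def:correspondence} (and the equivalent descriptions in \cite{Pur-Gr}) produce $x_T$ by continuation along a degeneration of the root multiset toward $0$ and $\infty$, and the working mechanism --- both in \cite{Pur-shifted} and in the proof of Theorem~\ref{thm:main}(ii) here --- is to lift exactly such degenerations, note that the lifted path stays in $Y$ (resp.\ $\Omega$) because the subvariety is closed and the square/evenness condition on the Wronskian persists along the path, and then read off the shape of $T_{[1,2k]}$ at time $0$ via Lemma~\ref{lem:schubertwronskian} and Theorem~\ref{thm:correspondence}(i); the converse then follows from the count, so the difficult ``dictionary'' you propose is not needed in both directions. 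Two further ingredients you treat as free are not: (a) the transversality you invoke (the ``orthogonal analogue'' of Theorem~\ref{thm:MTV}) is itself the main theorem of \cite{Pur-OG}, established by a containment argument of the same type as Theorem~\ref{thm:LGcircle}; and (b) since $\Wr(x;z)$ is a square, $\bolda$ is always a genuine multiset, so the correspondence $T \mapsto x_T$ is not a priori injective --- to know that distinct tableaux in $\DST'(\Rect)$ give distinct points, and to see why ``$k'$ left of $k$'' (horizontal rather than vertical dominoes along the diagonal) is exactly the surviving condition, you need the dual-equivalence criterion of Theorem~\ref{thm:correspondence}(ii) applied to the two-element blocks $\{k',k\}$, which your sketch never engages.
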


Theorem~\ref{thm:OG} has a number of consequences, which are discussed
in~\cite{Pur-OG, Pur-shifted} and also briefly 
in~\cite[Remark 1.12]{Pur-ribbon}.  
The main example is a geometric proof of the of the Littlewood-Richardson 
rule for $H^*(\OG)$, using properties of the 
correspondence $x \mapsto x_T$.

In this paper we exhibit an embedding of 
the Lagrangian Grassmannian that interacts nicely with the Wronski map,
and explore the consequences.
However, unlike the situation with 
$\OG$, the embedding of $\LG$ is far from obvious. 
The author is not aware of any simple properties that would lead one to 
consider it, or any clear geometric reason why such a nice embedding 
should exist.  The clues for its existence come from the surprising 
combinatorial properties of $\DST(\Rect)$ and $\AST(\Rect)$.  Our main
result in this paper is essentially an analogue of Theorem~\ref{thm:OG} 
for the Lagrangian Grassmannian.

\begin{theorem}
\label{thm:main}
There is a subvariety $\Omega \subset X$, isomorphic to the Lagrangian
Grassmannian $\LG$, with the following properties.
\begin{packedenumi}
\item If $x \in \Omega$ then $\Wr(x;z)$ is an even polynomial (i.e.
$\Wr(x;z) = \Wr(x;-z)$).
\item Suppose $\Wr(x;z)$ is an even polynomial whose roots lie on
a circle in $\CP^1$ that passes through $0$ and $\infty$. 
Then $x \in \Omega$ if and only if $x = x_T$ for some $T \in \DST(\Rect)$.
\item Suppose $\Wr(x;z)$ is an even polynomial whose roots lie
on a circle in $\CP^1$ that does not pass through $0$ and $\infty$. 
Then $x \in \Omega$ if and only if $x = x_T$ for some $T \in \AST(\Rect)$.
\end{packedenumi}
\end{theorem}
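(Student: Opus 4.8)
The plan is to construct $\Omega$ explicitly and then analyze its interaction with the Wronski map along the same lines as Theorem~\ref{thm:OG}, but with a twist: rather than a bilinear form whose isotropic planes cut out a subvariety directly, I expect $\Omega$ to be defined by a more subtle algebraic condition forced by the combinatorics of folding. The first step is to pin down the right ambient structure. I would look for a symplectic form $\omega(\cdot,\cdot)$ on a suitable $2n$-dimensional subquotient of $\pol{2n}$, compatible with M\"obius equivariance and with the standard flag, and then define $\Omega$ as (the closure of) the locus of $x \in X$ whose associated plane, under an appropriate ``shift'' or ``derivative'' construction, is Lagrangian for $\omega$. The key design constraint is property~(i): the even-polynomial condition $\Wr(x;z)=\Wr(x;-z)$ must hold identically on $\Omega$. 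So I would reverse-engineer $\Omega$ from this: the substitution $z \mapsto -z$ is an involution of $\CP^1$ fixing $0$ and $\infty$, it acts on $X$ and on $\Ppol$, and $\Wr$ is equivariant for it; the fixed-point behaviour of this involution on fibres of $\Wr$ is exactly what produces even Wronskians. The variety $\Omega$ should be a component of a fixed locus (or a twisted version thereof) of this involution's action, reconciled with the Lagrangian condition so that $\dim \Omega = \dim \LG = \binom{n+1}{2}$.

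With $\Omega$ in hand, part~(i) should follow formally from the equivariance of $\Wr$ under $z \mapsto -z$ together with the defining property of $\Omega$. Parts~(ii) and~(iii) are the substantive content, and here I would mirror the strategy of \cite{Pur-OG, Pur-shifted, Pur-Gr}. Fix an even polynomial $h(z)$ whose roots lie on a circle $C \subset \CP^1$; the MTV-type theorem gives the correspondence $T \mapsto x_T$ from $\SYT(\Rect)$ onto $\Wr^{-1}(h(z))$, and I need to identify which tableaux $T$ produce points lying on $\Omega$. The two cases are distinguished by whether $C$ passes through $\{0,\infty\}$. Geometrically, the involution $z\mapsto-z$ either preserves $C$ as a circle through its two fixed points (case~(ii)) or as a circle disjoint from them (case~(iii)); in the first case it acts on $C$ by a reflection, in the second by a free involution (a half-rotation). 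These two combinatorial incarnations of the $\ZZ/2$-action on the marked points are precisely what give rise to the diagonal symmetry \eqref{eqn:DSTorder} versus the antidiagonal symmetry \eqref{eqn:ASTorder}: a reflection of the circle corresponds to the order $1' < 1 < 2' < 2 < \cdots$ with a folding point, while a free half-rotation corresponds to $1 < 2 < \cdots < M < M' < \cdots < 2' < 1'$. I would make this rigorous by tracking how the involution permutes the Schubert conditions imposed at the roots of $h(z)$ and invoking the dictionary between such intersections and the tableau labelling from \cite{Pur-Gr}.

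The concrete mechanism for parts~(ii) and~(iii) is a symmetry-counting / transversality argument: one shows $x_T \in \Omega$ if and only if $x_T$ is fixed (in the appropriate twisted sense) by the involution, then translates ``$x_T$ fixed'' into a symmetry of the tableau $T$ via the correspondence's equivariance properties. For case~(ii) this symmetry is exactly the diagonal symmetry defining $\DST(\Rect)$; for case~(iii) it is the antidiagonal symmetry defining $\AST(\Rect)$. To get ``if and only if'' rather than just one inclusion, I would argue by a dimension/degree count: the number of symmetric tableaux must equal the number of points of $\Omega \cap \Wr^{-1}(h(z))$, which can be computed as an intersection number in $H^*(X)$ restricted to $\Omega \cong \LG$ — and this is where the known enumeration $|\DST(\Rect)| = |\AST(\Rect)| = |\SYT(\shiftedstair)|$ (equivalently the degree of $\Wr|_\Omega$) closes the loop.

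\textbf{Main obstacle.} The hard part will be the explicit construction of $\Omega$ and verifying that it is isomorphic to $\LG$ with the stated Wronski behaviour — the excerpt itself flags that the embedding is ``far from obvious'' and has no evident geometric motivation. Concretely, the difficulty is finding the correct algebraic recipe (likely involving a derivative or residue pairing on polynomials, twisted by the $z\mapsto-z$ involution) that simultaneously (a) produces a variety of the right dimension, (b) forces even Wronskians, and (c) has its special fibres labelled by symmetric tableaux under the MTV correspondence. Once the right definition is isolated, parts~(i)--(iii) should proceed by the equivariance-and-counting template above, but identifying that definition — and proving it gives a copy of $\LG$ rather than something larger or singular — is the crux.
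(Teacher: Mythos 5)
Your proposal defers exactly the step that carries the paper's content: the construction of $\Omega$. Moreover, the two heuristics you offer for finding it point the wrong way. First, no M\"obius-equivariant recipe can work: the target condition ``$\Wr(x;z)$ is even'' is preserved only by $\Ogroup_2(\CC) \subset \PGL_2(\CC)$, and the paper's construction is correspondingly non-equivariant --- $\Omega$ is the locus $x = x^\perp$ inside $\Gr(n,\VV)$, where $\VV = \{f \in \pol{2n} \mid [z^n]f(z) = 0\}$ is a specific hyperplane and $[\cdot,\cdot]$ is a specific symplectic form written in coefficients; the only ``fixed locus'' in sight is for the map $x \mapsto x^\perp$, which presupposes having found $\VV$ and the form. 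Second, $\Omega$ is not a component (nor a subvariety) of the fixed locus of $z \mapsto -z$ on $X$: that fixed locus is the union over $k$ of $\Gr(k,E) \times \Gr(n-k,O)$, with $E$, $O$ the even and odd polynomials, and none of these is isomorphic to the Lagrangian Grassmannian. Already for $n=2$ the three-dimensional fixed component is $\PP^2 \times \PP^1$, whereas $\mathrm{LG}(2)$ is a quadric threefold; since $\Omega$ is irreducible of dimension $3$, it cannot lie in the fixed locus at all. Relatedly, part (i) is not a formal equivariance statement in the paper: it rests on the computation that $\redF_\bullet(a) \perp \redF_\bullet(-a)$ (Lemma~\ref{lem:orthogonalflags}), which forces the Schubert shapes of $x \in \Omega$ at $a$ and $-a$ to be conjugate and the shapes at $0,\infty$ to be doubles of strict partitions (Lemma~\ref{lem:conjugates}); evenness of the Wronskian is read off from these shapes.

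The mechanism you propose for (ii) and (iii) --- ``$x_T \in \Omega$ iff $x_T$ is fixed (in a twisted sense) by the involution,'' then close with a count --- fails quantitatively, so the unspecified twist would have to carry all the weight. For $n=2$ and a generic even fibre in case (iii), the involution $z \mapsto -z$ acts on the fibre as the $M$\nth power of promotion, whose fixed points number $3$ (the standard domino tableaux of $\Rect$), while $|\DST(\Rect)| = |\AST(\Rect)| = 2$; so the fixed-point set and $\Omega \cap X(\bolda)$ cannot coincide. What the paper actually does is: (a) prove transversality of $\Omega$-intersections (Theorem~\ref{thm:LGcircle}) by embedding $\Omega_\kappa(a)$ in the MTV intersection $X_\kappa(a) \cap X_{\Tkappa}(-a)$; (b) compute $\int_\Omega [\Omega_\one]^M = |\DST(\Rect)| = |\AST(\Rect)|$ via the Chevalley formula and the restriction $\imath^*$ (Lemma~\ref{lem:fibrecount}); and (c) identify which tableaux occur by degenerating the roots in symmetric pairs toward $0$ (case (ii)) or toward $\pm\gamma$ (case (iii)) along paths that stay in $\Omega$, then reading off from Lemma~\ref{lem:conjugates} and Theorem~\ref{thm:correspondence}(i) that every partial shape $T_{[1,2k]}$ is a doubled strict partition, respectively that complementary partial shapes are conjugate --- which is precisely the diagonal, respectively antidiagonal, symmetry. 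Your counting step is in the right spirit and matches (b), but without the explicit $\Omega$, the transversality input, and the shape-tracking degeneration, the proposal does not yet constitute a proof.
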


If a polynomial is even and its roots lie on a circle, then
this circle must be symmetrical under $z \mapsto -z$; hence it is
is either a rotation of $\RP^1$ (which passes through $0$ and $\infty$), 
or a dilatation the unit circle (which passes through neither).  
It is important to remark
that parts (ii) and (iii) of Theorem~\ref{thm:main}
use different conventions:
the definition of $x_T$ depends on the circle under consideration, and
the conventions are \emph{not} consistent with each other
in the case where the roots of $\Wr(x;z)$ lie on the intersection 
of two different circles.  
%The different in conventions that are used
%for each type of circle are explained in Section~\ref{sec:background}.

Putting Theorem~\ref{thm:main}(ii) and (iii) together gives one explaination
for the identity $|\DST(\Rect)| = |\AST(\Rect)|$: both are equal to the 
degree of the Wronski map restricted to $\Omega$.  
In Section~\ref{sec:embedding} we will see how this can be 
viewed as a refinement of the cohomological argument, which is a 
statement about the map $H^*(X) \to H^*(\LG)$ induced by the inclusion 
$\LG \hookrightarrow X$.
Moreover, in Section~\ref{sec:interpolation}
we will see how the bijection between $\AST(\Rect)$ and $\DST(\Rect)$ 
is a further refinement of these arguments:
folding arises as a geometric interpolation between cases (ii) and (iii).  
The properties of this interpolation, 
are laid out in Proposition~\ref{prop:foldinginterpolation}. 
Using these, we also obtain a proof of the branching rule in Schubert 
calculus for the map $H^*(X) \otimes H^*(\LG) \to H^*(\LG)$
(Theorem~\ref{thm:branching}).
%The formulation the branching rule we obtain is not the most
%common one, but rather and equivalent one conjectured by Shor, and 
%proved by Haiman~\cite{Hai-mixed}.

It is interesting to note that the 
consequences of Theorem~\ref{thm:main} are not perfectly analogous
to the consequences of Theorem~\ref{thm:OG}.  Instead of a 
Littlewood-Richardson rule, we obtain a branching rule.  The embedding
of $\OG$ resolved a conjecture of Sottile \cite{Pur-OG}, whereas the 
embedding in this paper leads to an analogue (Theorem~\ref{thm:LGcircle})
that is not equivalent to
the corresponding conjecture for $\LG$ \cite{Sot-Real}.
The differences can be attributed to the fact that the condition 
``$\Wr(x;z)$ is a square'' is 
preserved by all M\"obius transformations, whereas the condition 
``$\Wr(x;z)$ is even'' is only preserved by the subgroup 
$\Ogroup_2(\CC)$.  
The smaller symmetry group, on the one 
hand, means that some of the old arguments no longer work; on the 
other hand, the fact that the points $0, \infty$ in $\CP^1$ are 
special opens up other possibilities.  
%The branching rule is
Theorem~\ref{thm:branching} is
what emerges naturally from this situation.

The rest of this paper is organized as follows.  In 
Section~\ref{sec:background} we give an overview of the definitions 
and properties of the Wronski map and related results; 
we define the correspondence $T \mapsto x_T$, with an focus on 
the different conventions that are used in parts (ii) and (iii) of 
Theorem~\ref{thm:main}.  Section~\ref{sec:embedding} is all about
the Lagrangian Grassmannian: we construct the embedding $\Omega$
and prove several facts about it, including Theorem~\ref{thm:main}.
In Section~\ref{sec:interpolation}, we discuss our main applications of
Theorem~\ref{thm:main}: the bijection between
$\AST(\Rect)$ and $\DST(\Rect)$, and the branching rule in Schubert
calculus.  We conclude, in Section~\ref{sec:conclusion},
with a miscellany of unresolved questions and other points of 
interest, such as the defining equations of $\Omega$ as a projective
scheme, and the orbit 
structure of promotion on staircase tableaux.

%\subsection*{Acknowledgements}
%\comment{acknowledgements}

%%%%%%%%%%%%%%%%%%%%%%%%%%%%%%%%%%%%%%%%%%%%%%%%%%%%%%%%%%%%%%%%%%%%%%
%%%%%%%%%%%%%%%%%%%%%%%%%%%%%%%%%%%%%%%%%%%%%%%%%%%%%%%%%%%%%%%%%%%%%%

\section{Background}
\label{sec:background}

The existence of the correspondence $T \mapsto x_T$ can be attributed
to the remarkable properties of the Wronski map.  In this section,
we recall the relevant definitions and theorems, including the
construction of the correspondence.  
In addition, we will examine the significance of some different
conventions that can be used in defining correspondence; 
in particular, we will see how the combinatorial operation of folding arises 
geometrically, through a change of conventions.
We will assume familiarity with some of the definitions
from tableau combinatorics that arise in the Schubert calculus of
the Grassmannian, including the jeu de taquin \cite{Sch}, 
and the dual equivalence relation \cite{Hai-dual}. 
For the most part, this section follows the exposition 
in \cite{Pur-shifted, Pur-ribbon},
and we refer the reader to these papers for more detail.  
Further discussion of the Wronski map and its properties
can be found in the survey article~\cite{Sot-F}.
%\comment{other references for this material -- \cite{Sot-F} for Wronski
%map, papers on tableaux that talk about dual equivalence}

The \defn{Wronski map} \eqref{eqn:wronski} is defined as follows.
If $x \in X$ is the $n$-plane spanned by polynomials 
$f_1(z), \dots, f_n(z)$, let
\[
   \Wr(x;z) :=
   \begin{vmatrix}
   f_1(z) & \cdots & f_n(z)\\
   f_1'(z) & \cdots  & f_n'(z) \\
   \vdots &  \vdots & \vdots \\
   f_1^{(n-1)}(z) & \cdots & f_n^{(n-1)}(z)
   \end{vmatrix}\,.
\]
It is not hard to see that this a non-zero polynomial of degree at 
most $2M$, which up to a scalar multiple depends only on $x$; hence
$\Wr(x;z)$ is well-defined as an element of $\Ppol$.  
If $\bolda = \{a_1, \dots, a_{2M}\}$ is a multiset of points in
$\CP^1$, we let $X(\bolda) := \Wr^{-1}(h(z))$ denote the fibre of 
the Wronski map at 
the polynomial $h(z) = \prod_{a_k \neq \infty} (z+a_k)$.

The group $\SL_2(\CC)$ acts on each vector space
$\pol{m}$ by M\"obius transformations:
If $\phi = 
\left(\begin{smallmatrix} 
\phi_{11} & \phi_{12} \\ \phi_{21} & \phi_{22}
\end{smallmatrix}\right) \in \SL_2(\CC)$,
we let
\[
\phi f(z) := (\phi_{21} z + \phi_{11})^m 
f\Big(\frac{\phi_{22} z + \phi_{12}}{\phi_{21} z + \phi_{11}}\Big)
\]
for $f(z) \in \pol{m}$.  This induces an action of
of $\PGL_2(\CC)$ on $X$, and on $\Ppol$, and the Wronski map
is $\PGL_2(\CC)$-equivariant.  The action of $\PGL_2(\CC)$ on 
$\CP^1$ is inverse to this: for $a \in \CP^1$,
\[
  \phi(a) := \frac{\phi_{11} a + \phi_{12}}{\phi_{21} a + \phi_{22}}\,.
\]
With these conventions, $\phi(X(\bolda)) = X(\phi(\bolda))$.

For each $a \in \CP^1$, define a full flag in $\pol{2n}$,
\[
  F_\bullet(a) \ :\ 
  \{0\} \subset F_1(a) \subset \dots \subset F_{2n}(a) \subset \pol{2n}\,.
\]
For $a \in \CC$, $F_i(a) := (z+a)^{2n+1-i}\CC[z] \cap \pol{2n}$
is the set of
polynomials in $\pol{2n}$ divisible by $(z+a)^{2n+1-i}$.
We also set
$F_i(\infty) := \pol{i-1} = \lim_{a \to \infty} F_i(a)$.  
We note that $\phi(F_\bullet(a)) = F_\bullet(\phi(a))$ for 
$\phi \in \PGL_2(\CC)$.

Let $\Lambda$ denote the set of all partitions
$\lambda : \lambda^1 \geq \lambda^2 \geq \dots \geq \lambda^n \geq 0$,
with at most $n$ parts and $\lambda^1 \leq n+1$. 
We will represent certain partitions pictorially: for example,
$\Rect$ denotes the largest partition in $\Lambda$; $\one$ is
the partition with a single box; and $\stair$ is the partition
$n \geq n{-}1 \geq \dots \geq 2 \geq 1$.
Write $\mu \subseteq \lambda$ if $\mu^i \leq \lambda^i$ for all $i$.
The complementary
partition to $\lambda$ inside $\Rect$ is denoted $\lambda^\vee : 
n+1-\lambda^{n} \geq \dots \geq n+1-\lambda^1$.
The set of standard Young tableaux of shape $\lambda$ is denoted
$\SYT(\lambda)$.

For $\lambda \in \Lambda$, let 
$J(\lambda) := \{i-1+\lambda^{n+1-i} \mid 1 \leq i \leq n\}$.
The \defn{Schubert cell} associated to $\lambda$ relative to the flag
$F_\bullet(a)$ is
\[
  X^\circ_\lambda(a) := \big\{ x \in X \bigmid 
   \dim (x \cap F_{2n+1-i}(a))  - \dim (x \cap F_{2n-i}(a))  = 
        \delta_{i \in J(\lambda)},
   \text{ for $0 \leq i \leq 2n$} \big\}\,.
\]
where $\delta_{i \in J(\lambda)} = 1$ if $i \in J(\lambda)$ and $0$ otherwise.
Its closure is the \defn{Schubert variety}
\[
  X_\lambda(a) := \big\{ x \in X \bigmid 
   \dim (x  \cap  F_{i+1}(a))  \geq \lambda_{n+1-\lambda^i+i},
    \text{ for $i=1, \dots, n$} \big\}\,.
\]
These conventions are such that $|\lambda|$ is the codimension of 
$X_\lambda(a)$ in $X$.
The \defn{Schubert class} $[X_\lambda] \in H^*(X)$ is the cohomology 
class defined by $X_\lambda(a)$ for any $a \in \CP^1$.
The following lemma relates the Schubert varieties to the Wronski map.

\begin{lemma}
\label{lem:schubertwronskian}
%For $x \in X$ and $k \geq 0$:
%\begin{packedenumi}
%\item
For $a \in \CP^1$,
$(z+a)^k$  divides $\Wr(x;z)$ 
if and only if
$x \in X_\lambda(a)$ for some partition $\lambda \vdash k$.  
If $(z+a)^k$ is
the largest power of $(z+a)$ that divides $\Wr(x;z)$, then 
$x \in X^\circ_\lambda(a)$.
%\item
%$\deg(\Wr(x;z)) \leq 2M-k$ if and only if $x \in X_\lambda(\infty)$ for
%some $\lambda \vdash k$.
%If $\deg(\Wr(x;z)) = 2M-k$, then $x \in X^\circ_\lambda(\infty)$.
%\end{packedenumi}
(Note: The condition $(z+ \infty)^k$ divides $\Wr(x;z)$ is
interpreted to mean $\deg(\Wr(x;z)) \leq 2M-k$.)
\end{lemma}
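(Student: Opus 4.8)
The plan is to reduce everything to a statement about vanishing orders of polynomials at a single point, and then translate between "$x$ meets the flag $F_\bullet(a)$ in prescribed dimensions'' and "$(z+a)^k$ divides the Wronskian''. First I would fix $a \in \CC$ and work with a basis $f_1(z), \dots, f_n(z)$ of $x$ chosen to be in echelon form with respect to the filtration by order of vanishing at $-a$: that is, after row-reducing, the orders $\ord_{-a}(f_1) < \ord_{-a}(f_2) < \dots < \ord_{-a}(f_n)$ are strictly increasing, and these orders are precisely the jump indices recording where $\dim(x \cap F_{2n+1-i}(a))$ increases. By the definition of $F_i(a)$ as the polynomials divisible by $(z+a)^{2n+1-i}$, this set of jump indices is exactly $J(\lambda)$ for the unique $\lambda \in \Lambda$ with $x \in X^\circ_\lambda(a)$ — this is just unwinding the definition of the Schubert cell.

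Next I would compute the order of vanishing of $\Wr(x;z)$ at $z = -a$ directly from this echelon basis. Since the Wronskian is multilinear and alternating in the columns, $\Wr(x;z)$ is (up to a nonzero scalar from the row reduction) the Wronskian of $f_1, \dots, f_n$; expanding each $f_j(z) = c_j (z+a)^{d_j} + (\text{higher order})$ with $d_j = \ord_{-a}(f_j)$ and $c_j \neq 0$, the lowest-order term of the determinant is the Wronskian of the monomials $(z+a)^{d_1}, \dots, (z+a)^{d_n}$, which vanishes to order exactly $\sum_j d_j - \binom{n}{2} = \sum_j (d_j - (j-1))$. Crucially the leading coefficient here is a Vandermonde-type determinant in the $d_j$, hence nonzero because the $d_j$ are distinct; so no cancellation occurs and this is the \emph{exact} order of vanishing. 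Now $\sum_j(d_j - (j-1)) = |\lambda|$ by the dictionary between $J(\lambda)$ and the parts of $\lambda$ (writing $d_j = (j-1) + \lambda^{n+1-j}$ after reindexing). This gives: the exact power of $(z+a)$ dividing $\Wr(x;z)$ is $(z+a)^{|\lambda|}$ where $x \in X^\circ_\lambda(a)$. The "largest power'' assertion of the lemma follows immediately, and the "if and only if'' statement follows by taking closures: $(z+a)^k \mid \Wr(x;z)$ iff $x$ lies in the closure of some $X^\circ_\lambda(a)$ with $|\lambda| \geq k$, and since $X_\lambda(a) = \overline{X^\circ_\lambda(a)}$ and $X_\mu(a) \subseteq X_\lambda(a)$ when $\lambda \subseteq \mu$, this is equivalent to $x \in X_\lambda(a)$ for some $\lambda \vdash k$ (pad or truncate $\lambda$ as needed, using that $X_\lambda(a)$ only gets smaller as $\lambda$ grows, so membership in some $X_\mu(a)$ with $|\mu| \geq k$ forces membership in some $X_\lambda(a)$ with $|\lambda| = k$).

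For the case $a = \infty$, I would either repeat the argument with $F_i(\infty) = \pol{i-1}$ (so the relevant filtration is by \emph{degree}, and "order of vanishing at $\infty$'' becomes $2n - \deg$), or simply invoke $\PGL_2(\CC)$-equivariance: apply a M\"obius transformation $\phi$ sending $\infty$ to a finite point, use $\phi(F_\bullet(a)) = F_\bullet(\phi(a))$, $\phi(X_\lambda(a)) = X_\lambda(\phi(a))$, and the equivariance of $\Wr$, and note that the note in parentheses about $\deg(\Wr(x;z)) \leq 2M - k$ is precisely the transported statement, since a root of multiplicity $k$ at $\infty$ means degree drop by $k$ from the maximum $2M$. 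I expect the main obstacle to be purely bookkeeping: getting the index conventions for $J(\lambda)$, the flag $F_\bullet(a)$, and the relation $d_j = (j-1) + \lambda^{n+1-j}$ to line up consistently so that the order of vanishing comes out to exactly $|\lambda|$ rather than off by a shift — the non-vanishing of the Vandermonde leading coefficient, which is the one genuinely substantive point, is standard.
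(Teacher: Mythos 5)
Your argument is correct, and in fact the paper offers no proof of this lemma at all: it is recalled as background, with the justification deferred to the cited references (e.g.\ the discussion of the Wronski map in \cite{Pur-Gr} and \cite{Sot-F}), where the standard proof is exactly the one you give. Your computation --- an echelon basis with distinct vanishing orders $d_1<\dots<d_n$ at $-a$, the identification of $\{d_j\}$ with $J(\lambda)$, the generalized Vandermonde leading coefficient showing the Wronskian vanishes to order exactly $\sum_j(d_j-(j-1))=|\lambda|$, and the closure/containment argument plus $\PGL_2(\CC)$-equivariance for $a=\infty$ --- is complete and is the same route as in those sources, so there is nothing to correct.
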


The key fact that allows to define a correspondence between tableaux
and fibres of the Wronski map is the transversality theorem of 
Mukhin, Tarasov and Varchenko.

\begin{theorem}[Mukhin-Tarasov-Varchenko \cite{MTV2}]
\label{thm:MTV}
Let $a_1, \dots, a_K \in \CP^1$ be distinct points that lie on a circle.
Let $\lambda_1, \dots, \lambda_K \in \Lambda$ be partitions
such that $|\lambda_1| + \dots + |\lambda_K| = 2M$.  Then
the intersection of Schubert varieties
\begin{equation}
\label{eqn:MTVintersection}
   X_{\lambda_1}(a_1) \cap \dots \cap X_{\lambda_K}(a_K)
\end{equation}
is finite and transverse, and hence contains exactly
$\int_X [X_{\lambda_1}] \cdots [X_{\lambda_K}]$ many points.
\end{theorem}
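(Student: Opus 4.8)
The plan is to reduce the statement to the case where all the points $a_1, \dots, a_K$ are real, and then invoke the reality/transversality theorem in its original form from \cite{MTV2}. First I would use the $\PGL_2(\CC)$-equivariance of the Wronski map, together with the compatibility $\phi(X_\lambda(a)) = X_\lambda(\phi(a))$ noted after the definition of the Schubert cells: since any circle in $\CP^1$ is the image of $\RP^1$ under some $\phi \in \PGL_2(\CC)$, applying such a $\phi$ carries the intersection \eqref{eqn:MTVintersection} isomorphically onto $X_{\lambda_1}(\phi(a_1)) \cap \dots \cap X_{\lambda_K}(\phi(a_K))$ with all $\phi(a_i) \in \RP^1$. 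Finiteness and transversality are preserved under this isomorphism, so it suffices to treat distinct real points.

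With the points real, the core input is the Mukhin-Tarasov-Varchenko theorem that the Schubert intersection \eqref{eqn:MTVintersection} with osculating flags at real points is transverse (and in fact consists of real points); this is exactly the content of \cite{MTV2}, built on the Bethe ansatz / Gaudin model machinery, and I would cite it rather than reprove it. Transversality forces the intersection to be zero-dimensional (since $\sum |\lambda_i| = 2M = \dim X$), hence finite. Once the intersection is transverse and finite, each point is counted with multiplicity one, so the number of points equals the intersection number $\int_X [X_{\lambda_1}] \cdots [X_{\lambda_K}]$ by the standard fact that a transverse zero-dimensional intersection computes the cup product in cohomology.

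The one genuine subtlety — and the step I would be most careful about — is matching conventions: one must check that the flags $F_\bullet(a)$ defined here via divisibility by powers of $(z+a)$ (with the $a = \infty$ case handled by the limit $F_i(\infty) = \pol{i-1}$) coincide, up to the identification $\pol{2n} \cong \CC^{2n+1}$, with the osculating flags of the rational normal curve used in \cite{MTV2}, and that the indexing of Schubert varieties by partitions $\lambda$ via $J(\lambda)$ agrees on both sides. This is routine but must be done explicitly, since a mismatch would permute the $\lambda_i$ or replace them by complements. After that, the equivariance argument and the cohomological count are formal. Since the paper is in large part expository at this point and refers the reader to \cite{Pur-shifted, Pur-ribbon} for background, I would expect the actual write-up to be brief, essentially: "Apply a M\"obius transformation to reduce to real points, then quote \cite{MTV2}; transversality and the dimension count give the conclusion."
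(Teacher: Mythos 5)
Your proposal is correct and matches the paper's treatment: the paper gives no independent proof of Theorem~\ref{thm:MTV}, but simply quotes it from \cite{MTV2}, with the reduction from an arbitrary circle to $\RP^1$ via $\PGL_2(\CC)$-equivariance of the Wronski map and of the osculating flags being exactly the implicit, standard step. Your added care about matching the flag and partition-indexing conventions with \cite{MTV2} is sensible but does not change the substance.
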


In particular if $\bolda = \{a_1, \dots, a_{2M}\}$ is a set of
(distinct) points that lie on a circle then by 
Lemma~\ref{lem:schubertwronskian},
$X(\bolda) = X_{\one}(a_1) \cap \dots \cap X_\one(a_{2M})$.
By Theorem~\ref{thm:MTV} this is a transverse intersection,
i.e. the fibre $X(\bolda)$ is reduced, and the number of points
in the fibre is
$\int_X [X_\one]^{2M} = |\SYT(\Rect)|$.

We now use this fact to define an actual
map from $\SYT(\Rect)$ to the fibre $X(\bolda)$, when the points of
$\bolda$ lie on a circle.
To begin, suppose the circle in question is $\RP^1$, and $\bolda$ is
a set.  Consider the total order $\preceq$ on $\RP^1$,
defined by $a\preceq b$, if either $a=b$, $|a| < |b|$, or
$0 < a = -b$.  We may assume, without
loss of generality, that
%\[
   $a_1 \prec a_2 \prec \dots \prec a_{2M}$.
%\]
The definition that follows is not the simplest, but it is the
one that we will need in Section~\ref{sec:interpolation}.
Equivalent, alternative definitions are 
given in~\cite{Pur-Gr, Pur-ribbon}.

\begin{definition}
\label{def:correspondence}
Let $x \in X(\bolda)$, with $\bolda$ as above.
First, we define a sequence of partitions 
$\lambda_k \in \Lambda$, $k=0, \dots, 2M$.
For $t \in [0,1]$, let
%\begin{equation}
%\label{eqn:tableaudefpath}
\[
 \bolda_{k,t} := 
\begin{cases}
\ \{ta_1, ta_2, \dots, ta_k, a_{k+1}, a_{k+2}, \dots, a_{2M}\}
        &\quad\text{if $k \leq M$} \\
\  \{a_1, a_2, \dots, a_k, t^{-1}a_{k+1}, t^{-1}a_{k+2}, \dots, t^{-1}a_{2M}\}
        &\quad\text{if $k \geq M+1$}. 
\end{cases}
\]
If $t \in (0,1]$, 
%$\bolda_{k,t}$ is a set of points on the circle
%$\RP^1$, and hence 
the fibre $X(\bolda_{k,t})$ is reduced.
%It follows that given a point $x \in X(\bolda)$, 
Therefore, there is unique (continuous) lifting of the path
$\bolda_{k,t}$, $t \in [0,1]$ to a path $x_{k,t} \in X(\bolda_{k,t})$,
with $x_{k,1} = x$.  Now, consider the point $x_{k,0}$.
If $k \leq M$, then $0$ appears $k$ times 
in the multiset $\bolda_{k,0}$, i.e. $z^k$ divides $\Wr(x_{k,0}; z)$;
by Lemma~\ref{lem:schubertwronskian}, 
$x_{k,0} \in X^\circ_{\lambda_k}(0)$ for some partition 
$\lambda_k \vdash k$.  
Similarly, if $k \geq M+1$, then $\infty$ appears $2M-k$ times in the 
multiset $\bolda_{k,0}$, which implies that
%hence $\deg(\Wr(x_{k,0};z)) = k$, and so 
%and so by Lemma~\ref{lem:schubertwronskian}, 
$x_{k,0} \in X^\circ_{\lambda_k^\vee}(\infty)$ for some partition 
$\lambda_k \vdash k$.

The correspondence is obtained by encoding this sequence of partitions
into a tableau.
For $T \in \SYT(\Rect)$ let $T_{[i,j]}$ denote 
the subtableau of $T$ formed by entries $\{i,i+1, \dots, j\}$.
We say $x$ \defn{corresponds} to $T$ and write 
$x_T := x$ if $T_{[1,k]}$ has shape $\lambda_k$ for all $k$.
When there is a need to emphasize the fibre $X(\bolda)$, we
will use the notation $x_T(\bolda) := x$.  
%In~\cite{Pur-Gr}, we showed that
%this defines a bijection
%between $\SYT(\Rect)$ and the points of $X(\bolda)$.
\end{definition}

It is not immediately apparent that Definition~\ref{def:correspondence}
defines a function, let alone a bijection
between $\SYT(\Rect)$ and $X(\bolda)$.
The fact that both are true was proved by Eremenko and 
Gabrielov~\cite{EG-deg}.

The definition of $x_T(\bolda)$ 
%Definition~\ref{def:correspondence}
can be extended to the case where 
$\bolda$ is a multiset of points on $\RP^1$.  
In this paper, we will employ two different conventions for doing this.
The first is the convention used in~\cite{Pur-shifted, Pur-ribbon},
and it is the one we use in part (iii) of Theorem~\ref{thm:main}.
\begin{convention}
\label{convention:general}
Let $\calA$ be the set of
all multisets 
$\bolda = \{a_1, \dots, a_{2M}\}$, with $a_1, \dots, a_{2M} \in \RP^1$.
Define a \defn{$\preceq$-zone} of $\calA$, to be a subset of the form
\[
 \big\{
 \{a_1 \preceq a_2 \preceq \dots \preceq a_{2M}\} \in \calA 
 \ \big|\  0 \leq \epsilon_k a_k \leq \infty
 \text{ for $k=1, \dots, 2M)$}
 \big\}\,,
\]
where $\epsilon_1, \dots, \epsilon_{2M} \in \{ \pm 1\}$.
%Each multiset $\bolda$ is in a unique $\preceq$-zone, and 
Each $\preceq$-zone is simply connected.  We define $x_T(\bolda)$ by 
extending the correspondence continuously on any $\preceq$-zone.
\end{convention}
When $\bolda \in \calA$ is in more than one $\preceq$-zone, 
the extensions are 
compatible, so Convention~\ref{convention:general} defines 
$x_T(\bolda)$ uniquely.
Since the correspondence is bijective when $\bolda$ is a set,
it is surjective for all $\bolda \in \calA$.
In addition, using Convention~\ref{convention:general}, the correspondence 
has the following properties:
  
\begin{theorem}[{See \cite[Theorem 4]{Pur-shifted}}]
\label{thm:correspondence}
%The correspondence $T \mapsto x_T(\bolda) \in X(\bolda)$, 
%has the following properties.
Let $\bolda = \{a_1, \dots, a_{2M}\} \in \calA$, with
$a_1 \preceq a_2 \preceq \dots \preceq a_{2M}$.
Suppose that $a_i = a_{i+1} = \dots = a_j$, and $a_k \neq a_i$ for
$k < i$ or $k > j$.
\begin{packedenumi}
%\item[(i)] For all $T \in \SYT(\Rect)$ and $\bolda \in A$,
%$x_T(\bolda) \in X(\bolda)$.
%\item For all $\bolda \in A$, the map $T \mapsto x_T(\bolda)$ is
%surjective onto the fibre $X(\bolda)$.
%If $\bolda$ is a set, 
%%i.e. $a_i \neq a_j$ for all $i \neq j$,
%then it is also one to one.
%\item For any $T \in \SYT(\Rect)$, the map $\bolda \mapsto
%x_T(\bolda)$ is discontinuous at $\bolda$ only if
%$a_i = -a_j \notin \{0, \infty\}$ for some $i,j \in \{1, \dots, n(n{+}1)\}$.
%More specifically, it is continuous on any $\preceq$-zone of $A$.
\item
For $T \in \SYT(\Rect)$,
$x_T(\bolda) \in X^\circ_\lambda(a_i)$ where $\lambda$ is
the rectification shape of $T_{[i,j]}$.
\item  Let
$T, T' \in \SYT(\Rect)$ be two tableaux such that
$T_{[1,i-1]} = T'_{[1,i-1]}$,
$T_{[j+1,2M]} = T'_{[j+1,2M]}$.
Then $x_T(\bolda) = x_{T'}(\bolda)$ if and only if
 $T_{[i,j]}$ is dual equivalent to $T'_{[i,j]}$. 
\end{packedenumi}
\end{theorem}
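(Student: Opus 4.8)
Both parts are best analyzed by resolving the coincident points. Since $\bolda$ lies in the closure of a $\preceq$-zone whose interior consists of multisets with $a_i \prec a_{i+1} \prec \dots \prec a_j$ distinct (and all remaining entries unchanged), Convention~\ref{convention:general} gives $x_T(\bolda) = \lim x_T(\bolda')$ as $\bolda' \to \bolda$ through that zone, and I expect the limit to be governed by jeu de taquin. The ``easy half'' of (i) is immediate: for $x \in X(\bolda)$ we have $\Wr(x;z) = \prod_k (z+a_k)$, which is divisible by $(z+a_i)^{j-i+1}$ and by no higher power of $(z+a_i)$, so Lemma~\ref{lem:schubertwronskian} places $x$ in $X^\circ_\lambda(a_i)$ for some $\lambda \vdash j-i+1$. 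The content of (i) is the identification of this $\lambda$ with the rectification shape of the skew standard tableau $T_{[i,j]}$, which in the notation of Definition~\ref{def:correspondence} has shape $\lambda_j/\lambda_{i-1}$.

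For this identification the plan is to merge the points of the block one at a time. Let $\bolda^{(m)}$ be obtained from a distinct perturbation of $\bolda$ by collapsing the first $m+1$ points of the block to a common value while leaving $a_{i+m+1},\dots,a_j$ and all points outside the block distinct and fixed, so that $\bolda^{(j-i)} = \bolda$. I would show that passing from $\bolda^{(m)}$ to $\bolda^{(m+1)}$ corresponds, at the level of the shapes recorded in Definition~\ref{def:correspondence}, to an inward jeu de taquin slide absorbing the box labelled $i+m+1$ into the growing cluster: along the whole degeneration path Theorem~\ref{thm:MTV} keeps the fibres reduced and the lift continuous, and Lemma~\ref{lem:schubertwronskian} identifies the Schubert cell of each limit. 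Iterating $j-i$ times rectifies $T_{[i,j]}$ and produces the shape $\lambda$, while confluence of jeu de taquin makes the order of merging irrelevant. This single-merge claim --- that one coincidence of adjacent (in $\preceq$) points implements exactly one jeu de taquin slide --- is the technical heart of the argument; after normalizing by a M\"obius transformation fixing the relevant points it reduces to a small explicit Schubert problem whose two nearby solutions have a common, combinatorially recognizable limit. This is the Grassmannian counterpart of the monodromy and jeu de taquin results of \cite{Pur-Gr}, and is where I expect the real work to lie.

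Part (ii) then follows from part (i) together with Haiman's characterization of dual equivalence as the relation generated by agreement of shapes under all jeu de taquin slides. For the ``if'' direction it suffices to treat a single elementary dual equivalence $T \to T'$ supported inside $[i,j]$, which replaces one intermediate shape $\lambda_k$ (some $i < k < j$) by the unique alternative $\lambda_k'$ with $\lambda_{k-1} \subseteq \lambda_k' \subseteq \lambda_{k+1}$: since on the merged configuration $x_T(\bolda)$ arises as a limit of the bijective correspondence on nearby distinct configurations, and an elementary dual equivalence changes none of the jeu de taquin slides produced by the merging process of the previous paragraph, one obtains $x_T(\bolda) = x_{T'}(\bolda)$, and this propagates back along the $\preceq$-zone. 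For the ``only if'' direction, if $T_{[i,j]}$ and $T'_{[i,j]}$ are not dual equivalent then Haiman's theorem supplies a sequence of jeu de taquin slides after which they have different shapes; realizing this sequence as a degeneration of $\bolda$ within $\calA$ (transverse away from the limit, by Theorem~\ref{thm:MTV}) exhibits $x_T(\bolda)$ and $x_{T'}(\bolda)$ in distinct Schubert cells by part (i), so in particular $x_T(\bolda) \neq x_{T'}(\bolda)$. Granting the single-merge claim, everything else is a careful but routine iteration using the standard properties of jeu de taquin and dual equivalence.
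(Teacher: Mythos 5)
First, note that the paper does not prove Theorem~\ref{thm:correspondence} at all: it is imported verbatim from \cite{Pur-shifted} (Theorem~4), whose proof rests on the jeu de taquin/monodromy machinery developed in \cite{Pur-Gr}. Your outline reconstructs the general shape of that machinery, but the step you yourself flag as ``the technical heart'' --- the single-merge claim, i.e.\ that collapsing one more point of the block changes the recorded shape by exactly one jeu de taquin slide, so that the limiting cell is the rectification shape --- is not proved, and it essentially \emph{is} the theorem; everything else in your argument is routine bookkeeping around it. The proposed reduction to ``a small explicit Schubert problem whose two nearby solutions have a common limit'' does not capture the difficulty: at the merged configuration the fibre is in general non-reduced, many sheets of the covering collide simultaneously, and deciding which sheets collide and into which Schubert cell the limits fall is a global monodromy/counting problem (this is the content of \cite{Pur-Gr}), not a local computation with two solutions.

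Part (ii) has concrete gaps in both directions. For ``if'': even granting (i), the fact that dual equivalent subtableaux produce identical shapes under every slide sequence does not by itself force the limit \emph{points} to coincide --- two sheets could stay in the same cells at every stage and still remain distinct. The cited proof closes this by counting: the number of distinct points of the degenerate fibre equals the relevant intersection number (Theorem~\ref{thm:MTV} applied to the merged configuration), which equals the number of dual equivalence classes by Haiman's theory, so surjectivity forces exactly one point per class; nothing in your proposal plays this role. Moreover, your description of an elementary dual equivalence as an arbitrary replacement of one intermediate shape $\lambda_k$ by the unique alternative between $\lambda_{k-1}$ and $\lambda_{k+1}$ is wrong: the two standard tableaux of shape $(2,1)/(1)$ differ by exactly such a swap but are not dual equivalent (their rectification shapes are $(2)$ and $(1,1)$), so the base case of your induction already proves false statements. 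For ``only if'': non-dual-equivalent tableaux of the same skew shape can share a rectification shape (there are $c^{\lambda}_{\mu\nu}$ classes rectifying to $\nu$), so the slide sequences needed to separate them are generally not realizable by merging/scaling degenerations inside a single $\preceq$-zone; they correspond to monodromy moves that cross zone boundaries, precisely where Convention~\ref{convention:general} is discontinuous and where the hard analysis of \cite{Pur-Gr} is required.
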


The second convention for extending the definition of $x_T(\bolda)$ 
is the one used part (ii) of Theorem~\ref{thm:main},
and it is only applicable in the case where $\bolda$ is a multiset of points
invariant under the transformation $z \mapsto -z$, i.e. when
$\bolda = -\bolda$.
\begin{convention}
\label{convention:even}
Let $\redA := \{\bolda \in \calA \mid \bolda = -\bolda \text\}$.
Since each component of $\redA$ is simply connected, we 
define $x_T(\bolda)$ 
for all $\bolda \in \redA$ by extending the correspondence 
continuously to all of $\redA$.  
\end{convention}
Again, with this convention, the correspondence
is surjective for all $\bolda \in \redA$, but it does not enjoy the
more exciting properties stated in Theorem~\ref{thm:correspondence}.
Nevertheless, we will use 
Convention~\ref{convention:even} whenever possible.
For most of our purposes, the gain of continuity outweighs the loss of 
Theorem~\ref{thm:correspondence}, and 
when we need the latter, it is possible to
switch from one convention to the other.
Given $T \in \SYT(\Rect)$ with entries as in~\eqref{eqn:DSTorder}, and 
$\bolda = \{a_1, -a_1, \dots, a_M, -a_M\} \in \redA$ with
$0 \leq a_1 \leq a_2 \leq \dots \leq a_M \leq \infty$, we produce a 
related tableau $T^\circ$ by the following procedure.
For each $k$ from $M$ to $1$, find the largest $\ell$ such that
$a_k = a_\ell$, and slide entry $k'$ through the subtableau formed by
entries $\{k, k{+}1, \dots, \ell\}$. As we slide, the entries are reordered
accordingly (i.e. so that $\ell < k' < \min\{(k{+}1)',\ell{+}1\}$).  
See Figure~\ref{fig:changeconventions} for an example.

\begin{figure}[tb]
\[
{\ysetshade{Yellow!30}
\ysetaltshade{Blue!30}
\YSetShade{Green!20}
T\ =\ \begin{young}[c]
!1' & ?2' & ?2 & ?4'\\
!1 & ?3 & ?4 & !!5 \\
?3' & !!5' & !!6' & !!6
\end{young}
\qquad \qquad
T^\circ\ =\ \begin{young}[c]
!1 & ?2 & ?4 & ?4'\\
!1' & ?3 & ?2' & !!5 \\
?3' & !!6 & !!5' & !!6' 
\end{young}
}
\]
\caption{An example of $T$ and $T^\circ$, where
$\bolda = \{0, 0, 1, 1, 1, -1, -1, -1, \infty, \infty, \infty, \infty\}$.
The entries of $T^\circ$ are ordered 
$1 < 1' < 2 < 3 < 4 < 2' < 3' < 4' < 5 < 6 < 5' < 6'$.}
\label{fig:changeconventions}
\end{figure}

\begin{proposition}[{See \cite[Section 3]{Pur-ribbon}}]
\label{prop:changeconventions}
If $x = x_T$ according
to Convention~\ref{convention:even}, then $x = x_{T^\circ}$
according to Convention~\ref{convention:general}.  
\end{proposition}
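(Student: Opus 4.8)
The plan is to treat the proposition as a wall-crossing computation. Both conventions extend the basic correspondence $S\mapsto x_S(\bolda)$ from the open locus of multisets of $2M$ \emph{distinct} points of $\RP^1$, where it is given outright by Definition~\ref{def:correspondence}; in particular, on the locus of distinct \emph{symmetric} configurations $\{c_1,-c_1,\dots,c_M,-c_M\}$ with $0<c_1<\dots<c_M<\infty$ the $\preceq$-order is $c_1\prec -c_1\prec\cdots\prec c_M\prec -c_M$, so for a tableau whose entries are ordered as in \eqref{eqn:DSTorder} the letter "$k'$" occupies the $(2k{-}1)$\nth $\preceq$-slot and "$k$" the $(2k)$\nth. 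This locus lies in the $\preceq$-zone $Z_0$ with alternating sign pattern $(+,-,+,-,\dots)$, and on $Z_0\cap\redA$ these two extensions agree. The prescribed multiset $\bolda$ generally lies in a different $\preceq$-zone, and the two extensions reach it along genuinely different routes: Convention~\ref{convention:even} stays inside $\redA$, while Convention~\ref{convention:general} stays inside a single $\preceq$-zone. So the content of the proposition is to follow a fixed analytic family of fibre points from $Z_0\cap\redA$ to $\bolda$ inside $\redA$, record how its Convention~\ref{convention:general} label jumps whenever the family crosses a wall of the $\preceq$-zone decomposition, and check that the cumulative effect on the $\calM$-labelled tableau is the operation $T\mapsto T^{\circ}$ of Figure~\ref{fig:changeconventions}.

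First I would factor the degeneration of $\bolda$ inside $\redA$ into elementary steps. The key structural remark is that the $\preceq$-order breaks ties at equal absolute value by sign, so as two finite pairs $\{c_j,-c_j\}$ and $\{c_{j+1},-c_{j+1}\}$ of a configuration in $\redA$ come together, the "inner" points $-c_j$ and $c_{j+1}$ must exchange their $\preceq$-positions (in the limit the copy of the common value precedes the copy of its negative), and this exchange is a single crossing of a wall of the current $\preceq$-zone. I would therefore merge the coincidence blocks of $\bolda$ one pair at a time, in the order dictated by the definition of $T^{\circ}$ (blocks from the $\preceq$-large end downward, equivalently $k$ running from $M$ to $1$), so that the sequence of wall crossings produced by the path is in bijection with the sequence of jeu de taquin slides in the construction of $T^{\circ}$. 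Sending points to $0$ or to $\infty$ is to be treated in the same framework: because $0$ and $\infty$ are $\preceq$-extreme and fixed by $z\mapsto -z$, the local picture there is again that the block's value precedes its negative, so these blocks behave exactly as finite ones --- which is what licenses the single uniform description of $T^{\circ}$.

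The heart of the argument is to identify the effect of one wall crossing on the tableau label. Just after the crossing the two like-signed points coincide genuinely, so Theorem~\ref{thm:correspondence}(ii) applies to the resulting coincidence block: it says the fibre point is labelled by a tableau that is determined only up to dual equivalence on that block, and Theorem~\ref{thm:correspondence}(i) pins down its rectification shape. I would then use the direction in which the family approaches the wall --- here is where it matters that we have come from $Z_0$ along the prescribed path --- to single out the correct representative in the dual equivalence class, and show it is obtained from the previous label by exactly one jeu de taquin slide. Assembling the slides over all the crossings, in the order forced by the geometry, reproduces the definition of $T^{\circ}$ step by step. Since the endpoint of the path lies in $\redA$ and inside the $\preceq$-zone containing $\bolda$, Convention~\ref{convention:general}'s own continuation inside that zone matches what we have computed, giving $x_T(\bolda)=x_{T^{\circ}}(\bolda)$.

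The main obstacle, I expect, is exactly this wall-crossing bookkeeping, made awkward by the fact that Convention~\ref{convention:even} ignores the $\preceq$-zone decomposition entirely, so one cannot simply quote Theorem~\ref{thm:correspondence} on the $\redA$ side. One must instead keep track of a single analytically continued family $x(t)\in X(\bolda_t)$ and, at each wall, compute the change in its Convention~\ref{convention:general} label by a local analysis near the relevant point of $\CP^1$ --- using Lemma~\ref{lem:schubertwronskian} and the transversality behind Theorem~\ref{thm:MTV} to control which partition shapes $\lambda_k$ can occur and how the continuation selects among them. A further delicate point is to confirm that the moves coming from walls seated at $0$ and at $\infty$ are the same jeu de taquin slides as at a generic finite coincidence value, so that the procedure producing $T^{\circ}$ can be stated, as it is, without distinguishing those cases.
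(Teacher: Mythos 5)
You should note first that the paper does not actually prove Proposition~\ref{prop:changeconventions}: it imports it from \cite[Section 3]{Pur-ribbon}, so the real comparison is with the degeneration/slide machinery developed there and in \cite{Pur-Gr}. Your global strategy --- anchor the two conventions on distinct symmetric configurations, degenerate inside $\redA$ toward $\bolda$, and account for the discrepancy between the limit of the labels and the label at the limit by jeu de taquin relabellings at each change of $\preceq$-zone --- is the right spirit, and is essentially how the cited source proceeds. But the step you defer is the entire content of the statement: that a single zone crossing changes the Convention~\ref{convention:general} label by exactly the slide prescribed in the construction of $T^\circ$. The tools you propose cannot deliver it. Theorem~\ref{thm:correspondence} describes labels at a \emph{fixed} multiset, inside one zone; part (ii) determines the label only up to blockwise dual equivalence, and there is no ``correct representative'' for the direction of approach to single out --- the general-convention label at a multiset is genuinely non-unique, and what must be shown is that the tableau produced by the slides is \emph{a} valid label, i.e.\ that continuation from inside the zone of $\bolda$ reaches the same limit point as the symmetric path. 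Lemma~\ref{lem:schubertwronskian} only fixes the \emph{size} of the partition at each point, and the transversality behind Theorem~\ref{thm:MTV} only gives reducedness and unique path lifting; neither produces jeu de taquin. Identifying the wall-crossing behaviour with a slide is the hard core of \cite{Pur-Gr} and \cite[Section 3]{Pur-ribbon}, and your outline assumes it rather than proves it.

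The combinatorial bookkeeping is also not set up correctly. The definition of $T^\circ$ performs a slide for \emph{every} $k$ from $M$ down to $1$, including the last index of each coincidence block (``slide $k'$ through $\{k\}$''); these steps are within-pair reorderings and correspond to no crossing between two distinct symmetric pairs, so the claimed bijection between wall crossings and the slides in the construction of $T^\circ$ fails as stated, and the matching of merge order with slide order (blocks from the $\preceq$-large end, within a block from its right end, with within-pair moves interleaved) has to be arranged with much more care. Moreover, your claim that blocks sent to $0$ or $\infty$ ``behave exactly as finite ones'' rests on a wrong local picture: at $0$ and $\infty$ a pair collides with its own image under $z\mapsto -z$ at a fixed point of the involution (precisely where the $\preceq$-zones overlap), which is not the same geometry as two distinct pairs merging at $\pm v$; the reason a uniform description of $T^\circ$ is nevertheless possible is the dual-equivalence slack within a block given by Theorem~\ref{thm:correspondence}(ii), not an identity of local pictures. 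Until the slide lemma is supplied and this bookkeeping is pinned down, the proposal is an outline of the known proof rather than a proof.
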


%\begin{proof}
%This is similar to the proof of \cite[Theorem 3.15]{Pur-ribbon}.
%\end{proof}
%

In general, if $\bolda$ is a multiset of points that lie on an
arbitrary circle $\Gamma$, we can define the correspondence 
$\SYT(\Rect) \to X(\bolda)$ as follows.  
Choose a M\"obius transformation
$\psi_\Gamma \in \PGL_2(\CC)$ such that $\psi_\Gamma(\Gamma) = \RP^1$.  
Then define
$x_T(\bolda) := \psi_\Gamma^{-1}(x_T(\psi_\Gamma(\bolda))$.
This definition is
of course dependent on the choice of M\"obius transformation $\psi_\Gamma$, 
and so we will sometimes use the notation $x_T(\bolda, \psi_\Gamma)$, 
when it is important to emphasize the transformation that 
is being used.
However, for the circles that appear in Theorem~\ref{thm:main}, 
we will establish some standards.

Let $S^1 := \{z \in \CC \mid |z^2| = 1\}$ denote the unit circle,
and write $\imag$ for the imaginary unit.
For $\RP^1$, the standard choice of transformation will (of course)
be $\psi_{\RP^1} := \smallidmatrix$.
For a circle of the form $\Gamma = \gamma \RP^1$, where 
$\gamma = \alpha + \beta \imag \in S^1$ and $\beta > 0$, 
the standard choice of transformation will be
\[
\psi_{\gamma\RP^1} := 
\begin{pmatrix}
1 & 0 \\ 0 & -\gamma 
\end{pmatrix}
\,.
\]
We note that if $\bolda = -\bolda$ is a multiset of points on $\Gamma$,
then $\psi_\Gamma(\bolda) = -\psi_\Gamma(\bolda)$; therefore 
Convention~\ref{convention:even} can and will be used in this situation.
For a circle of the form $\Gamma = \gamma S^1$, where $\gamma > 0$, the 
standard choice of transformation will be
\[
\psi_{\gamma S^1} := 
\begin{pmatrix}
1 & -\gamma \\ \imag & \gamma\imag
\end{pmatrix}\,.
\]
In this case, if $\bolda = -\bolda$ is a multiset of points on $\Gamma$,
then it is not necessarily true that $\psi_\Gamma(\bolda) = -\psi_\Gamma(\bolda)$;
instead $\psi_\Gamma(\bolda)$
will be invariant under the transformation $z \mapsto -z^{-1}$.
Convention~\ref{convention:general} will be therefore be used 
for these circles.

%\comment{discuss how transforming $S^1$ does not intertwine $z \mapsto -z$,
%but instead this gets changed to $z \mapsto \frac{1}{z}$.}

Finally, we note that even if $\Gamma = \RP^1$, it can be interesting
to consider 
$x_T(\bolda, \psi)$ where
$\psi \in \PGL_2(\RR)$ is a M\"obius transformation 
other than the identity element.  
For example, let $\Psi := \quarterrotate \in \PGL_2(\RR)$.
This transformation has the property that 
that $\bolda$ is invariant under $z \mapsto -z$
if and only if $\Psi(\bolda)$ is invariant 
under $z \mapsto z^{-1}$.  The latter is, uncoincidentally, 
a composition of the two
types of symmetries we have just seen.  It turns out also to be
related to the combinatorial operation of folding a tableau.

\begin{proposition}
\label{prop:foldingrotation}
Let $\bolda = \{a_1, \dots, a_{2M}\} \in \redA$, and suppose
that $|a_k| \geq 1$ for all $k = 1, \dots, 2M$.
Then
%For $\Psi = \quarterrotate \in \PGL_2(\RR)$ we have
\[
   x_T(\bolda, \Psi) = x_{\fold(T)}(\bolda)\,.
\]
\end{proposition}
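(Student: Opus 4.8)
The plan is to track, step by step, how the operation $T \mapsto \fold(T)$ is reflected in the correspondence when we replace the flag data by its image under $\Psi = \quarterrotate$. Recall that $\fold$ is built from $M$ elementary moves: the $k$\nth move slides the box containing $k'$ through the subtableau on entries $\{k, k{+}1, \dots, M, M', \dots, (k{+}1)'\}$. Geometrically, by Definition~\ref{def:correspondence}, the correspondence $T \mapsto x_T(\bolda)$ reads off a nested sequence of partitions by degenerating the points of $\bolda$ toward $0$ (the first $M$ steps) and toward $\infty$ (the last $M$ steps). The key observation is that $\Psi$ is chosen precisely so that $\bolda = -\bolda$ becomes $\Psi(\bolda)$ invariant under $z \mapsto z^{-1}$; thus $\Psi$ swaps the roles of the points $0$ and $\infty$ with a pair of points $\pm 1$ (or, more precisely, the fixed points of $z \mapsto z^{-1}$, which are $\pm 1$). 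So on $\redA$, the hypothesis $|a_k| \geq 1$ guarantees that after applying $\Psi$ the images all lie in the "outer" region, which will let the continuous extension (Convention~\ref{convention:even}, transported via $\psi_{\RP^1} = \identity$ but with the nontrivial M\"obius transformation $\Psi$) be computed by a clean limiting procedure.

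First I would set up the comparison at the level of a single $\preceq$-zone: choose $\bolda \in \redA$ with $|a_k| \geq 1$, write it as $\{a_1, -a_1, \dots, a_M, -a_M\}$ with $1 \leq a_1 \leq \dots \leq a_M \leq \infty$, and observe that $\Psi$ carries this multiset to one lying on $\RP^1$ whose points now straddle the fixed points of the inversion. Next I would invoke Proposition~\ref{prop:changeconventions} (and the discussion of $T^\circ$ preceding it) to translate between Convention~\ref{convention:even} and Convention~\ref{convention:general}, so that both sides of the claimed identity are expressed in terms of the $\preceq$-zone extension, where Theorem~\ref{thm:correspondence} is available. The heart of the argument is then to show that the sequence of partitions $\lambda_0 \subseteq \lambda_1 \subseteq \dots \subseteq \lambda_{2M}$ produced by the degeneration defining $x_T(\bolda, \Psi)$ agrees with the sequence of shapes $(\fold(T))_{[1,k]}$. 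This I would do by induction on $k$: the $k$\nth elementary degeneration on the $\Psi$-side corresponds, after unwinding $\Psi$, to sliding a point past the inversion-fixed point, and I would match this with the $k$\nth folding move by identifying the jeu-de-taquin slide of the box $k'$ with the local monodromy of the path lifting. Concretely, the rectification-shape statement Theorem~\ref{thm:correspondence}(i) and the dual-equivalence statement (ii) let me reduce the comparison of the two tableaux to a comparison of rectification shapes of the relevant subtableaux, which is exactly the combinatorial content of one folding step.

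The main obstacle, I expect, is bookkeeping the change of conventions correctly: the identity $x_T(\bolda, \Psi) = x_{\fold(T)}(\bolda)$ mixes Convention~\ref{convention:even} (implicit on the right, since $\bolda \in \redA$) with the $\psi_{\RP^1}$-based Convention~\ref{convention:general} pulled back along $\Psi$, and one must verify that the continuous extensions on the two sides are compatible along the simply-connected regions involved — in particular that the hypothesis $|a_k| \geq 1$ keeps the relevant path inside a single zone so that no extra monodromy is introduced. A secondary technical point is to check that $\Psi$ indeed conjugates the degeneration-to-$0$-then-$\infty$ procedure into the procedure underlying the folding moves in the correct order (from $k=1$ to $M$), rather than its reverse; this amounts to computing $\Psi$ on the one-parameter subgroups $z \mapsto tz$ and $z \mapsto t^{-1}z$ and checking the directions match. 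Once these compatibilities are nailed down, the inductive step is a routine application of the jeu-de-taquin interpretation of Theorem~\ref{thm:correspondence} together with the definition of $\fold$.
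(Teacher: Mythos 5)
There is a genuine gap. The heart of this proposition is the case where $\bolda$ is a \emph{set}: the claim that replacing the reference transformation $\psi_{\RP^1}=\identity$ by $\Psi$ changes the labelling tableau by $\fold$. The paper does not reprove this; it cites \cite[Proposition~3.11]{Pur-ribbon} and then only adds the continuity argument for multisets. Your plan attempts to prove the set case from scratch, but the decisive step is asserted rather than established: you say you would ``match the $k$\nth folding move by identifying the jeu-de-taquin slide of the box $k'$ with the local monodromy of the path lifting.'' That identification is precisely the nontrivial content of the cited result, and it cannot be extracted from the tools you invoke. Theorem~\ref{thm:correspondence} describes the correspondence \emph{within} a fixed $\preceq$-zone (rectification shapes and dual equivalence), and Proposition~\ref{prop:changeconventions} compares the two conventions for the \emph{same} $\bolda$; neither controls how the tableau label changes when one compares two genuinely different degeneration schemes. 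Note that $x_T(\bolda,\Psi)=\Psi^{-1}\big(x_T(\Psi(\bolda))\big)$ is computed by degenerations of $\Psi(\bolda)$ toward $0$ and $\infty$, which on the $\bolda$ side means pushing points toward $\Psi^{-1}(0)=1$ and $\Psi^{-1}(\infty)=-1$, whereas $x_{\fold(T)}(\bolda)$ is governed by degenerations toward $0$ and $\infty$ themselves. Comparing these two schemes is a wall-crossing/monodromy problem (the subject of \cite{Pur-Gr} and \cite[Section 3]{Pur-ribbon}), and your proposed induction on $k$ gives no argument for why each crossing produces exactly one slide of the box $k'$, nor why the slides occur in the order $k=1,\dots,M$ through exactly the subtableau on $\{k,\dots,M,M',\dots,(k{+}1)'\}$.

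What you do get right is the peripheral part, which is all the paper itself proves here: the hypothesis $|a_k|\geq 1$ forces $\Psi(a_k)\geq 0$ (since $\Psi(a)=(a-1)/(a+1)$ maps $[1,\infty]$ into $[0,1]$ and $[-\infty,-1]$ into $[1,\infty]$), so $\Psi(\bolda)$ stays in a single $\preceq$-zone and the left-hand side extends continuously, while the right-hand side extends continuously by Convention~\ref{convention:even}; hence the identity for multisets follows from the identity for sets. To repair your write-up, either cite \cite[Proposition~3.11]{Pur-ribbon} for the set case as the paper does, or supply the monodromy analysis that justifies the slide-by-slide matching — the latter is a substantial argument, not a routine application of Theorem~\ref{thm:correspondence}.
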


In the case where $\bolda$ is a multiset, the left hand side is defined 
using Convention~\ref{convention:general},
whereas the right hand side is defined using Convention~\ref{convention:even}.

\begin{proof}
This is proved in \cite[Proposition~3.11]{Pur-ribbon}, in the case
where $\bolda$ is a set.  The right hand side extends continuously
to the case where $\bolda$ is a multiset using 
Convention~\ref{convention:even}.
The hypothesis $|a_k| \geq 1$ ensures that $\Psi(a_k) \geq 0$.
Hence the 
$\preceq$-zone containing
$\Psi(\bolda)$ is the same all $\bolda$, and so we can also extend the 
the left hand side continuously.
\end{proof}

%%%%%%%%%%%%%%%%%%%%%%%%%%%%%%%%%%%%%%%%%%%%%%%%%%%%%%%%%%%%%%%%%%%%%%
%%%%%%%%%%%%%%%%%%%%%%%%%%%%%%%%%%%%%%%%%%%%%%%%%%%%%%%%%%%%%%%%%%%%%%

\section{The embedding of the Lagrangian Grassmannian}
\label{sec:embedding}

In this section we will construct the embedding $\Omega$ of the
Lagrangian Grassmannian in $X$ and establish several facts about it, 
culminating in the proof Theorem~\ref{thm:main}.  The main idea
is to study the intersections of the Schubert varieties
$X_\lambda(a)$ with $\Omega$.
Some of the results here are basic facts about $\LG$ 
and its cohomology; for example, the cohomological proof of the
identity $|\DST(\Rect)| = |\AST(\Rect)|$ is given in 
Lemma~\ref{lem:fibrecount}.
Others are specific to the embedding $\Omega$, including
an analogue of Theorem~\ref{thm:MTV} for $\LG$.  

The definition of $\Omega$ is simple to state, even if it appears to
come out of thin air.
Consider the $2n$-dimensional subspace $\VV \subset \pol{2n}$,
of polynomials whose middle coefficient is $0$:
\[
  \VV := \{f(z) \in \CC_{2n}[z] \mid [z^n]f(z) = 0\}\,.
\]
Let $\redX := \Gr(n,\VV) \subset X$ 
denote the Grassmannian of $n$-planes in $\VV$.
We define a symplectic form $[\cdot, \cdot]$ on $\VV$: for 
\[
  f(z) = \sum_{\substack{k=-n \\ k \neq 0}}^{n} a_k \frac{z^{n+k}}{(n+k)!}
 \qquad \text{and} \qquad 
  g(z) = \sum_{\substack{k=-n \\ k \neq 0}}^{n} b_k \frac{z^{n+k}}{(n+k)!}
\]
let
\[
  [f(z),g(z)] := 
     \sum_{\substack{k=-n \\ k \neq 0}}^{n} 
       \frac{1}{k}a_{k}b_{-k}\,.
\]
For a subspace $A \subset \VV$, let $A^\perp$ denote the perpendicular
complement of $A$ under the symplectic form $[\cdot, \cdot]$.
Define $\Omega \subset X$ to be the Grassmannian of 
Lagrangian $n$-planes in $\VV$, with respect to $[\cdot, \cdot]$:
\[
  \Omega := 
   \{ x \in \redX \mid x = x^\perp \}\,.
\]
This is the embedding of $\LG$ that appears in the statement of 
Theorem~\ref{thm:main}.  

Given full flags $G_\bullet$ and $H_\bullet$ in $\VV$, 
we write $G_\bullet \perp H_\bullet$, if
$G_i = H_{2n-i}^\perp$ for all $i = 0, \dots, 2n$.
$G_\bullet$ is an \defn{orthogonal flag} 
if $G_\bullet \perp G_\bullet$, and
$G_\bullet$ is a \defn{symplectic flag} if 
$G_{2i} \cap G_{2i}^\perp = \{0\}$ for $i=1, \dots, n$.
We define, for each $a \in \CP^1$, a full flag $\redF_\bullet(a)$ 
in $\VV$ by starting with
\[
    F_0(a) \cap \VV 
   \ \subset\  F_1(a) \cap \VV 
   \ \subset\ \cdots
   \ \subset \ F_{2n+1}(a) \cap \VV 
\]
and deleting the repeated subspace.
Note that if $a = 0$ or $a = \infty$, then the repeated subspace is
$F_n(a) \cap \VV = F_{n+1}(a) \cap \VV$;
otherwise it is $F_0(a) \cap \VV = F_1(a) \cap \VV$.  Nevertheless,
this is a continuous family of flags.

\begin{lemma}
\label{lem:orthogonalflags}
For $a \in \CP^1$, we have $\redF_\bullet(a) \perp \redF_\bullet(-a)$.
In particular $\redF_\bullet(0)$ and 
$\redF_\bullet(\infty)$ are orthogonal flags, and 
$\redF_\bullet(a)$ is a symplectic flag for all $a \in \CC^\times$.
\end{lemma}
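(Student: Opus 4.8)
The plan is to verify everything by explicit computation in terms of the coefficient coordinates $a_{-n}, \dots, a_{-1}, a_1, \dots, a_n$ on $\VV$ used to define the symplectic form $[\cdot,\cdot]$. First I would unwind the definition of $\redF_\bullet(a)$ for $a \in \CC^\times$. For such $a$, the chain $F_i(a) \cap \VV$ collapses only at the bottom ($F_0(a)\cap\VV = F_1(a)\cap\VV = \VV$), so $\redF_i(a) = F_{i+1}(a)\cap\VV$ for $i = 1, \dots, 2n$, i.e. $\redF_i(a)$ is the space of polynomials in $\VV$ divisible by $(z+a)^{2n-i}$. For $a = 0$ the collapse is in the middle: $F_n(0)\cap\VV = F_{n+1}(0)\cap\VV$ is the span of the monomials $z^{n+1},\dots,z^{2n}$ (the $a_k$ with $k>0$); so $\redF_\bullet(0)$ is the coordinate flag $\langle z^{2n}\rangle \subset \langle z^{2n},z^{2n-1}\rangle \subset \cdots$, with the jump from $\redF_{n-1}(0)$ to $\redF_n(0)$ happening by adjoining $z^{n-1}$ (skipping the absent $z^n$). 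Dually, $\redF_\bullet(\infty)$ is the coordinate flag built from $z^0, z^1, \dots$, again skipping $z^n$.

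Next I would establish the main claim, $\redF_\bullet(a) \perp \redF_\bullet(-a)$, i.e. $\redF_i(a) = \redF_{2n-i}(-a)^\perp$ for all $i$. Since both sides have complementary dimensions, it suffices to show $[\,\cdot\,,\,\cdot\,]$ vanishes on $\redF_i(a) \times \redF_{2n-i}(-a)$. The key computational input is an identity describing the symplectic pairing of $(z+a)^p f(z)$ with $(z-a)^q g(z)$; concretely, one checks that if $f(z)$ is divisible by $(z+a)^{2n-i}$ and $g(z)$ is divisible by $(z-a)^{i}$, both lying in $\VV$, then $[f,g] = 0$. The cleanest way to see this is to reinterpret the form: for $f,g \in \VV$ written as above, $[f,g] = \sum_{k\neq 0} \frac1k a_k b_{-k}$ is, up to normalization, the constant term (coefficient of $z^0$) of a suitable bilinear expression such as $f(z)\,\widehat{g}(z)$ where $\widehat{g}$ is obtained from $g$ by the substitutions $z^{n+k}/(n+k)! \mapsto$ the dual monomial — essentially a residue pairing $\mathrm{Res}_{z=0}\big(f(z)\, g(1/z)\, \tfrac{dz}{z}\big)$ type formula, adjusted by the $1/k$ weights. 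Once the form is expressed as such a residue or constant-term functional, divisibility of $f$ by $(z+a)^{2n-i}$ and of $g$ by $(z-a)^{i}$ forces enough vanishing of the relevant Laurent coefficients that the pairing is zero; this is the step I expect to be the main obstacle, since getting the bookkeeping of the $1/k$ weights and the excluded $k=0$ term exactly right requires care.

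The two "in particular" statements then follow quickly. Setting $a = 0$ (equivalently $a = \infty$) in the main claim gives $\redF_\bullet(0) \perp \redF_\bullet(0)$ and $\redF_\bullet(\infty) \perp \redF_\bullet(\infty)$, which is precisely the assertion that these are orthogonal flags. For $a \in \CC^\times$, I need $\redF_{2i}(a) \cap \redF_{2i}(a)^\perp = \{0\}$; by the main claim $\redF_{2i}(a)^\perp = \redF_{2n-2i}(-a)$, so I must show $\redF_{2i}(a) \cap \redF_{2n-2i}(-a) = \{0\}$. But $\redF_{2i}(a)$ consists of polynomials in $\VV$ divisible by $(z+a)^{2n-2i}$ and $\redF_{2n-2i}(-a)$ of those divisible by $(z-a)^{2i}$; a common element is divisible by $(z+a)^{2n-2i}(z-a)^{2i}$, a polynomial of degree $2n$, hence is a scalar multiple of $(z+a)^{2n-2i}(z-a)^{2i}$ — and this lies in $\VV$ only if its $z^n$-coefficient vanishes, which (since $a \neq 0$) it does not unless the scalar is $0$. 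Finally I would note that the claimed continuity of the family $\redF_\bullet(a)$ was already recorded in the construction, so no separate argument is needed there.
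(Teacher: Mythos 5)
The skeleton of your argument matches the paper's (coordinate-flag check at $0,\infty$, a dimension count reducing $\redF_\bullet(a)\perp\redF_\bullet(-a)$ to vanishing of the pairing, and the degree-$2n$ divisibility argument for the symplectic claim), but the heart of the lemma is missing: you never actually prove that $[f,g]=0$ for $f\in\redF_i(a)$, $g\in\redF_{2n-i}(-a)$ with $a\in\CC^\times$, and you yourself flag this as ``the main obstacle.'' The residue reformulation you gesture at does not work as stated: the form is defined on the rescaled coefficients $a_k$ of $z^{n+k}/(n+k)!$ with weights $1/k$, so it is not a constant-term pairing of $f(z)$ against $g(1/z)$ in any direct sense, and translating ``divisible by $(z+a)^{2n-i}$'' into those transformed coordinates is precisely the nontrivial content of the lemma. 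The paper resolves this by exhibiting an explicit basis of $\redF_i(a)$, namely $f_i(a;z)=\frac{(z+a)^i\big((i+1-n)z-na\big)}{(i+1)!}=\frac{1}{(i+1)!}\big(z\frac{d}{dz}-n\big)(z+a)^{i+1}$, whose coefficient on $z^{n+k}/(n+k)!$ is $k\,a^{i+1-n-k}/(i+1-n-k)!$; the factor $k$ simultaneously puts $f_i(a;z)$ in $\VV$ and cancels the $1/k$ in the form, so that $[f_i(a;z),f_j(-a;z)]$ collapses to the coefficient of $z^{i+j+2-2n}$ in $e^{az}\cdot e^{-az}(n-j-1-az)$, which vanishes when $i+j\geq 2n$. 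Some explicit input of this kind (or a correctly normalized integral formula, which would amount to the same bookkeeping) is needed before your outline becomes a proof.

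A second, smaller gap is in the symplectic-flag step: you assert that the $z^n$-coefficient of $(z+a)^{2n-2i}(z-a)^{2i}$ is nonzero ``since $a\neq 0$,'' but that is not by itself a reason --- compare $(z+a)(z-a)$, whose middle coefficient vanishes for every $a$. What saves you is that both exponents are even: writing the product as $a^{2n}p(z/a)^2$ with $p(z)=(z+1)^{n-i}(z-1)^i$, the symmetry $[z^j]p(z)=(-1)^i[z^{n-j}]p(z)$ gives $[z^n]p(z)^2=(-1)^i\sum_j\big([z^j]p(z)\big)^2\neq 0$, which is exactly the argument the paper supplies and which your proposal needs to include.
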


\begin{proof}
If $a = 0$, or $a=\infty$, then $\redF_\bullet(a)$ is a coordinate flag,
and result is easily checked.  
Otherwise, consider the polynomials
$f_0(a;z), \dots, f_{2n-1}(a;z) \in \VV$, defined by
%as follows.  For 
%$0 \leq i < n$, let
%\[
%  f_i(a;z) = \frac{(z+a)^i}{i!} 
%  = \sum_{k = -n}^{-1} \frac{a^{i-n-k}}{(i-n-k)!} \cdot \frac{z^{n+k}}{(n+k)!}
%\]
%and for $n \leq i < 2n$, let
\[
  f_i(a;z) := \frac{(z+a)^i\big((i+1-n)z - na\big)}{(i+1)!} 
  = \sum_{k=-n}^{n} \frac{k\,a^{i+1-n-k}}{(i+1-n-k)!} 
      \cdot \frac{z^{n+k}}{(n+k)!}\,.
\]
Here, we adopt the useful convention that $\frac{1}{k!} = 0$ when $k$ 
is a negative integer.  It is not hard to see that 
$\{f_{2n-i}(a;z), \dots, f_{2n-1}(a;z)\}$ is a basis for $\redF_i(a)$,
if $a \in \CC^\times$.  Thus, it suffices to show that 
$[f_i(a;z), f_j(-a;z)] = 0$, whenever $i+j \geq 2n$.  We compute:
%There are two cases.  If $i \geq n$, $j < n$, then
%\begin{align*}
%[f_i(a;z) , f_j(-a;z)] 
%&= \sum_{k=1}^n \frac{1}{k}
%\left(
%\frac{(-k)\,a^{i+1-n+k}}{(i+1-n+k)!} \cdot 0
%\,-\,
%\frac{k\,a^{i+1-n-k}}{(i+1-n-k)!} \cdot \frac{(-a)^{j-n+k}}{(j-n+k)!}
%\right)
%%\\
%%&= - a^{i+j+1-2n} \sum_{k\in\ZZ}
%%\frac{1}{(i+1-n-k)!} \cdot
%%\frac{(-1)^{j-n+k}}{(j-n+k)!}
%\\
%&= -  \sum_{k\in \ZZ}
%\Big( [z^{i+1-n-k}] e^{az} \Big)
%\Big( [z^{j-n+k}] e^{-az} \Big)
%\\
%&= - [z^{i+j+1-2n}] e^{az} \cdot e^{-az} = 0\,.
%\end{align*}
%If $i \geq n$ and $j \geq n$, then
\begin{align*}
[f_i(a;z), f_j(-a;z)]  
&= \sum_{\substack{k=-n \\ k \neq 0}}^{n} 
\frac{1}{k}
\cdot
\frac{k\,a^{i+1-n-k}}{(i{+}1{-}n{-}k)!} \cdot 
\frac{(-k)(-a)^{j+1-n+k}}{(j{+}1{-}n{+}k)!}
\\
&= \sum_{k \in \ZZ} 
\frac{a^{i+1-n-k}}{(i+1-n-k)!} \cdot \frac{k(-a)^{j+1-n+k}}{(j+1-n+k)!}
\\
&= 
\sum_{k \in \ZZ} 
\Big([z^{i+1-n-k}]e^{az}\Big) 
\Big([z^{j+1-n+k}] e^{-az}(n-j-1 -az) \Big)
\\
&= [z^{i+j+2-2n}] e^{az} \cdot e^{-az}(n-j-1 -az)
= 0\,. 
\end{align*}
Finally, to see that $\redF_\bullet(a)$ is a symplectic flag, note
that if $g(z) \in \redF_{2i}(a) \cap \redF_{2n-2i}(-a)$, then $g(z)$ is
a scalar multiple of $p(z/a)^2$, where $p(z) = (z+1)^{n-i}(z-1)^{i}$.  
From the fact that $[z^j]p(z) = (-1)^i [z^{n-j}]p(z)$, we 
deduce that $p(z)^2 \notin \VV$, and therefore $g(z) = 0$.
\end{proof}

The \defn{Schubert cells} in $\redX$ relative to 
the flags $\redF_\bullet(a)$ are denoted
\[
  \redcell_\lambda(a) := \big\{ x \in X \bigmid 
   \dim (x \cap \redF_{2n-i}(a))  - \dim (x \cap \redF_{2n-1-i}(a))  = 
        \delta_{i \in J(\lambda)},
   \text{ for $0 \leq i \leq 2n{-}1$} \big\}\,,
\]
and the \defn{Schubert varieties} $\redX_\lambda(a)$ are their closures.
Here $\lambda$ belongs to the subset $\redL \subset \Lambda$ of
partitions with $\lambda^1 \leq n$.
Denote the conjugate partition of $\lambda$ by $\Tlambda$.
Let $\lambda_+ \in \Lambda$ 
denote the partition defined by
\[
  \lambda_+^i = 
  \begin{cases}
    \lambda^i + 1  & \quad \text{if $\lambda^i \geq i$} \\
    \lambda^i  & \quad \text{otherwise}\,.
  \end{cases}
\]
Diagrammatically, $\lambda_+$ is obtained by ``doubling the diagonal''
of $\lambda$, as illustrated in  Figure~\ref{fig:doublediagonal}.
%%%%%%%%%%%%%%%%%%%%%%%%%%%%%%%%%%%%%%%%%%%%%%%%%
\begin{figure}[tb]
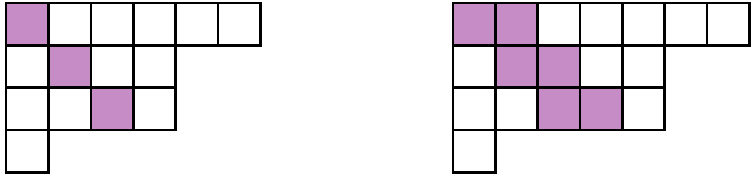

\begin{center}
\begin{young}[c]
??&   &   &   &   &  \\
  &?? &   &    \\
  &   &?? &   \\
  \\
\end{young}
\qquad\qquad\qquad
\begin{young}[c]
?? & ??&   &   &   &   &  \\ 
  &?? & ??&   &    \\
  &   &?? & ??&  \\
   \\
\end{young}
\end{center}
\caption{$\lambda = 6441$ (left) and $\lambda_+ = 7551$ (right).}
\label{fig:doublediagonal}.
\end{figure}
%%%%%%%%%%%%%%%%%%%%%%%%%%%%%%%%%%%%%%%%%%%%%%%%%

%If $\mu = \lambda_+$, we will also write $\lambda = \mu_-$.  Note that,
%$\mu_-$ is not defined for all $\mu$, but it is unique if it is defined.

\begin{lemma}
\label{lem:reducedschubert}
For a partition $\lambda \in \redL$ we have:
\begin{packedenumi}
\item If $a = 0$ or $a = \infty$, then 
$\redX_\lambda(a) = \redX \cap X_{\lambda_+}(a)$.
\item
If $a \in \CC^\times$, then $\redX_\lambda(a) = \redX \cap X_\lambda(a)$.
\end{packedenumi}
\end{lemma}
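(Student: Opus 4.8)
\textbf{Proof strategy for Lemma~\ref{lem:reducedschubert}.}
The plan is to compare the two flags $\redF_\bullet(a)$ in $\VV$ and $F_\bullet(a)$ in $\pol{2n}$, and to track how the defining incidence conditions for a Schubert variety transform when we intersect with $\redX = \Gr(n,\VV)$. The key observation is simply that a subspace $x \in \redX$ is contained in $\VV$, so $x \cap F_i(a) = x \cap (F_i(a) \cap \VV)$ for every $i$. Thus the dimensions of the intersections $x \cap F_i(a)$ that enter the Schubert conditions for $X_{\lambda_+}(a)$ or $X_\lambda(a)$ are exactly the dimensions of $x \cap \redF_j(a)$ for the appropriate index $j$; the only bookkeeping is to match up the indices, which differ because $\redF_\bullet$ is obtained from $F_\bullet(a) \cap \VV$ by deleting one repeated step. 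First I would make the index-matching precise: when $a \in \CC^\times$, the repeated subspace is $F_0(a)\cap \VV = F_1(a)\cap \VV$, so $\redF_i(a) = F_{i+1}(a)\cap\VV$ for $i = 0,\dots,2n$, and the jump set $J(\lambda)$ for $\redF_\bullet$ matches the jump set $J(\lambda)$ for $F_\bullet$ shifted by one; when $a = 0$ or $a = \infty$, the repeated subspace is in the middle, $F_n(a)\cap\VV = F_{n+1}(a)\cap\VV$, so $\redF_i(a) = F_i(a)\cap\VV$ for $i \le n$ and $\redF_i(a) = F_{i+1}(a)\cap\VV$ for $i \ge n$.

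With the index dictionary in hand, the heart of the proof is the combinatorial identity relating the jump conditions. For $a\in\CC^\times$, the shift by one is uniform, so the condition ``$\dim(x\cap \redF_{2n-i}(a)) - \dim(x\cap\redF_{2n-1-i}(a)) = \delta_{i\in J(\lambda)}$ for $0\le i\le 2n-1$'' on a subspace $x\subset\VV$ is literally the restriction to $\VV$ of the Schubert condition defining $X_\lambda(a)$ in $X$ (the extra, $2n$-th condition in the $X$-definition is automatic since $x\subset\VV$ forces $\dim(x\cap F_0(a)) = 0$). This gives part (ii) on the level of cells, $\redcell_\lambda(a) = \redX \cap X^\circ_\lambda(a)$, and then (ii) for Schubert varieties follows by taking closures, using that $\redX$ is irreducible (so $\redX \cap X_\lambda(a) = \overline{\redX \cap X^\circ_\lambda(a)}$, once one checks this intersection is already of the expected codimension — equivalently that $X_\lambda(a)$ meets $\redX$ properly, which I would get from the transversality in Lemma~\ref{lem:orthogonalflags}, or more elementarily by a direct dimension count with the explicit basis $f_i(a;z)$). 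For part (i), the middle-shift means the jump set of the $\redF_\bullet(0)$-Schubert condition associated to $\lambda$, when translated into a $F_\bullet(0)$-jump set, is exactly $J(\lambda)$ with each element $\ge n$ bumped up by one — and this reindexing of $J(\lambda)$ is precisely $J(\lambda_+)$, since ``doubling the diagonal'' of $\lambda$ is the partition whose associated $01$-sequence is obtained from that of $\lambda$ by inserting a $0$ immediately after position $n$ (equivalently, inserting the repeated flag step). I would verify the identity $J(\lambda_+) = \{\,j : j\in J(\lambda),\ j<n\,\}\cup\{\,j+1 : j\in J(\lambda),\ j\ge n\,\}$ directly from the definitions of $J$ and of $\lambda_+$; this is the one genuinely computational point, but it is a short check on binary strings.

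The case $a = \infty$ is handled exactly like $a = 0$ since $\redF_\bullet(\infty)$ is also a coordinate flag with the repeated step in the middle; alternatively one can deduce it from the $a = 0$ case by applying the M\"obius transformation $z \mapsto 1/z$, which preserves $\VV$ and the symplectic form up to scalar and swaps $\redF_\bullet(0)$ with $\redF_\bullet(\infty)$. The main obstacle I anticipate is not any single step but the careful index arithmetic: one has to be scrupulous about the two different conventions for labelling flag steps (the $F_i$ are indexed by dimension, whereas $\redF_i$ skips one value) and about the off-by-one in $J(\cdot)$, and it is easy to get the shift in the wrong direction. Once the dictionary $\redF_i(a) \leftrightarrow F_?(a)\cap\VV$ is written down correctly, everything else is either immediate (the cell equality, from $x\subset\VV$) or a brief finite check (the $J(\lambda_+)$ identity, the properness of the intersection).
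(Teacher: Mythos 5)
Your proposal follows essentially the same route as the paper's proof: identify $\redF_i(a)$ with $F_i(a)\cap\VV$ or $F_{i+1}(a)\cap\VV$ (uniform shift for $a\in\CC^\times$, middle shift for $a=0,\infty$), use $x\subseteq\VV$ to equate the incidence numbers, and match jump sets via $J(\lambda_+)=\{\,j\in J(\lambda):j<n\,\}\cup\{\,j+1:j\in J(\lambda),\ j\ge n\,\}$, which is exactly the cell-level computation the paper carries out. The paper passes from cells to varieties implicitly by the standard cell-decomposition and closure-order bookkeeping rather than your expected-codimension/irreducibility argument (and note that Lemma~\ref{lem:orthogonalflags} gives orthogonality of the flags, not transversality of the intersection), but this is a minor difference of packaging rather than of approach.
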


\begin{proof}
Suppose $x \in \redcell_\lambda(a)$, and 
$x \in X^\circ_{\lambda'}(a)$.  Let 
$J(\lambda) = \{j_1, \dots, j_n\}$ and
$J(\lambda') = \{j'_1, \dots, j'_n\}$,
with $j_1 < \dots <j_n$ and $j'_1 < \dots < j'_n$. 
If $a = 0$ or $a = \infty$, then 
$\redF_i(a) = F_i(a) \cap \VV$ if $i < n$, and 
$\redF_i(a) = F_{i+1}(a) \cap \VV$ if $i \geq n$.
It follows from the definition of the Schubert cells that
$j_i = j'_i$
if $j_i < n$ and $j'_i = j_i+1$ if $j_i \geq n$.  Equivalently,
$\lambda' = \lambda_+$, which proves (i).
If $a \in \CC^\times$, 
then $\redF_i(a) = F_{i+1}(a) \cap \VV$.  It follows 
that $j_i = j'_i$ for
all $i$; hence $\lambda' = \lambda$, which proves (ii).
\end{proof}

\begin{lemma}
\label{lem:perpconjugate}
If $x \in \redX_\lambda(a)$, 
then $x^\perp \in \redX_{\Tlambda}(-a)$,
\end{lemma}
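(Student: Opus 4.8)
The plan is to reduce the claim to the corresponding statement about flags, namely Lemma~\ref{lem:orthogonalflags}, which says $\redF_\bullet(a) \perp \redF_\bullet(-a)$, i.e. $\redF_i(a) = \redF_{2n-i}(-a)^\perp$. The key observation is that taking the symplectic-perpendicular complement $A \mapsto A^\perp$ is an inclusion-reversing involution on subspaces of $\VV$ that sends dimension $d$ to dimension $2n-d$, and that Schubert varieties are cut out purely by dimension conditions of the form $\dim(x \cap \redF_j(a)) \geq r$. So applying $\perp$ should turn the defining conditions for $\redX_\lambda(a)$ into the defining conditions for $\redX_{\Tlambda}(-a)$, and the combinatorial content is just the fact that conjugating a partition interchanges the roles of rows and columns, which at the level of the sets $J(\lambda)$ corresponds to a complementation.

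First I would record the linear-algebra fact: for subspaces $A, B \subset \VV$ with $\dim \VV = 2n$, one has $\dim(A^\perp \cap B^\perp) = \dim((A+B)^\perp) = 2n - \dim(A+B) = 2n - \dim A - \dim B + \dim(A \cap B)$. Applying this with $A = x$ (so $A^\perp = x^\perp$, using that $x$ ranges over $\redX$ where $\dim x = n$, hence $\dim x^\perp = n$) and $B = \redF_j(-a)$, and then using $\redF_j(-a)^\perp = \redF_{2n-j}(a)$ from Lemma~\ref{lem:orthogonalflags}, I get
\[
  \dim\big(x^\perp \cap \redF_{2n-j}(a)\big)
  = n - \dim x \cap \redF_j(-a)\ \text{shifted appropriately},
\]
so that the jump sequence of $x^\perp$ against the flag $\redF_\bullet(a)$ is, at each index, the "complement" of the jump sequence of $x$ against $\redF_\bullet(-a)$. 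Concretely, if $x \in \redcell_\lambda(-a)$ has jump set $J(\lambda) = \{j_1 < \dots < j_n\}$ (so $x$ meets $\redF_{2n-i}(-a)$ in one more dimension than $\redF_{2n-1-i}(-a)$ exactly when $i \in J(\lambda)$), then $x^\perp \in \redcell_{\lambda'}(a)$ where $J(\lambda')$ is determined by the complement $\{0,1,\dots,2n-1\} \setminus \{2n-1-i : i \in J(\lambda)\}$. I would then check — this is a short, standard manipulation with $J(\lambda) = \{i - 1 + \lambda^{n+1-i}\}$ — that this complementation operation on jump sets is exactly the one realizing $\lambda \mapsto \Tlambda$ on $\redL$ (partitions fitting in an $n \times n$ box, where conjugation is the natural duality). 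Since $\perp$ is a homeomorphism (it is linear-algebraic and continuous), it carries the cell $\redcell_\lambda(-a)$ onto $\redcell_{\Tlambda}(a)$, hence the closure $\redX_\lambda(-a)$ onto $\redX_{\Tlambda}(a)$; replacing $a$ by $-a$ gives the stated form. One should be slightly careful that the problematic "repeated subspace" in the construction of $\redF_\bullet$ occurs at a location compatible with $a \leftrightarrow -a$ — but it does, since $a=0,\infty$ are the $z \mapsto -z$-fixed points and for those the flag is a coordinate flag, and for $a \in \CC^\times$ the repeated subspace $\redF_0 = \redF_1$ is handled symmetrically.

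The main obstacle I anticipate is purely bookkeeping rather than conceptual: getting the index shifts exactly right in the passage from "$\dim(x \cap \redF_j)$ conditions" to "$\dim(x^\perp \cap \redF_{2n-j})$ conditions," since the flag $\redF_\bullet$ is indexed $0,\dots,2n$ but has an internal coincidence, and since the Schubert-cell definition uses $\redF_{2n-i}$ and $\redF_{2n-1-i}$ while the Schubert-variety definition uses $\redF_{i+1}$ — so I must make sure the conjugation identity for $J(\lambda)$ matches the convention in force. I expect that once the dictionary $\{\text{jump set of } x^\perp \text{ vs } \redF_\bullet(a)\} = \{2n\} \text{-complement of } \{\text{jump set of } x \text{ vs } \redF_\bullet(-a)\}$ is written out, the identification with $\lambda \mapsto \Tlambda$ is immediate from the well-known description of conjugation in terms of the "beta-set" (the $J(\lambda)$) of a partition in an $n \times n$ box, so I would keep that verification brief and cite it as standard.
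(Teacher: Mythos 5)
Your proposal is correct and follows essentially the same route as the paper: reduce to Schubert cells, apply the dimension identity $\dim(A\cap B)=\dim\VV-\dim A^\perp-\dim B^\perp+\dim(A^\perp\cap B^\perp)$ with $B$ a flag subspace and $B^\perp$ given by Lemma~\ref{lem:orthogonalflags}, deduce that $i \in J(\lambda)$ exactly when $2n-1-i \notin J(\lambda')$, and identify this jump-set complementation with conjugation of partitions in the $n\times n$ box. The bookkeeping you flag is exactly what the paper carries out, so no gap remains.
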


\begin{proof}
It is enough to prove the corresponding statement for Schubert cells.
Suppose $x \in \redcell_\lambda(a)$ and
$x^\perp \in \redcell_{\lambda'}(-a)$. 
We must show that $\lambda' = \Tlambda$.
%Let $J(\lambda) = \{j_1, \dots, j_n\}$, 
%$J(\lambda') = \{j'_1, \dots, j'_n\}$.

For any two subspaces $A,B \subset \VV$ we have
\[
   \dim  (A \cap B)
    =  \dim \VV - \dim A^\perp - \dim B^\perp
      + \dim (A^\perp \cap B^\perp) \,.
\]
Putting $A = x$, $B = \redF_i(a)$, and hence 
$B^\perp = \redF_{2n-i}(-a)$ by Lemma~\ref{lem:orthogonalflags},
we obtain
\[
\dim  (x \cap \redF_{i}(a))
= n-i + \dim (x^\perp \cap \redF_{2n-i}(-a)) \,,
\]
and therefore
\begin{multline*}
   \Big (\dim (x \cap \redF_{i+1}(a))  - \dim  (x \cap \redF_i(a)) \Big)
 \\
   + 
   \Big( \dim (x^\perp \cap \redF_{2n-i}(-a))
     - \dim  (x^\perp \cap \redF_{2n-i-1}(-a))  \Big)
  = 1\,.
\end{multline*}
From the definition of $X^\circ_\lambda(a)$, it follows that
$i \in J(\lambda)$ if and only if $2n-1-i \notin J(\lambda')$; equivalently,
$\lambda' = \Tlambda$.
\end{proof}

Let $\Sigma_0$ denote the set of all strict
partitions $\sigma : \sigma^1 > \sigma^2 > \dots > \sigma^d > 0$, with
$\sigma_1 \leq n$.  We generally represent a strict partition pictorially
using its shifted diagram, and therefore 
in the context of strict partitions,
we will use the symbol $\shiftedstair$ to denote
the partition $n > n-1 > \dots > 2 > 1$, which is
the largest partition in $\Sigma_0$.
We also define the analogous set for not-necessarily-strict partitions:
$\Sigma_1 := \{ \lambda \in \Lambda \mid \lambda \subseteq \stair\}$.

For $\sigma \in \Sigma_0$, the \defn{double} of $\sigma$ 
is the partition $\lambda \in \Lambda$, defined by
\[
  \lambda^i = 
  \sigma^i + \#\{j  \mid j \leq i < j+\sigma^j\} \,,
\]
where by convention, $\sigma^i = 0$ if $i$ is greater than the number
of parts of $\sigma$.
The diagram of $\lambda$ is composed of two copies of the shifted
diagram of $\sigma$, one of which is reflected along a diagonal.
This is illustrated in Figure~\ref{fig:double}.

%%%%%%%%%%%%%%%%%%%%%%%%%%%%%%%%%%%%%%%%%%%%%%%%%
\begin{figure}[tb]
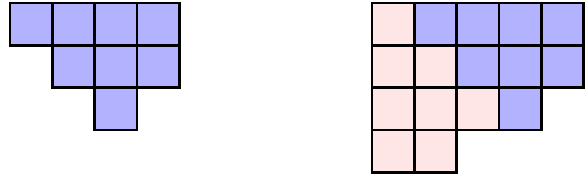

\begin{center}
\begin{young}[t]
? & ? & ? & ? \\
, & ? & ? & ? \\
, & , & ?  \\
\end{young}
\qquad\qquad\qquad
\begin{young}[t]
! & ? & ? & ? & ? \\
! & ! & ? & ? & ? \\
! & ! & ! & ?  \\
! & ! 
\end{young}
\end{center}
\caption{The strict partition $431$ (left) and its double $5542$ (right).}
\label{fig:double}.
\end{figure}
%%%%%%%%%%%%%%%%%%%%%%%%%%%%%%%%%%%%%%%%%%%%%%%%%

\begin{lemma}
\label{lem:conjugates}
Suppose $x \in \Omega$.
For each $a \in \CP^1$, let $\lambda_a$ denote the unique partition 
such that $x \in X^\circ_{\lambda_a}(a)$.  Then
%Then $x \in \Omega$ if and only if the
%following three conditions hold:
\begin{packedenumi}
\item $\lambda_0$ and $\lambda_\infty$ are doubles of strict partitions;
\item $\lambda_{-a}$ is the conjugate of $\lambda_a$\,
for all $a \in \CC^\times$.
\end{packedenumi}
\end{lemma}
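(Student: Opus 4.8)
The plan is to deduce both statements from the structural lemmas already in hand, chiefly Lemma~\ref{lem:perpconjugate} together with Lemma~\ref{lem:reducedschubert} and the description of $\Omega$ as the Lagrangian Grassmannian in $\VV$. The first observation is that membership $x \in X^\circ_{\lambda_a}(a)$ can be translated into membership in the reduced Schubert cell $\redcell_{\kappa_a}(a)$ of $\redX = \Gr(n,\VV)$, where $\kappa_a$ is related to $\lambda_a$ by Lemma~\ref{lem:reducedschubert}: for $a \in \CC^\times$ we have $\lambda_a = \kappa_a$, while for $a \in \{0,\infty\}$ we have $\lambda_a = (\kappa_a)_+$, i.e.\ $\lambda_a$ is obtained from $\kappa_a$ by doubling the diagonal. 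So it suffices to prove the analogous facts about the partitions $\kappa_a \in \redL$, and then transport them through the ``doubling the diagonal'' operation at $0$ and $\infty$.

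For part (ii), I would argue as follows. Since $x \in \Omega$ means $x = x^\perp$, Lemma~\ref{lem:perpconjugate} gives $x = x^\perp \in \redX_{\Tkappa_a}(-a)$ whenever $x \in \redcell_{\kappa_a}(a)$; combined with $x \in \redcell_{\kappa_{-a}}(-a)$ and the fact that a point lies in a unique reduced Schubert cell for a given flag, this forces $\kappa_{-a} = \Tkappa_a$ for all $a \in \CC^\times$. By Lemma~\ref{lem:reducedschubert}(ii) this is exactly the statement $\lambda_{-a} = \widetilde{\lambda_a}$ for $a \in \CC^\times$, which is part~(ii). (One small point to check: that $x$ indeed lies in \emph{some} reduced Schubert cell relative to $\redF_\bullet(a)$, which holds because these flags are full flags in $\VV$ and $x \subset \VV$.)

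For part (i), I would apply the same perpendicularity argument at $a = 0$ (and $a=\infty$), where now $\redF_\bullet(0)$ is an \emph{orthogonal} flag by Lemma~\ref{lem:orthogonalflags}, so $-a = a$ and Lemma~\ref{lem:perpconjugate} gives $x \in \redcell_{\Tkappa_0}(0)$; uniqueness of the cell forces $\kappa_0 = \Tkappa_0$, i.e.\ $\kappa_0$ is self-conjugate, and likewise $\kappa_\infty$ is self-conjugate. It remains to observe that a self-conjugate partition $\kappa \in \redL$ (so $\kappa^1 \le n$) has the property that $\kappa_+$, obtained by doubling the diagonal, is precisely the double of a strict partition --- indeed, reading off the arm-lengths of the diagonal cells of a self-conjugate $\kappa$ gives a strict partition $\sigma$, and comparing the formula for $\kappa_+$ in terms of $\kappa$ with the definition of ``double of $\sigma$'' shows $\kappa_+$ equals the double of $\sigma$. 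Since $\lambda_0 = (\kappa_0)_+$ by Lemma~\ref{lem:reducedschubert}(i), we conclude $\lambda_0$ (and $\lambda_\infty$) is a double of a strict partition, which is part~(i).

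The main obstacle is the combinatorial bookkeeping in the last step: one has to verify carefully that ``double the diagonal'' applied to a self-conjugate partition coincides with the ``double of a strict partition'' operation defined via $\lambda^i = \sigma^i + \#\{j \mid j \le i < j + \sigma^j\}$. This is a matter of matching up the two diagrammatic descriptions --- self-conjugate $\kappa$ decomposes as a diagonal together with its arm and leg (which are transposes of each other), and $\kappa_+$ fattens the diagonal into a $2\times 2$-ish staircase, exactly as the double of $\sigma$ is ``two copies of the shifted diagram of $\sigma$, one reflected along a diagonal.'' I expect this to be routine but requiring a careful figure or index chase. Everything else is a direct application of Lemmas~\ref{lem:orthogonalflags}, \ref{lem:reducedschubert}, and~\ref{lem:perpconjugate}, plus uniqueness of Schubert cells.
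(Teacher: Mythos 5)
Your proposal is correct and follows essentially the same route as the paper: reduce to the cells $\redcell_{\kappa_a}(a)$ via Lemma~\ref{lem:reducedschubert}, apply Lemma~\ref{lem:perpconjugate} with $x = x^\perp$ to get $\kappa_{-a} = \widetilde{\kappa_a}$, conclude (ii) directly and (i) from self-conjugacy of $\kappa_0, \kappa_\infty$ plus the fact that doubling the diagonal of a self-conjugate partition gives the double of a strict partition. The only difference is that you spell out this last combinatorial equivalence, which the paper leaves as an implicit observation.
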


\begin{proof}
If $x \in \Omega$, then $x = x^\perp$.  Let $\kappa_a$ 
be the unique partition such $x \in \redcell_{\kappa_a}$.
By Lemma~\ref{lem:perpconjugate}, we see that 
$\kappa_{-a} = \widetilde{\kappa_a}$.  In particular 
$\mu = \kappa_0$ and $\nu = \kappa_\infty$ are self-conjugate,
and by Lemma~\ref{lem:reducedschubert}(i), 
$\lambda_0 = \mu_+$ and $\lambda_\infty = \nu_+$.
This is equivalent to (i).  
Statement (ii)
follows from Lemma~\ref{lem:reducedschubert}(ii), since 
$\kappa_a = \lambda_a$, and $\kappa_{-a} = \lambda_{-a}$ for 
$a \in \CC^\times$.
\end{proof}

For $\sigma \in \Sigma_0$, and $a = 0$ or $a = \infty$, 
let $\Omega_\sigma(a) := \Omega \cap X_\lambda(a)$
where $\lambda$ is the double of $\sigma$.
These are \defn{Schubert varieties} in $\Omega$, defined relative
to the orthogonal flags $\redF_\bullet(0)$ and $\redF_\bullet(\infty)$.
Let $[\Omega_\sigma] \in H^*(\Omega)$ denote cohomology class of 
$\Omega_\sigma(0)$ (or equivalently the class of $\Omega_\sigma(\infty)$).

For $\lambda \in \Lambda$ and $a \in \CC^\times$, we define
$\Omega_\lambda(a) := \Omega \cap X_\lambda(a)$.  
These are not Schubert varieties in $\Omega$, but rather, restrictions
of Schubert varieties defined relative to symplectic flags in $\VV$.
The next lemma shows that $\Sigma_1$ is the natural indexing set for
the schemes $\Omega_\lambda(a)$.

\begin{lemma}
\label{lem:restrictschubert}
For $\lambda \in \Lambda$, and $a \in \CC^\times$, 
$\Omega_\lambda(a)$ is non-empty if and only
if $\lambda \in  \Sigma_1$.  
If $\lambda = \stair$ then $\Omega_\lambda(a)$
is a single point.
\end{lemma}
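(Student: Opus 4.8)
The plan is to reduce the claim to a statement about Lagrangian subspaces meeting a single flag, and then to an explicit dimension count. For the first part, I would use the perpendicularity of the symplectic flags. Suppose $x \in \Omega_\lambda(a)$, so $x = x^\perp$ and $x \in X_\lambda(a)$. By Lemma~\ref{lem:reducedschubert}(ii), $x \in \redX_\lambda(a)$, and by Lemma~\ref{lem:perpconjugate}, $x = x^\perp \in \redX_{\Tlambda}(-a)$. Intersecting, $x$ lies in $\redX_\lambda(a) \cap \redX_{\Tlambda}(-a)$; in particular $\Omega_\lambda(a) = \Omega_{\Tlambda}(-a)$, and a necessary condition for non-emptiness is that $\redX_\lambda(a) \cap \redX_{\Tlambda}(-a) \neq \emptyset$. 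In $\redX = \Gr(n, \VV)$, two Schubert varieties defined relative to transverse flags (and $\redF_\bullet(a), \redF_\bullet(-a)$ are transverse for $a \in \CC^\times$, since $\redF_i(a) \cap \redF_{2n-i}(-a) = \redF_i(a) \cap \redF_i(a)^\perp = \{0\}$ by Lemma~\ref{lem:orthogonalflags} and the symplectic-flag property) intersect if and only if their indexing partitions are complementary-comparable, i.e. $\Tlambda \subseteq \lambda^\vee$ inside the $n\times n$ box, which unwinds to $|\lambda| + |\Tlambda| \leq n^2$, i.e. $\lambda$ fits together with its transpose inside the $n \times n$ square. I expect this to be exactly equivalent to $\lambda \subseteq \stair$, i.e. $\lambda \in \Sigma_1$: a partition in the $n\times n$ box with $\lambda^1 \le n$ satisfies $\lambda \subseteq \stair$ iff $\lambda^i \le n - i$ for all $i$, iff the diagram of $\lambda$ and the diagram of $\Tlambda$ (reflected) are disjoint inside the $n\times n$ box except along the anti-diagonal boundary — which is precisely the condition $\Tlambda \subseteq \lambda^\vee$.

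For the converse — that $\lambda \in \Sigma_1$ forces $\Omega_\lambda(a) \neq \emptyset$ — the cleanest route is to exhibit a Lagrangian point. Since $\redF_\bullet(a) \perp \redF_\bullet(-a)$, for each strict partition $\sigma$ one can build an isotropic subspace spanned by suitable basis vectors $f_i(a;z)$ (from the proof of Lemma~\ref{lem:orthogonalflags}), choosing indices symmetrically so that the span is its own perpendicular; one then checks that it lies in $X_\lambda(a)$ for $\lambda$ the double of $\sigma$, and more generally in $\redX_\mu(a)$ for any $\mu \in \Sigma_1$ by an appropriate choice. Alternatively, and perhaps more efficiently, I would invoke a dimension-count argument directly on $\Omega$: the restriction $\Omega_\lambda(a) = \Omega \cap X_\lambda(a)$ is cut out inside $\Omega \cong \LG$ (of dimension $M = \binom{n+1}{2}$) by the condition of codimension at most $|\lambda|$ in $X$; one shows by a transversality or Kleiman-type argument (using that $\PGL_2$ or $\Sp$ acts with the flag in general position among symplectic flags) that the intersection has the expected codimension, which is finite and non-negative exactly when $\lambda \in \Sigma_1$. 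Since the Schubert classes $[\Omega_\lambda]$ for $\lambda \in \Sigma_1$ already form a basis of $H^*(\LG)$ — wait, these are not the Schubert classes of $\LG$; rather I would compare $[X_\lambda]$ restricted to $H^*(\Omega)$ with the known basis $\{[\Omega_\sigma]\}$ and observe the restriction is non-zero precisely for $\lambda \in \Sigma_1$.

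For the last sentence, the case $\lambda = \stair$: here $|\stair| = M$, which equals $\dim \Omega$, so $\Omega_{\stair}(a)$ is expected to be zero-dimensional; I would show it is a single \emph{reduced} point. One way: $\Omega_{\stair}(a) = \redX_{\stair}(a) \cap \redX_{\Ttau}(-a)$ where $\tau = \stair$ and $\Ttau = \stair$ is self-conjugate (the staircase $n{-}1 > n{-}2 > \cdots$ wait — $\stair$ as a partition is $(n, n{-}1, \dots, 1)$ inside the $n \times n$ box? No: $\stair$ has $\lambda^1 = n$; but a partition with $\lambda^1 \le n$ and $\lambda \subseteq \stair$ can have $\lambda^1 = n$ only if $n \le n-1$, impossible, so actually $\stair$ here must mean the sub-staircase $(n{-}1, n{-}2, \dots, 1, 0)$ fitting in $\Sigma_1$ — I would use the paper's convention $\stair : n \ge n{-}1 \ge \cdots \ge 1$ carefully and note $\stair \in \Sigma_1$ means we work with the diagram that is exactly complementary to its own transpose). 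In any case, $\stair$ is the unique maximal element of $\Sigma_1$, $\Ttau$ equals $\tau^\vee$ inside the $n \times n$ box, so $\redX_\tau(a) \cap \redX_{\Ttau}(-a)$ is a transverse intersection of complementary Schubert varieties relative to transverse flags — hence a single reduced point — and this point is automatically Lagrangian because it is fixed by the perpendicularity involution (the intersection is invariant under $x \mapsto x^\perp$ and, being a single point, must be pointwise fixed). The main obstacle I anticipate is the bookkeeping in the first paragraph: matching the three equivalent conditions ($\lambda \subseteq \stair$; $\Tlambda \subseteq \lambda^\vee$; non-emptiness of the $\redX$-intersection) requires care with the index set $J(\lambda)$ and with which staircase convention is in force, but once that dictionary is set up the rest follows from standard Schubert calculus in $\Gr(n, \VV)$ together with the perpendicularity lemmas already established.
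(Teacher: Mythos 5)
Your reduction to Schubert calculus in $\redX=\Gr(n,\VV)$ breaks at the decisive step, for two separate reasons. First, the flags $\redF_\bullet(a)$ and $\redF_\bullet(-a)$ are \emph{not} transverse: the symplectic-flag property only gives $\redF_{2i}(a)\cap\redF_{2i}(a)^\perp=\{0\}$ at even indices, and at odd complementary indices the two flags genuinely meet. For instance $\redF_1(a)\subset\redF_1(a)^\perp=\redF_{2n-1}(-a)$, since every line is isotropic for an alternating form; more generally $\redF_{2n-2i+1}(a)\cap\redF_{2i-1}(-a)$ is the line spanned by $(z+a)^{2i-1}(z-a)^{2n-2i+1}$, whose middle coefficient vanishes, so it lies in $\VV$. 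Hence the generic-position criterion for non-emptiness of $\redX_\lambda(a)\cap\redX_{\Tlambda}(-a)$ cannot be invoked. Second, the combinatorial dictionary you propose is false: inside the $n\times n$ box, $\Tlambda\subseteq\lambda^\vee$ is not equivalent to $\lambda\subseteq\stair$; it already fails for $\lambda=\stair$, which is self-conjugate with $2|\stair|=n(n+1)>n^2$. As written, your criterion would prove $\Omega_{\stair}(a)=\emptyset$, contradicting the lemma's second assertion, and the same two errors underlie your ``complementary partitions, transverse flags'' argument for the single-point claim. The paper avoids both problems by staying in the ambient $X=\Gr(n,\pol{2n})$: by Lemma~\ref{lem:conjugates}, $\Omega_\lambda(a)\subset X_\lambda(a)\cap X_{\Tlambda}(-a)$, the flags $F_\bullet(a)$ and $F_\bullet(-a)$ \emph{are} transverse there, and complementation takes place in the $n\times(n{+}1)$ rectangle, where $\Tlambda\subseteq\lambda^\vee$ is equivalent to $\lambda\in\Sigma_1$ and $\Tlambda=\lambda^\vee$ exactly for $\lambda=\stair$; this gives both the emptiness for $\lambda\notin\Sigma_1$ and the single-point statement.

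The existence half is also not established. Your first route never produces an isotropic subspace: Lemma~\ref{lem:orthogonalflags} controls only the pairings $[f_i(a;z),f_j(-a;z)]$ between the two flags, not the Gram matrix of the $f_i(a;z)$ among themselves, and your sketch slides into doubles of strict partitions, which is the $a\in\{0,\infty\}$ situation rather than the case $a\in\CC^\times$ at hand. The paper's proof hinges on an explicit point: the span of $g_1(z/a),\dots,g_n(z/a)$ with $g_i(z)=(z+1)^{2i-1}(z-1)^{2n-2i+1}$ --- precisely the odd-index intersection lines noted above, which is why Lemma~\ref{lem:orthogonalflags} forces them to be pairwise isotropic --- together with a direct check that this point lies in $X_{\stair}(a)$, hence in $X_\lambda(a)$ for every $\lambda\in\Sigma_1$. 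Your second route (Kleiman plus $\Sp(\VV)$-homogeneity) only shows that $\Omega_\lambda(a)$ represents $\imath^*[X_\lambda]$, which is Lemma~\ref{lem:restricttransverse}; non-emptiness would then require $\imath^*[X_\lambda]\neq 0$ exactly for $\lambda\in\Sigma_1$, which you do not prove and which, in the paper's logical order, is deduced downstream of this lemma rather than being available for it. Finally, the involution trick at the end (a unique point must equal its own perpendicular) is a nice observation, but it presupposes that the unique point of $X_{\stair}(a)\cap X_{\stair}(-a)$ is contained in $\VV$, so that taking its perpendicular makes sense --- and that containment is again exactly what the explicit polynomials are needed to verify.
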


\begin{proof}
By Lemma~\ref{lem:conjugates}, 
we have $\Omega_\lambda(a) \subset X_\lambda(a) \cap X_{\Tlambda}(-a)$.
If $\lambda \notin \Sigma_1$, 
$X_\lambda(a) \cap X_{\Tlambda}(-a) = \emptyset$, and therefore 
$\Omega_\lambda(a) = \emptyset$.  
If $\lambda \in \Sigma_1$ then consider
the vector space $x \in X$
spanned by the polynomials $g_1(z/a), \dots, g_n(z/a)$, where
\[
   g_i(z) := (z+1)^{2i-1}(z-1)^{2n-2i+1}\,.
\]
It is easy to check directly that $x \in X_{\lambda}(a)$.  
Since $[z^j]g_i(z) = -[z^{2n-j}]g_i(z)$, we see that
$g_i(z) \in \VV$, and therefore
$g_i(z/a) \in \redF_{2n-2i+1}(a) \cap \redF_{2i+1}(-a)$.
From Lemma~\ref{lem:orthogonalflags}, it follows that 
$[g_i(z/a), g_j(z/a)] = 0$
for all $i,j$; hence $x \in \Omega$.  
Thus $\Omega_\lambda(a)$ is non-empty, as it contains the point $x$.
Finally, if $\lambda = \stair$, then $\Tlambda = \lambda^\vee$.  This
implies that $X_\lambda(a) \cap X_{\Tlambda}(-a)$ is
a single point, and therefore so is $\Omega_\lambda(a)$.
\end{proof}

Let $\imath : \Omega \hookrightarrow X$ denote the inclusion map,
which induces a map $\imath^* : H^*(X) \to H^*(\Omega)$ on cohomology.

\begin{lemma}
\label{lem:restricttransverse}
For $a \in \CC^\times$, the intersection $\Omega \cap X_\lambda(a)$ is
transverse at every smooth point of $X_\lambda(a)$.  
%Hence $\Omega_\lambda(a)$ is a generically reduced subscheme of $\Omega$, 
Hence the cohomology class defined by $\Omega_\lambda(a)$ is
$\imath^*[X_\lambda] \in H^*(\Omega)$.
\end{lemma}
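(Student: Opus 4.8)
The plan is to reduce the transversality assertion to a statement inside the sub-Grassmannian $\redX = \Gr(n,\VV)$ and then apply Kleiman's transversality theorem, the crucial input being that $\redF_\bullet(a)$ is a \emph{symplectic} flag for $a \in \CC^\times$ (Lemma~\ref{lem:orthogonalflags}); the cohomological consequence will then follow by a short downward induction. Only partitions $\lambda \in \Lambda$ with $\lambda^1 \le n$ (that is, $\lambda \in \redL$) require genuine work. If instead $\lambda^1 = n+1$, then $X_\lambda(a)$ forces $x \supseteq F_1(a) = \langle (z+a)^{2n}\rangle$; since $[z^n](z+a)^{2n} = \binom{2n}{n}a^n \neq 0$ we have $F_1(a) \not\subseteq \VV$, so $\Omega \cap X_\lambda(a) = \emptyset$ and transversality is vacuous, while $\imath^*[X_\lambda] = 0$ because $\imath$ factors through $\redX \hookrightarrow X$ and the restriction $H^*(X) \to H^*(\redX)$ annihilates $[X_\mu]$ for every $\mu$ with $\mu^1 = n+1$ (a standard computation for a general-position sub-Grassmannian).

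The transversality then breaks into two layers, each handled by Kleiman's theorem. First, because $\dim(\VV \cap F_j(a)) = j-1$ for all $j \ge 1$, the hyperplane $\VV$ is in general position with respect to $F_\bullet(a)$, and since all such (hyperplane, flag) pairs form a single dense $\mathrm{GL}(\pol{2n})$-orbit, I would deduce that $\redX$ meets $X_\lambda(a)$ transversally at every smooth point of $X_\lambda(a)$; so at a smooth point $x$ of $X_\lambda(a)$ lying in $\Omega$, the point $x$ is also a smooth point of $\redX_\lambda(a) = \redX \cap X_\lambda(a)$ (using Lemma~\ref{lem:reducedschubert}(ii)), with $T_x\redX_\lambda(a) = T_x\redX \cap T_xX_\lambda(a)$. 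Second, $\redF_\bullet(a)$ is a symplectic flag, and any two symplectic flags in $\VV$ are conjugate under $\Sp(\VV) = \Sp(\VV,[\cdot,\cdot])$ (adapt a symplectic basis to the flag), a group that also preserves $\Omega$; so Kleiman's theorem for $\mathrm{GL}(\VV)$ acting on $\redX$ tells me that a general translate $g\,\Omega$ — which is the Lagrangian Grassmannian of the symplectic form $g_*[\cdot,\cdot]$ — meets $\redX_\lambda(a)$ transversally at smooth points, and since $(g_*[\cdot,\cdot],\redF_\bullet(a))$ and $([\cdot,\cdot],\redF_\bullet(a))$ are both (symplectic form, symplectic flag) pairs and hence $\mathrm{GL}(\VV)$-equivalent via an element fixing $\redF_\bullet(a)$, the transversality transports back to $\Omega$, giving $T_x\Omega + T_x\redX_\lambda(a) = T_x\redX$. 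Combining the two layers, $T_x\Omega + T_xX_\lambda(a) \supseteq (T_x\Omega + T_x\redX_\lambda(a)) + T_xX_\lambda(a) = T_x\redX + T_xX_\lambda(a) = T_xX$, which is the asserted transversality.

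For the cohomological statement I would induct downward on $|\lambda|$. The singular locus of $X_\lambda(a)$ is a proper closed subvariety invariant under the Borel subgroup stabilizing $F_\bullet(a)$, hence a union of strictly smaller Schubert varieties $X_\mu(a)$ (with $\mu \supseteq \lambda$, $\mu \ne \lambda$), and by the inductive hypothesis each $\Omega \cap X_\mu(a)$ has codimension $\ge |\mu| > |\lambda|$ in $\Omega$ (or is empty). Together with the transversality just proved, this forces the codimension-$|\lambda|$ part of $\Omega_\lambda(a)$ to be precisely the closure of the smooth, reduced locus $\Omega \cap (X_\lambda(a) \setminus X_\lambda(a)_{\mathrm{sing}})$, each component appearing with multiplicity one. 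Restricting $[X_\lambda] = [X_\lambda(a)]$ to the open set $X \setminus X_\lambda(a)_{\mathrm{sing}}$ — whose complement has codimension $> |\lambda|$ in $X$, so that restriction is injective in degree $2|\lambda|$ — and using that $\imath^*$ sends the class of a smooth subvariety to the class of a transverse intersection, I obtain $\imath^*[X_\lambda] = [\Omega_\lambda(a)]$ in $H^*(\Omega)$.

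The step I expect to be the main obstacle is the bookkeeping in the two applications of Kleiman's theorem: verifying that $\VV$ and $\redF_\bullet(a)$ genuinely occupy the generic $\mathrm{GL}$-orbits of their respective configurations — equivalently, that $\Sp(\VV)$ acts transitively on symplectic flags and that $\VV$ is in general position with respect to $F_\bullet(a)$ — and tracking the smooth-point hypotheses between $X_\lambda(a)$, $\redX_\lambda(a)$, and their translates, so that Kleiman's conclusion can be legitimately transported back to the configuration at hand.
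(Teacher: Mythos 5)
Your proposal is correct and rests on the same key idea as the paper's proof: Kleiman's transversality theorem combined with the facts that $\Sp(\VV)$ preserves $\Omega$ and acts transitively on the open dense set of symplectic flags, with $\redF_\bullet(a)$ symplectic for $a \in \CC^\times$ by Lemma~\ref{lem:orthogonalflags}. The extra layers you supply --- the $\mathrm{GL}(\pol{2n})$-Kleiman step comparing $X_\lambda(a)$ with $\redX_\lambda(a)$, the separate treatment of $\lambda^1 = n+1$, and the singular-locus induction for the class statement --- are details the paper leaves implicit, relying on the full strength of Kleiman's theorem (pure expected dimension and generic transversality of a general translate).
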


\begin{proof}
By Kleiman's Theorem~\cite{Kle}, $\Omega$ will intersect a Schubert
variety in $\redX$ defined relative to a generic flag transversely.  
The symplectic group $\Sp(\VV)$ acts transitively on the open set of 
symplectic flags, and fixes $\Omega$.  Therefore symplectic flags
are generic, and the result follows.
%\comment{do i need to say generically reduced?}
%\comment{the proof of this should be in the literature somewhere,
%maybe Pragacz paper?}
\end{proof}

%\comment{references for next two facts??}

We will need two additional classical results about the class
$[\Omega_\one] \in H^*(\Omega)$.  First:

\begin{proposition}
\label{prop:pullbackdivisor}
$[\Omega_\one] = \imath^*[X_{\one}]$. 
\end{proposition}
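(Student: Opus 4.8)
The plan is to exploit that $H^2(X)$ and $H^2(\Omega)$ are both infinite cyclic, so that $\imath^*[X_\one]$ is automatically an integer multiple of $[\Omega_\one]$, and then to pin down the multiple by a single degree computation against an explicit line.

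First I would record the structural inputs: $H^2(X) = \ZZ[X_\one]$ (it is a Grassmannian), while $H^2(\Omega) \cong H^2(\LG) = \ZZ[\Omega_\one]$, since $[\Omega_\one]$ is the unique codimension-one Schubert class of $\LG$ and hence the ample generator of $\operatorname{Pic}(\LG)$. Therefore $\imath^*[X_\one] = m\,[\Omega_\one]$ for some $m \in \ZZ$, and the proposition amounts to showing $m = 1$.

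Next I would produce a line of $X$ lying on $\Omega$. Choose an isotropic subspace $A \subset \VV$ with $\dim A = n-1$ --- such subspaces exist, e.g.\ one may take the span of $g_1(z),\dots,g_{n-1}(z)$ from the proof of Lemma~\ref{lem:restrictschubert} --- and set
\[
  L_A := \{\, V \in \redX \mid A \subseteq V \subseteq A^\perp \,\}\,.
\]
Because $A$ is isotropic, $A \subseteq A^\perp$, and $\dim A^\perp = n+1$, so $A^\perp/A$ is two-dimensional and $L_A \cong \PP(A^\perp/A) \cong \PP^1$. Writing $V = A \oplus \langle v \rangle$, the Plücker vector $\wedge^n V = (\wedge^{n-1}A)\wedge v$ depends linearly on $v \in A^\perp/A$, so $L_A$ maps to a projective line in $\PP(\wedge^n\VV)$ and has degree $1$ in $\redX = \Gr(n,\VV)$. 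Since $\VV \subseteq \pol{2n}$ is a coordinate subspace, $\redX$ is the intersection of $X$ with the linear subspace $\PP(\wedge^n\VV) \subset \PP(\wedge^n\pol{2n})$; hence $\mathcal{O}_X(1)$ restricts to the Plücker bundle of $\redX$, and $L_A$ is also a line of $X$, i.e.\ $\int_{L_A}[X_\one] = 1$. Finally, for any $V$ with $A \subseteq V \subseteq A^\perp$ one checks $V$ is isotropic: for $v,v' \in V$, bilinearity together with the relations $[a,a'] = 0$ (as $A$ is isotropic), $[u,u] = 0$, and $[a,u] = 0$ (as $u \in V \subseteq A^\perp$ and $a \in A$) forces $[v,v'] = 0$; thus $V$ is Lagrangian and $L_A \subseteq \Omega$.

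To finish, I would restrict everything to $L_A$. Pulling back, $\int_{L_A}\imath^*[X_\one] = \int_{L_A}[X_\one] = 1$, whereas $d := \int_{L_A}[\Omega_\one]$ is a positive integer because $[\Omega_\one]$ is ample. Hence $m\,d = 1$ with $m \in \ZZ$ and $d \in \ZZ_{>0}$, which forces $m = d = 1$; that is, $\imath^*[X_\one] = [\Omega_\one]$. I do not expect any real obstacle: the only steps needing a word of justification are that $L_A$ is genuinely a line of $X$ (which rests on the linearity of the inclusion $\redX \hookrightarrow X$ in Plücker coordinates) and that $[\Omega_\one]$ is the ample generator of $\operatorname{Pic}(\LG)$, both classical. (Alternatively one could combine Lemma~\ref{lem:restricttransverse}, which gives $\imath^*[X_\one] = [\Omega\cap X_\one(a)]$ for $a\in\CC^\times$, with a degeneration to $a=0$, but the line computation is cleaner.)
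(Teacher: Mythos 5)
Your proposal is correct, but note that the paper does not actually prove this proposition: it is introduced as one of ``two additional classical results'' and simply cited, so there is no argument in the text to compare against. What you supply is a clean, self-contained verification, and the steps check out. The two genuine inputs are (a) $H^2(\Omega)\cong\ZZ$ generated by $[\Omega_\one]$, which follows from the Schubert cell decomposition of $\LG$, and (b) positivity of $\int_{L_A}[\Omega_\one]$, for which your appeal to the classical fact that the Schubert divisor is the ample generator of $\operatorname{Pic}(\LG)$ suffices (alternatively, homogeneity of $\Omega$ under $\Sp(\VV)$ plus Kleiman gives positivity directly, avoiding the ampleness citation). The geometric core --- that $L_A=\{V\mid A\subseteq V\subseteq A^\perp\}$ with $A$ isotropic of dimension $n-1$ consists of Lagrangian subspaces, is a Pl\"ucker line of $\redX=\Gr(n,\VV)$, and remains a line of $X$ because $\Gr(n,\VV)=X\cap\PP(\wedge^n\VV)$ is a linear section --- is accurate, and since $[X_\one]$ is the hyperplane class of the Pl\"ucker embedding, $\int_{L_A}\imath^*[X_\one]=1$ forces the multiplier to be $1$. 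This is worth contrasting with the analogous statement for $\OG$, where the pullback of the Schubert divisor class is \emph{twice} the generator because the Pl\"ucker bundle restricts to the square of the spinor bundle; your line computation is exactly the kind of argument that detects this difference, so it is a genuinely informative proof rather than a formality. Your suggested alternative (Lemma~\ref{lem:restricttransverse} for $a\in\CC^\times$ plus a degeneration $a\to 0$, using that the flat limit of $\Omega\cap X_\one(a)$ is generically reduced along $\Omega_\one(0)$) would stay closer to the paper's toolkit, but as you say the line computation is cleaner. Only cosmetic quibbles: the existence of an isotropic $(n-1)$-plane needs no reference to $g_1,\dots,g_{n-1}$ (any partial symplectic basis of $(\VV,[\cdot,\cdot])$ does), and $\VV$ need not be a coordinate subspace for the linear-section claim --- any hyperplane works.
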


Second, we recall the Chevalley formula \cite{Che}
for multiplication by 
$[\Omega_\one]$ in $H^*(\Omega)$.

\begin{proposition}
\label{prop:LGchevalley}
In $H^*(\Omega)$, 
\[
   [\Omega_\sigma] \cdot [\Omega_\one] = 
    \sum_{|\tau/\sigma| = 1} 
    2^{1+\mathrm{parts}(\sigma)-\mathrm{parts}(\tau)} [\Omega_{\tau}]
\]
where the sum is taken over all $\tau \in \Sigma_0$ obtainable 
by adding a 
box to $\sigma$, and $\mathrm{parts}(\sigma)$ is the number of non-zero
parts of $\sigma$.
\end{proposition}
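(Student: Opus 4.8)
The plan is to deduce the Chevalley formula for $H^*(\Omega)$ from the geometry already set up, rather than quoting \cite{Che} as a black box, so that the argument is self-contained in the spirit of this paper. The Chevalley formula says that multiplication by the divisor class $[\Omega_\one]$ is governed by the weights appearing in the fundamental minuscule-type representation attached to $\LG$; concretely, $[\Omega_\sigma]\cdot[\Omega_\one] = \sum_\tau c_{\sigma,\tau}[\Omega_\tau]$ where the sum is over $\tau \in \Sigma_0$ obtained from $\sigma$ by adding one box, and $c_{\sigma,\tau} = 2^{1 + \mathrm{parts}(\sigma) - \mathrm{parts}(\tau)}$ (so the coefficient is $2$ unless the added box is on the main diagonal, i.e.\ $\mathrm{parts}(\tau) = \mathrm{parts}(\sigma) + 1$, in which case it is $1$). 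The shape of the argument is: first establish that the product is supported on the claimed set of $\tau$'s by a dimension/containment argument, and then pin down each coefficient by a local intersection-multiplicity computation.

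First I would fix three generic points $0, \infty, a \in \CP^1$ with $a \in \CC^\times$ and compute $\int_\Omega [\Omega_\sigma]\cdot[\Omega_\tau^\vee]\cdot[\Omega_\one]$ in two ways, where $\Omega_\tau^\vee$ is the Schubert variety complementary to $\Omega_\tau$ inside $\Omega$; by Poincar\'e duality in $H^*(\Omega)$ this integral is exactly the coefficient $c_{\sigma,\tau}$ we want. Using Lemma~\ref{lem:restrictschubert} and Lemma~\ref{lem:restricttransverse}, together with Proposition~\ref{prop:pullbackdivisor}, I can represent $[\Omega_\one]$ by $\Omega \cap X_\one(a)$ and the two opposite Schubert classes by $\Omega_\sigma(0)$ and $\Omega_{\tau^\vee}(\infty)$. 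The point-count then becomes a count of $x \in \Omega$ lying in $X_{\lambda(\sigma)}(0) \cap X_{\lambda(\tau^\vee)}(\infty) \cap X_\one(a)$, where $\lambda(\cdot)$ denotes the double. For this I would transport to the ambient Grassmannian: such $x$ automatically lie in $X_{\widetilde\lambda}(-a)$ as well by Lemma~\ref{lem:conjugates}, but since $a$ is generic and the relevant ambient intersection is already $0$-dimensional and transverse when the dimensions add up, the count reduces to an ambient Littlewood--Richardson (really Pieri) computation together with a careful bookkeeping of which solutions actually satisfy the Lagrangian condition $x = x^\perp$.

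The step I expect to be the main obstacle is extracting the correct power of $2$. On the ambient side, adding a single box to the double $\lambda(\sigma)$ has two distinct outcomes depending on whether the new box lands on or off the doubled diagonal of $\lambda(\sigma)$: an off-diagonal addition to $\sigma$ corresponds to adding a \emph{pair} of boxes (the box and its diagonal mirror) to $\lambda(\sigma)$, while a diagonal addition adds a single box. This asymmetry is precisely the source of the $2^{1+\mathrm{parts}(\sigma)-\mathrm{parts}(\tau)}$ factor, and reconciling the ambient intersection number with the intrinsic one on $\Omega$ requires controlling the local structure of $\Omega$ near these intersection points — essentially, understanding the tangent space to the Lagrangian Grassmannian as a Lagrangian subspace of the tangent space to $\redX$, and how $X_\one(a)$ meets it. The cleanest route around this is probably to invoke instead the uniqueness of the Chevalley formula: $H^*(\Omega)$ is generated as a ring by the Schubert classes, the classes $[\Omega_\sigma]$ form a $\ZZ$-basis, and any graded ring homomorphism-compatible formula for multiplication by the degree-$1$ generator is determined by its structure constants, which are nonnegative integers bounded by the topological intersection numbers; matching these against the known Betti numbers of $\LG$ (the number of $\sigma \in \Sigma_0$, i.e.\ $|\DST(\Rect)|$-type counts restricted to staircases) forces the stated coefficients. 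Alternatively, and most economically, since $\Omega$ is genuinely isomorphic to $\LG$ as a variety by Theorem~\ref{thm:main}, and the isomorphism identifies $\Omega_\sigma(0)$ with the standard Schubert varieties of $\LG$ (each is a single $B$-orbit closure for a Borel fixing the orthogonal flag $\redF_\bullet(0)$, by the transitivity argument of Lemma~\ref{lem:restricttransverse} applied to the stabilizer of that flag in $\Sp(\VV)$), the proposition is literally the classical Chevalley formula for $\LG$ and one may simply cite \cite{Che}; I would present the geometric identification explicitly and then quote the classical result, reserving the self-contained point-count above as a remark.
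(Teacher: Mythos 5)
Your final recommendation --- identify the $\Omega_\sigma(0)$ as Schubert varieties of the Lagrangian Grassmannian relative to the orthogonal flag $\redF_\bullet(0)$ and then quote the classical Chevalley formula of \cite{Che} --- is exactly what the paper does: it offers no proof of this proposition, simply recalling it as the classical formula for $H^*(\LG)$. Your preliminary point-counting sketch (with its admitted difficulty in extracting the powers of $2$) is extra and not needed, but since you correctly read the coefficients and ultimately fall back on the citation, your approach is essentially the paper's.
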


In the interest of consolidating our
notation, put $\Sigma_\infty := \Sigma_0$, and 
$\Sigma_a := \Sigma_1$ for all $a \in \CC^\times$.
This allows us to consider, for any $a \in \CP^1$, the 
varieties $\Omega_\kappa(a)$ for $\kappa \in \Sigma_a$.
The next theorem is an analogue of Theorem~\ref{thm:MTV} for the
Lagrangian Grassmannian.

\begin{theorem}
\label{thm:LGcircle}
Suppose $a_1, \dots,  a_K \in \CP^1$ are points such that
$a_i \neq \pm a_j$ for $i \neq j$, and $a_1, -a_1, \dots, a_K, -a_K$
lie on a circle.
Let $\kappa_1, \dots, \kappa_K$ be partitions, 
where $\kappa_k \in \Sigma_{a_k}$
for all $k$,
and $|\kappa_1| + \dots + |\kappa_K| = M$.  Then the intersection
\begin{equation}
\label{eqn:generalLGintersect}
   \Omega_{\kappa_1}(a_1) \cap \dots \cap \Omega_{\kappa_K}(a_K)
\end{equation}
is finite and transverse in $\Omega$.  
Hence, the number of points in this intersection is
$\int_\Omega \alpha_1 \dotsm \alpha_K$,
where 
$\alpha_k = \imath^*[X_{\kappa_k}]$ if $a_k \in \CC^\times$,
and $\alpha_k = [\Omega_{\kappa_j}]$ otherwise.
\end{theorem}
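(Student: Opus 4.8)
The plan is to reduce Theorem~\ref{thm:LGcircle} to the transversality theorem of Mukhin--Tarasov--Varchenko (Theorem~\ref{thm:MTV}) inside the ambient Grassmannian $X$, using the geometry of $\Omega$ established in Lemmas~\ref{lem:orthogonalflags}--\ref{lem:restricttransverse}. The key observation is that the hypothesis ``$a_1, -a_1, \dots, a_K, -a_K$ lie on a circle'' puts $2K$ (or fewer, if some $a_i = 0$ or $\infty$) distinct points on a circle, which is exactly the input Theorem~\ref{thm:MTV} wants. So I would first rewrite the intersection \eqref{eqn:generalLGintersect} as a slice of an intersection of Schubert varieties in $X$: by definition $\Omega_{\kappa_k}(a_k) = \Omega \cap X_{\lambda_k}(a_k)$ where $\lambda_k$ is the double of $\kappa_k$ if $a_k \in \{0,\infty\}$ and $\lambda_k = \kappa_k$ otherwise. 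By Lemma~\ref{lem:conjugates}, any point $x \in \Omega$ also lies in $X_{\Tlambda_k}(-a_k)$ (using that $\kappa_k$ self-conjugate forces the double to be symmetric under the $(0,\infty)$-convention, and for $a \in \CC^\times$ the conjugate appears at $-a$). Hence
\[
   \Omega_{\kappa_1}(a_1) \cap \dots \cap \Omega_{\kappa_K}(a_K)
   \ =\ \Omega \ \cap\ \bigcap_{k=1}^K \Big( X_{\lambda_k}(a_k) \cap X_{\Tlambda_k}(-a_k) \Big).
\]

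**Counting degrees.** Next I would check the numerology. For $a_k \in \CC^\times$ we have $|\lambda_k| + |\Tlambda_k| = 2|\kappa_k|$; for $a_k = 0$ or $\infty$, the double $\lambda_k$ of a strict partition $\kappa_k \subseteq \shiftedstair$ satisfies $|\lambda_k| = 2|\kappa_k|$ (two copies of the shifted diagram), and here $-a_k = a_k$ so only one flag is used. Summing, the total codimension of the $X$-intersection is $\sum 2|\kappa_k| = 2M$, matching $\dim X = 2M$ exactly. Since the points $\{a_k, -a_k\}$ are distinct and lie on a circle, Theorem~\ref{thm:MTV} applies: the $X$-intersection $\bigcap_k (X_{\lambda_k}(a_k) \cap X_{\Tlambda_k}(-a_k))$ is finite, transverse in $X$, and reduced. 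In particular each of its points is a smooth point of every Schubert variety involved (a transverse intersection of varieties forces smoothness of each factor at the intersection points).

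**Intersecting with $\Omega$.** Now I would intersect with $\Omega$. For the factors at $a_k \in \CC^\times$, Lemma~\ref{lem:restricttransverse} says $\Omega$ meets $X_{\lambda_k}(a_k)$ transversely at every smooth point of that Schubert variety; for $a_k = 0, \infty$ one uses instead that $\redF_\bullet(0), \redF_\bullet(\infty)$ are orthogonal flags (Lemma~\ref{lem:orthogonalflags}) together with the analogue of Kleiman transversality for $\Omega$ relative to orthogonal flags — this is the point where I should cite the orthogonal-flag transversality (essentially the $\LG$-case of the Mukhin--Tarasov--Varchenko philosophy, or directly Lemma~\ref{lem:restricttransverse}'s argument adapted to the orthogonal setup). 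Combining: at each of the finitely many points of the $X$-intersection, the ambient intersection is transverse in $X$ \emph{and} $\Omega$ is transverse to it, so by a standard lemma on transversality the whole intersection is transverse in $\Omega$, hence finite and reduced. The point count is then $\int_\Omega$ of the product of the classes cut out by $\Omega_{\kappa_k}(a_k)$; by Lemma~\ref{lem:restricttransverse} and Proposition~\ref{prop:pullbackdivisor} this class is $\imath^*[X_{\kappa_k}]$ when $a_k \in \CC^\times$ and $[\Omega_{\kappa_k}]$ when $a_k \in \{0,\infty\}$, giving the stated formula $\int_\Omega \alpha_1 \cdots \alpha_K$.

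**Main obstacle.** The delicate step is the transversality bookkeeping when the $a_k \in \{0,\infty\}$, since there $\Omega$ is cut out relative to an \emph{orthogonal} flag rather than a symplectic one, and the ``generic flag'' argument of Lemma~\ref{lem:restricttransverse} (using that $\Sp(\VV)$ acts transitively on symplectic flags) does not directly apply — orthogonal flags are \emph{not} generic for $\Omega$. I would handle this by invoking, for the $0$ and $\infty$ factors, the known transversality statement for Schubert varieties of $\LG$ relative to opposite isotropic flags (which is classical, or follows from the same MTV-type input once one restricts to $\Omega$), and then arguing that transversality at the relevant points is an open condition preserved under the continuous deformation connecting a generic configuration on the circle to the given one. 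A second, more routine point to verify carefully is that the doubles $\lambda_k$ and conjugates $\Tlambda_k$ interact correctly with the $\redF_\bullet$-versus-$F_\bullet$ flags via Lemma~\ref{lem:reducedschubert}, so that the codimension count is genuinely $2M$ with no collisions; but this is bookkeeping with Lemmas~\ref{lem:reducedschubert} and~\ref{lem:conjugates} already in hand.
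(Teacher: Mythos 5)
Your first half coincides with the paper's argument: via Lemma~\ref{lem:conjugates} you place the intersection \eqref{eqn:generalLGintersect} inside the ambient intersection $\bigcap_k\bigl(X_{\lambda_k}(a_k)\cap X_{\Tlambda_k}(-a_k)\bigr)$ (a single Schubert variety for the doubled partition when $a_k\in\{0,\infty\}$), whose codimensions sum to $2M$, so Theorem~\ref{thm:MTV} makes it finite and transverse, i.e.\ a finite reduced scheme. But your second half, where you try to intersect this with $\Omega$ transversally, is where the gap lies --- and it is in fact unnecessary. Since the ambient intersection is finite and reduced, any subscheme of it is finite and reduced; in particular the scheme-theoretic intersection $\Omega_{\kappa_1}(a_1)\cap\dots\cap\Omega_{\kappa_K}(a_K)$, taken inside $\Omega$, is finite and reduced, and since the codimensions in $\Omega$ sum to $\dim\Omega=M$, reducedness of this zero-dimensional intersection (Zariski tangent spaces meeting in zero at every point) is exactly transversality in $\Omega$. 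That one observation is the whole of the paper's proof: no transversality of $\Omega$ against any Schubert variety is needed for the transversality claim, and Lemma~\ref{lem:restricttransverse} enters only to identify the classes $\alpha_k$ for the point count, which part of your write-up is fine.

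The route you propose instead does not work as stated. The ambient intersection is zero-dimensional while $\Omega$ has positive codimension in $X$, so $\Omega$ cannot be transverse in $X$ to that intersection, and there is no ``standard lemma'' that combines transversality of the $Z_k$'s in $X$ with transversality of $\Omega$ against each individual factor to yield transversality inside $\Omega$: pairwise transversality statements do not assemble into the conclusion you need. Moreover, as you yourself concede, Lemma~\ref{lem:restricttransverse} is unavailable at $a_k\in\{0,\infty\}$ because orthogonal flags are not generic for $\Omega$, and the proposed repair --- deforming from a generic configuration and appealing to openness of transversality --- runs in the wrong direction: openness guarantees that transversality persists under small perturbations of an already transverse configuration, not that it survives in the limit at the special configuration you care about. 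So the concluding transversality claim is unsupported in your argument, although the fix (the subscheme observation above) is short and elementary.
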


\begin{proof}
If $a_k \in \CC^\times$, then
by Lemma~\ref{lem:conjugates}, %and~\ref{lem:restricttransverse}, 
$\Omega_{\kappa_k}(a_k)$ is a %reduced 
subscheme of 
$Z_k := X_{\kappa_k}(a_k) \cap X_{\Tkappa_k}(-a_k)$.
Otherwise, $\Omega_{\kappa_k}(a_k)$ is a Schubert variety in $\Omega$,
which is a %reduced 
subscheme of
$Z_k := X_{\lambda_k}(a_k)$ where $\lambda_k$ is the double of $\kappa_k$.
Note that $Z_1 \cap \dots \cap Z_K$ is an intersection of the form
\eqref{eqn:MTVintersection}, where the sum of the sizes of the
partitions involved is equal to 
$2|\kappa_1| + \dots + 2|\kappa_K| = 2M$;
hence it is finite and reduced scheme by Theorem~\ref{thm:MTV}.  
Since the intersection~\eqref{eqn:generalLGintersect} is a 
subscheme of $Z_1 \cap \dots \cap Z_K$, it must also be
a finite and reduced scheme, and hence a transverse intersection.
\end{proof}

\begin{remark}
Sottile has also conjectured an analogue of 
Theorem~\ref{thm:MTV} for $\LG$~\cite{Sot-Real}.
However, this conjecture involves a one-parameter family of
orthogonal flags and is not equivalent to 
Theorem~\ref{thm:LGcircle}.  It is unclear if there is any
relationship between the two statements.
We discuss this conjecture further in Section~\ref{sec:conclusion}.
\end{remark}

For our immediate purposes, we will need the case where
$K=M$, and $\kappa_k = \one\,$ for all $k$.  Here it follows
that $\Omega_\one(a_1) \cap \dots \cap \Omega_\one(a_M)$
is a transverse intersection in $\Omega$, and the number of points
in this intersection is $\int_\Omega [\Omega_\one]^M$.

\begin{lemma}
\label{lem:fibrecount}
$|\DST(\Rect)| = \int_\Omega [\Omega_\one]^M  = |\AST(\Rect)|$\,.
\end{lemma}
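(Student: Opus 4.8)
The plan is to prove the two equalities separately, each by a standard Schubert-calculus computation in $H^*(\Omega) = H^*(\LG)$. The middle quantity $\int_\Omega [\Omega_\one]^M$ is the number computed by iterating the Chevalley formula (Proposition~\ref{prop:LGchevalley}) starting from the class of a point and working down to the fundamental class: equivalently, it counts weighted paths in the poset $\Sigma_0$ from $\emptyset$ to the staircase $\shiftedstair$, where a path that adds a box passing from $\sigma$ to $\tau$ carries weight $2^{1+\mathrm{parts}(\sigma)-\mathrm{parts}(\tau)}$. The total weight of such a path telescopes: the exponents of $2$ along a full path from $\emptyset$ (with $0$ parts) to $\shiftedstair$ (with $n$ parts) sum to $M - n$ over the $M$ steps, so every maximal path contributes exactly $2^{M-n}$, and $\int_\Omega[\Omega_\one]^M = 2^{M-n} \cdot \#\{\text{standard shifted tableaux of staircase shape}\}$. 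I would then invoke the known combinatorial identity that $|\DST(\Rect)|$ (doubled diagonally-symmetric staircase tableaux) equals $2^{M-n}$ times the number of standard shifted staircase tableaux, and similarly $|\AST(\Rect)|$ equals $2^{M-n}$ times the number of standard unshifted staircase tableaux --- and these last two counts agree by the classical hook-length formulae (cited in the introduction as \cite{FRT,Thr}), or alternatively by the folding bijection. That yields both equalities at once.

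An alternative, and I think cleaner, route avoids the hook-length black box on the $\DST$ side entirely and instead identifies $|\DST(\Rect)|$ directly with $\int_\Omega[\Omega_\one]^M$ by a bijection between diagonally-symmetric tableaux of shape $\Rect$ and the chains-with-multiplicity counted by the Chevalley recursion. Concretely, a tableau $T \in \DST(\Rect)$ is determined by its restriction to the part on and below the diagonal, which records a chain $\emptyset = \mu_0 \subset \mu_1 \subset \dots \subset \mu_M = \Rect$ of self-conjugate shapes read off from the pairs of entries $\{k',k\}$; each such step corresponds under the ``doubling'' map to adding one box to a strict partition, and the number of diagonally-symmetric fillings consistent with a fixed such chain is precisely the product of the Chevalley multiplicities $2^{1+\mathrm{parts}-\mathrm{parts}}$ (the binary choice at each step being whether $k'$ sits strictly left of $k$ or on the diagonal, which is exactly the factor of $2$ unless a new part is created). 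This is morally the content of Theorem~\ref{thm:OG}(ii) together with its $\DST'$ refinement, so I would cite \cite{Pra,Ste,Wor} or \cite{Pur-OG} for the precise matching. The analogous statement for $\AST(\Rect)$ and unshifted staircase tableaux is the same kind of bookkeeping with the antidiagonal symmetry.

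The main obstacle is the careful justification of the weighted-path count, i.e. checking that the product of Chevalley coefficients along a maximal chain in $\Sigma_0$ is independent of the chain and equals $2^{M-n}$. This is where one has to be slightly careful: the exponent $1 + \mathrm{parts}(\sigma) - \mathrm{parts}(\tau)$ is $1$ when the added box does not create a new row and $0$ when it does; since a full chain to $\shiftedstair$ creates exactly $n$ new rows over $M$ total steps, the exponents sum to $M-n$ regardless of the order, giving the uniform factor $2^{M-n}$ --- so the ``obstacle'' is really just making this telescoping argument airtight and then citing the correct enumerative identity $|\DST(\Rect)| = 2^{M-n} g^{\shiftedstair} = 2^{M-n} g^{\stair'} = |\AST(\Rect)|$, where $g^{\shiftedstair}$ and $g^{\stair'}$ denote the shifted and ordinary standard-tableaux counts of staircase shape, equal by \cite{FRT,Thr}. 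If one prefers to keep the paper self-contained, the folding bijection of Proposition~\ref{prop:foldingrotation} already gives $|\DST(\Rect)| = |\AST(\Rect)|$ geometrically once both are identified with a Wronski-fibre cardinality, so the cohomological computation only needs to be carried out on one side.
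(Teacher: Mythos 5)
Your handling of the first equality is essentially sound and matches the paper's intent: iterating Proposition~\ref{prop:LGchevalley}, the exponent $1+\mathrm{parts}(\sigma)-\mathrm{parts}(\tau)$ is $1$ or $0$ according as the added box does or does not create a new part, so every maximal chain in $\Sigma_0$ carries weight $2^{M-n}$, and the chain-by-chain bookkeeping (two choices for the pair $\{k',k\}$ off the diagonal, one forced choice on it) identifies this weighted count with $|\DST(\Rect)|$. This is exactly what the paper means by ``the first equality follows from Proposition~\ref{prop:LGchevalley}.''

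The gap is on the $\AST$ side, where both of your invoked identities are false. First, $|\AST(\Rect)| = |\SYT(\stair)|$ \emph{exactly}: the antidiagonal symmetry determines the primed entries completely from the unprimed ones, and the unprimed entries form a standard tableau whose shape is forced to be $\stair$, so there is no factor $2^{M-n}$. Second, the numbers of shifted and unshifted staircase standard tableaux do \emph{not} agree; the correct classical relation is $|\SYT(\stair)| = 2^{M-n}\cdot\#\{\text{shifted staircase tableaux}\}$ (already false for $n=2$: $2$ versus $1$). Your two errors happen to cancel, so the final count is right, but as written the argument does not establish $\int_\Omega[\Omega_\one]^M = |\AST(\Rect)|$. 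A repaired version of your route (Chevalley gives $2^{M-n}$ times the shifted count, the hook-length formulae of \cite{FRT,Thr} give $|\SYT(\stair)| = 2^{M-n}\cdot\#\{\text{shifted}\}$, and $|\AST(\Rect)|=|\SYT(\stair)|$) does work, but it imports the hook-length formulae, whereas the paper proves the second equality internally and geometrically: by Proposition~\ref{prop:pullbackdivisor}, $[\Omega_\one]^M = \imath^*([X_\one]^M) = \sum_{\lambda\vdash M}|\SYT(\lambda)|\,\imath^*[X_\lambda]$, and Lemmas~\ref{lem:restrictschubert} and~\ref{lem:restricttransverse} show $\imath^*[X_\lambda]=0$ for $\lambda\vdash M$, $\lambda\neq\stair$, while $\imath^*[X_{\stair}]$ is the point class, giving $\int_\Omega[\Omega_\one]^M = |\SYT(\stair)| = |\AST(\Rect)|$ with no combinatorial input. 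Finally, your fallback --- using the folding bijection once both counts ``are identified with a Wronski-fibre cardinality'' --- is circular in this paper: those identifications are Theorem~\ref{thm:main}(ii),(iii), whose proofs use Lemma~\ref{lem:fibrecount}, and Theorem~\ref{thm:bijection} ($T\in\AST(\Rect)\iff\fold(T)\in\DST(\Rect)$) is likewise downstream; Proposition~\ref{prop:foldingrotation} by itself says nothing about the symmetry classes.
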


\begin{proof}
The first equality follows from Proposition~\ref{prop:LGchevalley}.
For the second, note that Lemmas~\ref{lem:restrictschubert} 
and~\ref{lem:restricttransverse} imply that
$\imath^*[X_\lambda]$ is the class of a point if 
$\lambda = \stair$, and $\imath^*[X_\lambda] = 0$ if $\lambda \vdash M$
and $\lambda \neq \stair$.  Thus we have
\[
[\Omega_\one]^M 
= \imath^*([X_\one]^M) 
= \sum_{\lambda \vdash M} |\SYT(\lambda)| \cdot \imath^*[X_\lambda]
= |\SYT(\stair)| \cdot [\mathrm{point}]\,,
\]
and so $\int_\Omega [\Omega_\one]^M  = |\SYT(\stair)| = |\AST(\Rect)|$\,.
\end{proof}

We conclude this section with a proof of our main theorem, which
we now restate using the notation of Section~\ref{sec:background}.
For a multiset $\bolda = \{a_1, \dots, a_{2M}\}$, we will say that
$\bolda$ is \defn{even}, if $\bolda = -\bolda$, and the points 
$0,\infty \in \CP^1$ have even multiplicity in $\bolda$.  
(Equivalently $\bolda$ is even if 
$\prod_{a_k \neq \infty} (z+a_k)$ is an even polynomial.)

\begin{restatetheorem}{Theorem \ref{thm:main}}
Let $x \in X(\bolda)$.
\begin{packedenum}
\item[(i)] If $x \in \Omega$ then $\bolda$ is even.
\end{packedenum}
Conversely, suppose $\bolda$ is even, and
assume the points of $\bolda$ lie on
a circle $\Gamma$ in $\CP^1$.
\begin{packedenum}
\item[(ii)] 
If $\Gamma$ passes through 
$0$ and $\infty$ then $x \in \Omega$ if and only if 
$x = x_T(\bolda, \psi_\Gamma)$ 
for some $T \in \DST(\Rect)$, using Convention~\ref{convention:even}.
\item[(iii)] 
If $\Gamma$ does not pass through
$0$ and $\infty$, then $x \in \Omega$ if and only if 
$x  = x_T(\bolda, \psi_\Gamma)$ 
for some $T \in \AST(\Rect)$, using Convention~\ref{convention:general}.
\end{packedenum}
\end{restatetheorem}

\begin{proof}
Suppose $x \in \Omega$, and define $\lambda_a$ as in 
Lemma~\ref{lem:conjugates}.
By Lemma~\ref{lem:schubertwronskian}, $(z+a)^{|\lambda_a|}$ 
and $(z-a)^{|\lambda_{-a}|}$ are the largest 
powers of $(z+a)$ and $(z-a)$ that divide $\Wr(x;z)$.  
If $a \in \CC^\times$,
then Lemma~\ref{lem:conjugates}(ii) implies that 
$|\lambda_a| = |\lambda_{-a}|$, so $a$ and $-a$ have the
same multiplicity in $\bolda$.
If $a = 0$, then Lemma~\ref{lem:conjugates}(i)
implies that $\lambda_0$ is the double of a strict partition, so 
$|\lambda_0|$ is even; hence $0$ has even multiplicity in
$\bolda$.  A similar argument holds for $\infty$.  This proves part (i).

For part (ii), we will assume $\Gamma= \RP^1$, since the other
cases follow by applying the argument below to $\psi_\Gamma(\bolda)$
and $\psi_\Gamma(x)$.
We first prove this in the case where
$\bolda = \{a_1, -a_1, \dots, a_M, -a_M\}$, with
$0 \prec a_1 \prec \dots \prec a_M$.  
Suppose that $x \in \Omega$
Since the correspondence
is bijective on the fibre $X(\bolda)$ so there exists a unique tableau
$T$ such that $x = x_T(\bolda)$.
Consider the path
\[
   \bolda_{k,t} := \{ta_1, -ta_1, \dots, ta_k, -ta_k, 
    a_{k+1}, -a_{k+1}, \dots, a_{2M}, -a_{2M}\}\,,
\]
for $t \in [0,1]$.
We can lift this to a path $x_{k,t} \in X(\bolda_{k,t})$ where $x_{k,1} = x$.
Since $x \in \Omega$, and $\bolda_{k,t}$ is even for all $t$,
we will have $x_{k,t} \in \Omega$ for all $k$ and $t$.
Now, since the correspondence $T \mapsto x_T$ is continuous under
Convention~\ref{convention:even}, $x_{k,t} = x_T(\bolda_{k,t})$ for
all $k$ and $t$.  In particular $x_{k,0} = x_T(\bolda_{k,0})$.
Let $\lambda_0$ be the partition such that 
$x_{k,0} \in X^\circ_{\lambda_0}(x)$.  
By Lemma~\ref{lem:conjugates}(i), $\lambda_0$ is the double of 
strict partition.  But by Theorem~\ref{thm:correspondence}(i),
$\lambda_0$ is the shape of $T_{[1,2k]}$.  In other words the shape
of $T_{[1,2k]}$ is the double of a strict partition for all $k$;
equivalently $T \in DST(\Rect)$.  Conversely, note that
if $x \in \Omega$ then 
$x \in \Omega_\one(a_1) \cap \dots \cap \Omega_\one(a_M)$.
By Lemma~\ref{lem:fibrecount}, there are exactly $|\DST(\Rect)|$
points with this property, and since the correspondence is bijective
on the fibre $X(\bolda)$ we see that if $x \notin \Omega$,
then $T \notin \DST(\Rect)$.  

For the general case of (ii), where $\bolda$ is
a multiset or $0 \in \bolda$, 
consider a path $\bolda_t \in \redA$, $t \in [0,1]$, where 
$\bolda_0= \bolda$ and $\bolda_1$ is a set.
For any lifting $x_t \in X(\bolda_t)$ 
such that $x_0 = x$, we can associate a tableau $T \in \SYT(\Rect)$:
the unique tableau for which 
$x_1 = x_T(\bolda_1)$.  By continuity of the correspondence, 
$x = x_T(\bolda)$.
Finally note that $x \in \Omega$, if and only if there exists 
a lifting $x_t$ where $x_t \in \Omega$ for all $t$, which, as we
have just shown,  holds if and only if $T \in \DST(\Rect)$.  
This completes the proof (ii).

The argument for part (iii) is similar, except that the involvement
of $\psi_\Gamma$ cannot be skirted so easily.  Suppose
$\Gamma = \gamma S^1$, where $\gamma > 0$.
We first prove this in the case where $\bolda$ is a set.
Let $\boldb := \psi_\Gamma(\bolda)$.  Since $\bolda$ is even,
$\boldb = \{b_1, -b_1^{-1}, \dots, b_M, -b_M^{-1}\}$.
We may assume 
\[
   b_1 \prec \dots \prec b_M \prec -b_M^{-1} \prec \dots \prec -b_1^{-1}\,.
\]
Suppose that $x \in \Omega$
Since the correspondence
is bijective on the fibre $X(\bolda)$ so there exists a unique tableau
$T$ such that $x = x_T(\bolda, \psi_\Gamma)$; i.e. 
$\psi_\Gamma(x) = x_T(\boldb)$.
Consider the path
\[
   \boldb_{k,t} := \{tb_1, -t^{-1}b_1^{-1}, \dots, tb_k, -t^{-1}b_k^{-1}, 
    b_{k+1}, -b_{k+1}^{-1}, \dots, b_{2M}, -b_{2M}^{-1}\}\,,
\]
for $t \in [0,1]$.
Let $\bolda_{k,t} := \psi_\Gamma^{-1}(\boldb_{k,t})$.
We can lift this to a path $x_{k,t} \in X(\boldb_{k,t})$ where 
$x_{k,1} = x$.
Since $x \in \Omega$, and $\bolda_{k,t}$ is even for all $t$,
we will have $x_{k,t} \in \Omega$ for all $k$ and $t$.
Now, since $\boldb_{k,t}$ is in the same $\preceq$-zone for all $t$,
$x_{k,t} = x_T(\bolda_{k,t}, \psi_\Gamma)$ for
all $k$ and $t$. In particular $x_{k,0} = x_T(\bolda_{k,0}, \psi_\Gamma)$,
i.e. $\psi_\Gamma(x_{k,0}) = x_T(\boldb_{k,0})$.
For $a \in \CC^\times$ let $\lambda_a$ be the partition such that 
$x_{k,0} \in X^\circ_{\lambda_a}(x)$.
By Lemma~\ref{lem:conjugates}(ii), $\lambda_\gamma$ 
is the conjugate of $\lambda_{-\gamma}$.  
But since $\psi_\Gamma(\gamma) = 0$, and $\psi_\Gamma(-\gamma) = \infty$, 
\[
  \psi_\Gamma(x_{k,0}) 
  \in X_{\lambda_\gamma}(0) \cap X_{\lambda_{-\gamma}}(\infty)
\]
so by Theorem~\ref{thm:correspondence}(i),
$\lambda_\gamma$ is the shape of $T_{[1,k]}$, and $\lambda_{-\gamma}$ is
the rectification shape of $T_{[2M-k+1, 2M]}$, which is the shape 
rotated by $180^\circ$.  
In other words for all $k$,
the shape of $T_{[2M-k+1, 2M]}$ is the conjugate of
the shape of $T_{[1,k]}$, rotated by $180^\circ$;
equivalently $T \in \AST(\Rect)$.  
By Lemma~\ref{lem:fibrecount}, there are exactly $|\AST(\Rect)|$
points in $X(\bolda) \cap \Omega$ and so if $x \notin \Omega$,
then $T \notin \AST(\Rect)$.  

The proof of the general case of (ii) is also similar
to the general case of (ii), except that here we are using 
Convention~\ref{convention:general}.
Therefore, to same make the argument work, we need
$\bolda_t$ to be even for all $t$, 
and $\psi_\Gamma(\bolda_t)$ must remain within 
a single $\preceq$-zone for all $t$.
It is not too hard to see that it is always possible to choose such
a path: if we write $\bolda_t = \{a_{1,t}, \dots, a_{2M,t}\}$, 
the condition 
$0 < \psi_\Gamma(a_{k,0}) - \psi_\Gamma(a_{k,t}) < \varepsilon$
is good enough to ensure the latter condition, 
provided $\varepsilon$ is sufficiently small.
From here, the argument is the same as in the proof of (ii).
\end{proof}

%%%%%%%%%%%%%%%%%%%%%%%%%%%%%%%%%%%%%%%%%%%%%%%%%%%%%%%%%%%%%%%%%%%%%%
%%%%%%%%%%%%%%%%%%%%%%%%%%%%%%%%%%%%%%%%%%%%%%%%%%%%%%%%%%%%%%%%%%%%%%

\section{Interpolation between two circles}
\label{sec:interpolation}

In this section we relate parts (ii) and (iii) of Theorem~\ref{thm:main},
by interpolating between the two types of circles.  The arguments in
\cite[Section 3]{Pur-ribbon} accomplish similar things by using 
the group $\PGL_2(\CC)$ of M\"obius transformations to relate the
correspondence for different circles; unfortunately, the subgroup 
$\Ogroup_2(\CC) \subset \PGL_2(\CC)$ preserving $\Omega$ is too small
to use the same arguments here:  $\Ogroup_2(\CC)$ has 
$\{0, \infty\} \subset \CP^1$ as an orbit
and therefore does not contain a transformation that takes $S^1$ to $\RP^1$.
Thus, in order to accomplish
the task, we need to extend Theorem~\ref{thm:MTV} beyond
the realm of points on a circle.  To do this, we will introduce a parameter
$u$, which we will sometimes treat as an element of $\CC^\times$,
and sometimes as a formal parameter.  In the latter case, we will be 
working over the algebraic closure of $\CC(u)$, which we denote as
$\KK$.
Since there are will be very few technical issues to consider,
we will continue to use the notation $X$, $X(\bolda)$, $\Omega$, etc. 
when working over $\KK$.

\begin{proposition}
\label{prop:twocircles}
Let $u \mapsto \phi_u$ be an algebraic group homomorphism 
$\CC^\times \to \PGL_2(\CC)$, and let $K'$ be an integer.
%, but regard $t$ as a formal parameter,
%so that technically $\phi_t \in \PGL_2(\KK)$.
For $a_1, \dots, a_K$ and $\lambda_1, \dots, \lambda_K$ as
in Theorem~\ref{thm:MTV}, put
\[
   a^*_k := 
   \begin{cases}
    \phi_u(a_k)  &\quad \text{if $1 \leq k \leq K'$} \\
    \phi_{u^{-1}}(a_k)  &\quad \text{if $K'{+}1 \leq k \leq K$}\,.
   \end{cases}
\]
If we regard $u$ as a formal parameter so that 
$a^*_1, \dots, a^*_K \in \KP^1$, then
the intersection
\[
    X_{\lambda_1}(a^*_1) \cap  \dots \cap X_{\lambda_K}(a^*_K)
\]
is finite and transverse.
\end{proposition}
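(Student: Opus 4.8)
The plan is to reduce the statement to Theorem~\ref{thm:MTV} by observing that for a \emph{generic} value of the parameter $u \in \CC^\times$, the points $a^*_1, \dots, a^*_K$ are distinct and lie on a common circle, and then upgrading the genericity over $\CC$ to the statement over $\KK = \overline{\CC(u)}$. First I would fix notation: $\phi_u$ is an algebraic homomorphism $\CC^\times \to \PGL_2(\CC)$, so up to conjugation it is either trivial or of the form $z \mapsto u^d z$ for some integer $d$ (the only one-parameter algebraic subgroups of $\PGL_2(\CC)$ are the diagonal torus and the unipotent ones, but an algebraic \emph{homomorphism} from $\CC^\times$ kills the unipotent part, so the image is in a torus). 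In either case, $\phi_u$ and $\phi_{u^{-1}}$ each preserve some pencil of circles through the two fixed points of the torus; the key point is that for generic $u$, the finite set $\{a^*_1, \dots, a^*_K\}$ consists of $K$ distinct points. Indeed, each $a^*_k$ is an algebraic (in fact rational) function of $u$, and for $k \neq \ell$ the equation $a^*_k(u) = a^*_\ell(u)$ either holds identically or on a finite set; since the $a_k$ are distinct and we may further conjugate so that no accidental coincidences occur identically, we get distinctness for $u$ outside a finite set $S_1 \subset \CC^\times$.

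Next I would arrange that these $K$ points lie on a circle. Conjugating $\phi_u$ by a M\"obius transformation if necessary, assume $\phi_u(z) = u^d z$ (or $\phi_u = \identity$, in which case there is nothing to prove). For $u$ on the unit circle $|u| = 1$, every $\phi_u$ is an isometry of the round metric fixing $0$ and $\infty$ (a rotation), and so is $\phi_{u^{-1}}$; hence $\phi_u$ and $\phi_{u^{-1}}$ both preserve every circle of the form $|z| = r$. So if I first apply a fixed M\"obius transformation taking the original circle through the $a_k$ to the unit circle $S^1$, then for $u \in S^1$ the images $a^*_k$ all lie on $S^1$. Combining with the previous paragraph, for $u$ in the (infinite) set $S^1 \setminus S_1$, the points $a^*_1, \dots, a^*_K$ are distinct and lie on a circle, the partition sizes still sum to $2M$ (the $a^*_k$ are just a M\"obius relabelling), so Theorem~\ref{thm:MTV} applies: the intersection $X_{\lambda_1}(a^*_1) \cap \dots \cap X_{\lambda_K}(a^*_K)$ is finite, transverse, and has exactly $\int_X [X_{\lambda_1}] \dotsm [X_{\lambda_K}]$ reduced points.

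Finally I would deduce the statement over $\KK$ by a standard spreading-out / semicontinuity argument. Regard $u$ as an indeterminate; the scheme $\mathcal{Z} := X_{\lambda_1}(a^*_1) \cap \dots \cap X_{\lambda_K}(a^*_K)$ over $\Spec \CC[u, u^{-1}]$ (or a suitable localization thereof) is a closed subscheme of $X \times \Spec \CC[u,u^{-1}]$, hence of finite type; the generic fibre is $\mathcal{Z}_{\KK}$, the intersection we care about, and the fibre over a point $u_0 \in \CC^\times$ is $\mathcal{Z}_{u_0}$. By the previous paragraph, for infinitely many $u_0$ the fibre $\mathcal{Z}_{u_0}$ is a reduced scheme of dimension $0$ with exactly $N := \int_X [X_{\lambda_1}] \dotsm [X_{\lambda_K}]$ points. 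By upper semicontinuity of fibre dimension the generic fibre has dimension $0$, so $\mathcal{Z}_{\KK}$ is finite; by upper semicontinuity of the length function (constructibility of the fibre length on the base, together with the generic-equals-minimal principle on an irreducible base), the length of the generic fibre is at most $N$. On the other hand the length of every fibre is at least $N$ — this is the content of the Eisenbud–Harris flatness/finiteness of the Wronski map, since each $X_{\lambda_k}(a^*_k)$ is a Schubert variety representing $[X_{\lambda_k}]$, and the intersection number $N$ is a lower bound for the length of any zero-dimensional such intersection. Hence the generic fibre $\mathcal{Z}_{\KK}$ has length exactly $N$ and is therefore reduced of dimension $0$, i.e. a transverse intersection.

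The main obstacle I anticipate is the last step, specifically making precise that the length of the generic fibre is bounded \emph{below} by $N$: over a non-algebraically-closed base like $\CC(u)$ one must be careful that ``number of points counted with multiplicity'' really does equal the intersection number, and the cleanest way is to invoke that the $a^*_k$ lie in $\KP^1$ and appeal directly to the flatness of the Wronski map over $\KK$ (which holds by base change from Eisenbud–Harris), so that the fibre of $\Wr$ always has length $|\SYT(\Rect)|$ and each Schubert condition cuts down the length exactly as predicted by the Pieri/Littlewood–Richardson numbers; then equality of the generic length with $N$ forces transversality. A secondary technical point is the initial classification of $\phi_u$ and the conjugation reducing to $\phi_u(z) = u^d z$ — routine, but worth stating carefully so that the case $d = 0$ (trivial homomorphism) and the effect of $\phi_{u^{-1}}$ versus $\phi_u$ on the splitting at $K'$ are both handled.
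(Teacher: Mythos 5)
There are two genuine problems with your argument. First, your mechanism for producing ``good'' specializations of $u$ does not work in the stated generality: the proposition puts no compatibility condition between $\phi_u$ and the circle containing $a_1,\dots,a_K$. After you conjugate $\phi_u$ into the diagonal form $z\mapsto u^dz$, the only circles preserved by all the rotations $|u|=1$ are those centred at the fixed points ($|z|=r$), and the circle through the $a_k$ need not be one of them; you cannot simultaneously apply a further M\"obius transformation moving that circle to $S^1$ without destroying the diagonal normalization. For a general $\phi_u$ and $u\neq 1$ the points $a_k^*$ lie on a union of \emph{two} circles $\phi_u(\Gamma)$ and $\phi_{u^{-1}}(\Gamma)$ (the paper points this out right after the proposition), so your infinite set of unit-circle parameters simply is not available. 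The fix is trivial, and is what the paper does: specialize at $u=1$, where $\phi_1=\identity$ and $a^*_k=a_k$, so Theorem~\ref{thm:MTV} applies verbatim.

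Second, and more seriously, your concluding inference ``generic length $=N$, therefore reduced, i.e.\ transverse'' is false in this setting. Since the Schubert varieties are Cohen--Macaulay and the codimensions sum to $\dim X$ (equivalently, by the Eisenbud--Harris flatness and finiteness of the Wronski map that you yourself invoke), \emph{every} zero-dimensional intersection of this form has length exactly $N$, including the non-reduced ones occurring over the branch locus of $\Wr$; so a length count can never detect transversality, and your semicontinuity-of-length step only yields finiteness of the generic fibre, not reducedness. What is needed instead is that ``finite and transverse'' (equivalently, the intersection scheme over the $u$-line is finite and unramified/reduced in the fibre) is an \emph{open} condition on the irreducible base $\Spec\CC[u,u^{-1}]$: since it holds at the closed point $u=1$, it holds on a nonempty open set and hence at the generic point, which is exactly the paper's two-line proof. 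Your spreading-out framework is fine as scaffolding, but the transversality conclusion must come from this openness, not from the length computation.
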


\begin{proof}
If we set $u =1$, then $a^*_k = a_k$, so the result is true by 
Theorem~\ref{thm:MTV}.
Since the condition of being a finite transverse intersection is open,
the result remains true if $u$ is a formal parameter.
\end{proof}

Now assume that the two fixed points 
of $\phi_u$ lie on the same circle $\Gamma$ as $a_1, \dots, a_K$.  
In this case, note that $\phi_{-1}(\Gamma) = \Gamma$, and 
$\Gamma' := \phi_\imag(\Gamma) = \phi_{-\imag}(\Gamma)$ is also a circle.
Hence, if we put $a'_k := a^*_k\big|_{u=\imag}$, then the 
points $a'_1, \dots, a'_K$ lie on the circle $\Gamma'$.  For 
values of $u \in \CC^\times$ other than $\{\pm 1, \pm \imag\}$, 
the points $a^*_1, \dots, a^*_K$ lie
on a union of two circles.

Effectively, Proposition~\ref{prop:twocircles} allows us to extend the
correspondence $T \mapsto x_T$ to a situation --- albeit a more limited
one --- in which the roots of
the Wronskian lie on a union of two circles, which will allow us
to interpolate between $S^1$ and $\RP^1$.  
First, consider the case where $K=2M$, $K'=M$, and
$\bolda = \{a_1, \dots, a_{2M}\} \subset \RP^1$,
where 
\[
   a_1 \prec a_2 \prec \dots \prec a_{2M}\,.
\]
Put $\phi_u := 
\left(\begin{smallmatrix} u & 0 \\ 0 & 1 \end{smallmatrix}\right)$, which 
fixes $0$ and $\infty$.
Let $\bolda^* = \{a^*_1, \dots, a^*_{2M}\}$, and 
$\bolda' = \{a'_1, \dots, a'_{2M}\} \subset \imag \cdot \RP^1$.  
Then one can define the
correspondence $\SYT(\Rect) \to X(\bolda^*)$, $T \mapsto x_T(\bolda^*)$, 
using Definition~\ref{def:correspondence},
and extend it, using whichever convention is appropriate, 
to the case where
\begin{equation}
\label{eqn:middleneq}
   a_1 \preceq \dots \preceq a_M \prec a_{M+1} \preceq \dots \preceq a_{2M}\,.
\end{equation}
Moreover, since the definition of $x_T(\bolda^*)$
is the literally the same as in Section~\ref{sec:background}, 
it is compatible 
with the definition of $x_T(\bolda)$, and 
provided 
\begin{equation}
\label{eqn:noopposites}
a_k \neq -a_{k+1} \quad\text{or}\quad  a_k \in \{0,\infty\}
\qquad\text{ for all $k = 1 , \dots ,2M-1$}
\end{equation}
then it is also compatible with
the definition of $x_T(\bolda')$.  
%In the case of $\bolda'$, we note
%More importantly
Here, ``compatible'' means:

\begin{proposition}
\label{prop:compatible}
Suppose $\bolda = \{a_1, \dots, a_{2M}\}$
is a multiset of points on $\RP^1$ satisfying~\eqref{eqn:middleneq}
and~\eqref{eqn:noopposites}.
Let $x^* = x_T(\bolda^*)$, $x = x^*\big|_{u=1}$, and 
$x' = x^*\big|_{u=\imag}$.
Then $x =x_T(\bolda)$, and $x' = x_T(\bolda')$.
\end{proposition}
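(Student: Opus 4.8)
The plan is to exploit the fact that $x_T(\bolda^*)$, $x_T(\bolda)$, and $x_T(\bolda')$ are all defined by \emph{analytic continuation along paths}, together with Proposition~\ref{prop:twocircles}, which guarantees that the fibres $X(\bolda^*)$ stay reduced so that these continuations are unambiguous. The key observation is that specialization $u = 1$ and $u = \imag$ each reduce $\bolda^*$ to a multiset of points on a single circle ($\RP^1$ and $\imag\cdot\RP^1$ respectively), and in each case the continuation that \emph{defines} $x_T$ via Definition~\ref{def:correspondence} (shrinking and inflating the points) is literally the specialization at $u=1$ or $u=\imag$ of a path that also makes sense when $u$ is a formal parameter.

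First I would set up the deformation explicitly. The correspondence $x_T(\bolda^*)$ is built, following Definition~\ref{def:correspondence}, by forming the partitions $\lambda_k$ obtained from lifting the paths $\bolda^*_{k,t}$ (scaling the first $k$ or last $2M-k$ coordinates toward $0$ or $\infty$) and matching against $T_{[1,k]}$; the extension to the degenerate configuration~\eqref{eqn:middleneq} uses the appropriate convention. The crucial point is that the scaling maps $z \mapsto tz$ and $z \mapsto t^{-1}z$ \emph{commute} with $\phi_u = \bigl(\begin{smallmatrix} u & 0 \\ 0 & 1\end{smallmatrix}\bigr)$, since both are diagonal. Hence the path $\bolda^*_{k,t}$ is exactly $\phi_u$ (resp.\ $\phi_{u^{-1}}$) applied coordinatewise to the path $\bolda_{k,t}$ of Definition~\ref{def:correspondence}. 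By $\PGL_2$-equivariance of the Wronski map and of the flags $F_\bullet(a)$, the lift $x^*_{k,t}$ is the $\phi_u$-translate of the lift $x_{k,t}$ in the relevant coordinates, and $x^*_{k,0}$ lands in the same Schubert cell relative to $F_\bullet(0)$ or $F_\bullet(\infty)$ as $x_{k,0}$, because $\phi_u$ fixes $0$ and $\infty$. So the partition sequence computed over $\KK$ specializes at $u=1$ to the one computed for $\bolda$, giving $x = x^*|_{u=1} = x_T(\bolda)$.

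Next I would handle $u = \imag$. Here $\bolda' \subset \imag\cdot\RP^1$, and $x_T(\bolda')$ is defined via $\psi_{\Gamma'}^{-1}\bigl(x_T(\psi_{\Gamma'}(\bolda'))\bigr)$ with $\psi_{\Gamma'} = \psi_{\gamma\RP^1}$ the standard transformation for the circle $\gamma\RP^1$, $\gamma=\imag$. I would check that $\psi_{\Gamma'}\circ\phi_{\imag}$ and $\psi_{\Gamma'}\circ\phi_{-\imag}$ are the diagonal scalings appearing in the definition of $x_T(\psi_{\Gamma'}(\bolda'))$ on $\RP^1$ --- i.e.\ that applying $\phi_u$ with $u = \imag$ and then $\psi_{\Gamma'}$ is the same as first moving to $\RP^1$ and then doing the standard shrink/inflate --- up to a relabelling that matches the $\preceq$-order specified in~\eqref{eqn:middleneq}. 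Condition~\eqref{eqn:noopposites} is exactly what guarantees that no two points collide into an opposite pair under this relabelling, so that the $\preceq$-zone (or component of $\redA$) containing the degenerate configuration is unambiguous and the continuation defining $x_T(\bolda')$ agrees with the $u\to\imag$ specialization of the continuation over $\KK$. With this identification, specializing the formal-parameter computation at $u=\imag$ yields $x' = x_T(\bolda')$.

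The main obstacle I anticipate is \textbf{not} the equivariance bookkeeping, which is routine, but rather verifying that the \emph{conventions} match up: the paths $\bolda^*_{k,t}$, when specialized, must traverse the same $\preceq$-zones (Convention~\ref{convention:general}) or components of $\redA$ (Convention~\ref{convention:even}) that are used in the definitions of $x_T(\bolda)$ and $x_T(\bolda')$, so that ``analytic continuation over $\KK$ then specialize'' commutes with ``specialize then analytically continue.'' This is where hypotheses~\eqref{eqn:middleneq} and~\eqref{eqn:noopposites} do the work, and I would spend most of the writeup carefully tracking the orderings of $a^*_k|_{u=1}$ and $a^*_k|_{u=\imag}$ to confirm that both specializations fall into the correct simply-connected region; once that is established, the reducedness from Proposition~\ref{prop:twocircles} makes the continuation over $\KK$ well-defined and the identities $x = x_T(\bolda)$, $x' = x_T(\bolda')$ follow by uniqueness of lifts.
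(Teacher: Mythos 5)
Your argument is essentially the paper's own proof, just written out in more detail: the paper likewise observes that $\phi_u$ fixes $0$ and $\infty$ (so the flags and Schubert cells at $0,\infty$ appearing in Definition~\ref{def:correspondence} are preserved) and that condition~\eqref{eqn:noopposites} guarantees the transformed points $\psi_{\imag\cdot\RP^1}(a'_k)$ still satisfy the $\preceq$-ordering of~\eqref{eqn:middleneq}, and then concludes directly from the definition of $x_T$. The extra bookkeeping you propose --- tracking that the specializations at $u=1$ and $u=\imag$ stay in the correct $\preceq$-zone (or component of $\redA$) and invoking the reducedness from Proposition~\ref{prop:twocircles} to make the continuation unambiguous --- is precisely what the paper leaves implicit, so your proposal is correct and follows the same route.
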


\begin{proof}
Since $0, \infty$ are fixed by $\phi_u$, we have
$\phi_u(X_\lambda(0)) = X_\lambda(0)$ and 
$\phi_u(X_\lambda(\infty)) = X_\lambda(\infty)$ for all $\lambda \in \Lambda$.
Moreover~\eqref{eqn:noopposites} ensures that
\[
   \psi_{\imag \cdot \RP^1}(a'_1) \preceq \dots \preceq 
   \psi_{\imag \cdot \RP^1}(a'_M) \prec 
   \psi_{\imag \cdot \RP^1}(a'_{M+1}) \preceq \dots \preceq 
   \psi_{\imag \cdot \RP^1}(a'_{2M})\,.
\]
The proposition now follows from the definition of $x_T$.
%This follows from the fact that
%$x \in X_\lambda(0)$ iff $x^* \in X_\lambda(0)$ iff $x' \in X_\lambda(0)$.
\end{proof}

In general, if the the points of $\bolda$ 
lie on an arbitrary circle $\Gamma$, one can define 
$x_T(\bolda^*) \in X(\bolda^*)$
via a choice of M\"obius transformation $\psi_\Gamma$ 
that maps $\Gamma$ to $\RP^1$.
If the fixed points of $\phi_u$ are $w$ and $w'$, then we will require
$\psi_\Gamma(w) = 0$, and $\psi_\Gamma(w') = \infty$.
We will use the notation $x_T(\bolda^*, \psi_\Gamma)$, when it is 
important to emphasize the transformation being used.

For our purposes, we want $\Gamma = S^1$ to be the unit circle, and
$\Gamma'= \RP^1$.  To achieve this, let
\begin{equation}
\label{eqn:interpolation}
\phi_u := \begin{pmatrix} 
u+1 & u-1 \\ u-1 & u+1  
\end{pmatrix}\,.
\end{equation}
Then the fixed points of $\phi_u$ are $\{\pm1\}$, and 
$\phi_\imag(S_1) = \RP^1$.
Note also that if $a \in S^1$, then $\phi_u(a) = -\phi_{u^{-1}}(-a)$, so
$\bolda = -\bolda$ $\iff$ $\bolda^* = -\bolda^*$ $\iff$ $\bolda' = -\bolda'$.

\begin{proposition}
\label{prop:foldinginterpolation}
Let $\bolda = \{a_1, \dots, a_M, -a_1, \dots, -a_M\}$ be 
a multiset of points on $S^1$. 
Let $\bolda' = \{a'_1, \dots, a'_M, -a'_1, \dots, -a'_M\}$ 
be the multiset of points on $\RP^1$ obtained
from $\bolda$ using the homomorphism \eqref{eqn:interpolation}.  
Assume that $a_1, \dots, a_M$ have positive real part and non-negative
imaginary part; equivalently $1 \leq a'_1, \dots, a'_M < \infty$.

Let $T \in \SYT(\Rect)$ be a tableau, and 
consider the points 
$x_T = x_T(\bolda) \in X(\bolda)$, and 
$x_{\fold(T)} = x_{\fold(T)}(\bolda') \in X(\bolda')$, using the
standard conventions for the circles $S^1$ and $\RP^1$.
Then we have:
\begin{packedenumi}
\item $x_T \in \Omega$ if and only if $x_{\fold(T)} \in \Omega$;
\item for  $\lambda \in \Lambda$, $x_T \in X_\lambda(a_k)$ if and only if 
$x_{\fold(T)} \in X_\lambda(a'_k)$;
\item for  $\lambda \in \Sigma_1$, 
$x_T \in \Omega_\lambda(a_k)$ if and only if 
$x_{\fold(T)} \in \Omega_\lambda(a'_k)$;
\item for every $T' \in \SYT(\Rect)$, $x_T = x_{T'}$ 
if and only if $x_{\fold(T)} = x_{\fold(T')}$.
\end{packedenumi}
\end{proposition}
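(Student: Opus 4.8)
The plan is to reduce everything to a single statement about points moving in $\KK$-fibres of the Wronski map, and then specialize at $u=1$ and $u=\imag$. Using the homomorphism \eqref{eqn:interpolation}, set $\bolda^* = \{a^*_1, \dots\}$ with $a^*_k = \phi_u(a_k)$ for the first half and $\phi_{u^{-1}}(a_k)$ for the second half, so that $\bolda^*|_{u=1} = \bolda$ (points on $S^1$) and $\bolda^*|_{u=\imag} = \bolda'$ (points on $\RP^1$). By Proposition~\ref{prop:twocircles} the fibre $X(\bolda^*)$ over $\KK$ is reduced, and it has a basis of $\KK$-points indexed by $\SYT(\Rect)$; write $x^*_T := x_T(\bolda^*,\psi_\Gamma)$ for the appropriate standard transformations at the two ends. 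The crucial bookkeeping point is the matching of labels: $x^*_T|_{u=1}$ should be $x_T(\bolda)$ computed with the $S^1$-conventions, while $x^*_T|_{u=\imag}$ should be $x_{\fold(T)}(\bolda')$ computed with the $\RP^1$-conventions. This label shift $T \rightsquigarrow \fold(T)$ is exactly the content of Proposition~\ref{prop:foldingrotation} (folding corresponds to the quarter-rotation $\Psi$), combined with the compatibility of conventions in Proposition~\ref{prop:compatible}; the change-of-conventions tableau surgery of Proposition~\ref{prop:changeconventions} may also be needed to pass between Convention~\ref{convention:even} at the $S^1$ end and Convention~\ref{convention:general} at the $\RP^1$ end. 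So the first real step is to verify that, under these standard conventions and with the hypothesis $1 \le a'_k < \infty$ (which keeps $\psi_{S^1}(\bolda^*)$ and $\psi_{\RP^1}(\bolda^*)$ inside single $\preceq$-zones for all relevant $u$), the function $T \mapsto x^*_T$ interpolates $x_T(\bolda)$ and $x_{\fold(T)}(\bolda')$.

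Granting that, each of (i)--(iv) becomes a statement that a certain \emph{closed} subvariety of $X$, pulled back to $X(\bolda^*)$, either contains $x^*_T$ as a $\KK$-point or does not --- and such containment is insensitive to specialization as long as the specialized fibre stays reduced. Concretely: for (ii), $X_\lambda(a^*_k)$ is a flat family of Schubert varieties over $\Spec\KK$ (the flags $F_\bullet(a^*_k)$ vary algebraically in $u$), and $x^*_T$ is a $\KK$-point of the reduced scheme $X(\bolda^*)$; hence ``$x^*_T \in X_\lambda(a^*_k)$ over $\KK$'' holds iff it holds after specializing $u$ to any value in $\CC^\times$ away from the finitely many bad values, in particular iff it holds at $u=1$, i.e.\ $x_T \in X_\lambda(a_k)$, and iff it holds at $u=\imag$, i.e.\ $x_{\fold(T)} \in X_\lambda(a'_k)$. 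For (i), the same argument applies with $\Omega$ in place of $X_\lambda(a)$: $\Omega$ is a fixed closed subvariety of $X$ (it does not move with $u$ at all), $X(\bolda^*)$ is reduced over $\KK$, and $\Omega$ meets each specialized fibre in a reduced scheme by part (i) of Theorem~\ref{thm:main} combined with Theorem~\ref{thm:LGcircle} (applied to the relevant circles), so $x^*_T \in \Omega$ over $\KK$ iff $x_T \in \Omega$ iff $x_{\fold(T)} \in \Omega$. For (iii), note $\Omega_\lambda(a^*_k) = \Omega \cap X_\lambda(a^*_k)$ by definition, so (iii) follows by intersecting the conclusions of (i) and (ii). For (iv), ``$x^*_T = x^*_{T'}$'' is the vanishing of a section of a vector bundle (Plücker coordinates) over the reduced $\KK$-scheme $X(\bolda^*)$, so again it holds over $\KK$ iff it holds at $u=1$ iff it holds at $u=\imag$; and $x^*_T|_{u=1} = x_T$, $x^*_{T'}|_{u=1} = x_{T'}$, $x^*_T|_{u=\imag} = x_{\fold(T)}$, $x^*_{T'}|_{u=\imag} = x_{\fold(T')}$.

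I expect the only genuine obstacle to be the first step --- pinning down precisely that the interpolating family $x^*_T$ has $x_T(\bolda)$ and $x_{\fold(T)}(\bolda')$ as its two endpoints under the stated standard conventions. This requires carefully tracking three things simultaneously: the choice of $\psi_\Gamma$ at each end (the standard $\psi_{S^1}$ and $\psi_{\RP^1}$ must be compatible with $\psi_\Gamma(w) = 0$, $\psi_\Gamma(w') = \infty$ for the fixed points $w,w' = \pm 1$ of $\phi_u$), the $\preceq$-zone or component of $\redA$ in which the transported points lie (here is where the hypothesis $1 \le a'_k < \infty$, i.e.\ $a_k$ in the right quadrant of $S^1$, is used, mirroring the role of $|a_k|\ge 1$ in Proposition~\ref{prop:foldingrotation}), and the label change $T \rightsquigarrow \fold(T)$ coming from the fact that $\phi_\imag$ conjugates the $S^1$-symmetry $z \mapsto -z$ into the $\RP^1$-symmetry $z\mapsto z^{-1}$ --- which is exactly the geometric avatar of folding established in Proposition~\ref{prop:foldingrotation}. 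Once the endpoint identification is secured, parts (i)--(iv) are all instances of the same soft specialization principle (a closed condition on a $\KK$-point of a reduced scheme that stays reduced under specialization is detected at any good specialization), and the proof writes itself.
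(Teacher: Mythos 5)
Your overall architecture is the paper's: interpolate with the homomorphism $\phi_u$ of \eqref{eqn:interpolation}, work over $\KK$, and identify the two endpoints of the family $x^*_T = x_T(\bolda^*,\psi_{S^1})$ as $x^*_T\big|_{u=1} = x_T(\bolda)$ and $x^*_T\big|_{u=\imag} = x_T(\bolda',\psi_{\imag\cdot\RP^1}\circ\psi_{S^1}) = x_T(\bolda',\Psi) = x_{\fold(T)}(\bolda')$, using Proposition~\ref{prop:compatible} and Proposition~\ref{prop:foldingrotation}; your accounting of the conventions, the fixed points $\pm 1$, and the role of the hypothesis $1\le a'_k<\infty$ is correct and matches the paper's first half. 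Deriving (iii) from (i) and (ii) is also exactly what the paper does.

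The gap is in your ``soft specialization principle.'' Closed conditions are \emph{preserved} under specialization (so the $\KK$-statement implies the statements at $u=1$ and $u=\imag$), but they are not \emph{reflected}: a family of points $x^*_T$ can perfectly well enter the fixed subvariety $\Omega$, or a member of the flat family $X_\lambda(a^*_k)$, or collide with $x^*_{T'}$, precisely at a special parameter value, and reducedness of the specialized fibre does nothing to rule this out. Indeed your own phrase ``away from the finitely many bad values'' concedes the point: $u=1$ and $u=\imag$ are exactly the potentially bad values (where the two circles merge), so the equivalence you need is at the values your argument excludes. (A secondary issue: when $\bolda$ is a multiset --- allowed here, and needed later in the proof of Theorem~\ref{thm:branching} --- neither $X(\bolda)$ nor $X(\bolda^*)$ is reduced, so even the preliminary reducedness claims fail in the generality required.) What closes the reflecting direction, and what the paper actually uses, is conservation of number: by Theorem~\ref{thm:LGcircle} (with Proposition~\ref{prop:twocircles} supplying the two-circle/$\KK$ counterpart), the specialization maps $X(\bolda^*)\cap\Omega\to X(\bolda)\cap\Omega$ and $X(\bolda^*)\cap\Omega\to X(\bolda')\cap\Omega$ are surjective and one-to-one, i.e.\ bijections; equality of these finite counts at generic and special $u$ is what forbids a point from lying in $\Omega$ only at $u=1$ or only at $u=\imag$, and the analogous counting (with Schubert varieties in place of $\Omega$ for (ii), and with equality of points for (iv)) handles the remaining parts. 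Without some such counting input, the backward implications in your (i), (ii) and (iv) are unsupported.
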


\begin{proof}
%We'll prove (i); the rest are similar.
Let $x^* = x_T(\bolda^*, \psi_{S^1})$, $x = x^*\big|_{u=1}$, and 
$x' = x^*\big|_{u=\imag}$.
Then $\psi_{S^1}(x) \in \RP^1$ and $\psi_{S^1}(x') \in \imag \cdot \RP^1$.
Thus by Proposition~\ref{prop:compatible},
\[
x = x_T(\bolda, \psi_{S_1}) = x_T
\quad \text{and} \quad
x' = x_T(\bolda', \psi_{\imag \cdot \RP^1} \circ \psi_{S^1})
\]
Since $\psi_{\imag \cdot \RP^1} \circ \psi_{S^1} = \quarterrotate = \Psi$,
 Proposition~\ref{prop:foldingrotation} yields
%\comment{this still isn't right!}
\[
x' = x_T(\bolda', \Psi) = x_{\fold(T)}\,.
\]
Thus properties of $(x,\bolda, T)$ translate into properties 
of $(x', \bolda', \fold(T))$.  

Statement (iv) follows immediately.
For (i), note that the maps 
$X(\bolda^*) \cap \Omega \to X(\bolda) \cap \Omega$ and
$X(\bolda^*) \cap \Omega \to X(\bolda') \cap \Omega$ are surjective,
and by Theorem~\ref{thm:LGcircle} they are also one to one. 
We deduce that $x \in \Omega$ if and only if $x' \in \Omega$.
Statement (ii) is proved similarly, and 
%by noting that 
%$x \in X_\lambda(a_k)$ $\iff$ $x^* \in X_\lambda(a^*_j)$ $\iff$ 
%$x' \in X_\lambda(a'_k)$.
%The proof of Theorem~\ref{thm:main} implicitly shows that the converse
%of Lemma~\ref{lem:conjugates} holds; thus (ii) implies (i).
(iii) follows from (i) and (ii).
%$x \in \Omega$ $\iff$ $x^* \in \Omega$ $\iff$ $x' \in \Omega$,
%and (ii)--(iv) are similar.
\end{proof}

Putting this together with Theorem~\ref{thm:main}, we deduce:
%obtain a proof 
%that folding defines
%a bijection between $\AST(\Rect)$ and $\DST(\Rect)$.

\begin{theorem}
\label{thm:bijection}
$T \in \AST(\Rect)$ if and only if $\fold(T) \in \DST(\Rect)$.
\end{theorem}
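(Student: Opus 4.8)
The plan is to deduce Theorem~\ref{thm:bijection} as an immediate corollary of Theorem~\ref{thm:main} combined with Proposition~\ref{prop:foldinginterpolation}. The strategy is to find a single multiset $\bolda$ of points on $S^1$ that is simultaneously adapted to case (ii) of Theorem~\ref{thm:main} (via its image $\bolda'$ on $\RP^1$, which passes through $0$ and $\infty$) and to case (iii) (via $\bolda$ itself on $S^1$, which passes through neither $0$ nor $\infty$), so that the $\DST$-condition on one side and the $\AST$-condition on the other are both being tested against \emph{the same} geometric point $\Omega \subset X$.

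First I would fix a multiset $\bolda = \{a_1,\dots,a_M,-a_1,\dots,-a_M\}$ of points on $S^1$ satisfying the hypothesis of Proposition~\ref{prop:foldinginterpolation}: each $a_k$ has positive real part and non-negative imaginary part, equivalently $1 \leq a'_1, \dots, a'_M < \infty$ where $\bolda'$ is obtained via the homomorphism \eqref{eqn:interpolation}. For definiteness one may also take the $a_k$ to be distinct, so that $\bolda$ is a genuine set and the correspondences are honest bijections. Since $\bolda$ lies on $S^1$ (a dilatation of the unit circle, hence not through $0$ or $\infty$) and is even, Theorem~\ref{thm:main}(iii) applies: for $T \in \SYT(\Rect)$, the point $x_T = x_T(\bolda, \psi_{S^1})$ lies in $\Omega$ if and only if $T \in \AST(\Rect)$. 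On the other hand $\bolda'$ lies on $\RP^1$ (through $0$ and $\infty$) and is even, so Theorem~\ref{thm:main}(ii) applies: $x_{\fold(T)} = x_{\fold(T)}(\bolda', \psi_{\RP^1})$ lies in $\Omega$ if and only if $\fold(T) \in \DST(\Rect)$.

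Now I invoke Proposition~\ref{prop:foldinginterpolation}(i), which says precisely that $x_T \in \Omega$ if and only if $x_{\fold(T)} \in \Omega$. Chaining the three equivalences gives
\[
  T \in \AST(\Rect)
  \iff x_T \in \Omega
  \iff x_{\fold(T)} \in \Omega
  \iff \fold(T) \in \DST(\Rect)\,,
\]
which is the assertion of the theorem.

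I do not anticipate a genuine obstacle here, since all the work has been done in the preceding results; the only thing to check is bookkeeping, namely that the conventions match up. Specifically, Theorem~\ref{thm:main}(iii) uses Convention~\ref{convention:general} with $\psi_{S^1}$, and Proposition~\ref{prop:foldinginterpolation} uses exactly the standard conventions for $S^1$ and $\RP^1$ (Convention~\ref{convention:general} for the former, Convention~\ref{convention:even} for the latter), while Theorem~\ref{thm:main}(ii) uses Convention~\ref{convention:even} with $\psi_{\RP^1}$; these are consistent by design. The one point deserving a sentence of care is that one must exhibit \emph{some} valid $\bolda$ satisfying the positivity hypothesis of Proposition~\ref{prop:foldinginterpolation} — e.g. take the $a_k$ to be distinct points on the arc of $S^1$ in the closed first quadrant — so that the proposition is genuinely applicable; after that the argument is the three-line chain above.
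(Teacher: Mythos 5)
Your proposal is correct and is essentially identical to the paper's own proof: both fix a set $\bolda$ on $S^1$ satisfying the hypotheses of Proposition~\ref{prop:foldinginterpolation}, apply Theorem~\ref{thm:main}(iii) to $x_T(\bolda)$ and Theorem~\ref{thm:main}(ii) to $x_{\fold(T)}(\bolda')$, and conclude via Proposition~\ref{prop:foldinginterpolation}(i). Your extra remarks on the choice of $\bolda$ and the matching of conventions are just the bookkeeping the paper leaves implicit.
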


\begin{proof}
Let $\bolda$, $x_T$ and $x_{\fold(T)}$ be as in the statement of 
Proposition~\ref{prop:foldinginterpolation}, where $\bolda$ is a set.
By Theorem~\ref{thm:main}(iii),  $T \in \AST(\Rect)$ if and only if
$x_T \in \Omega$.
By Theorem~\ref{thm:main}(ii), 
$\fold(T) \in \DST(\Rect)$ if and only if $x_{\fold(T)} \in \Omega$.  
The result then follows by 
Proposition~\ref{prop:foldinginterpolation}(i).
\end{proof}

%We now turn to the branching rule for the map
%$H^*(X) \otimes H^*(\Omega) \to H^*(\Omega)$.  

Suppose $\mu, \lambda \in \Lambda$ are doubles of strict partitions,
and $\mu \subset \lambda$.  We define
$\DST(\lambda/\mu)$ analogously to $\DST(\Rect)$, as the set of
standard Young tableaux of skew shape $\lambda/\mu$ with entries
\[
   1' < 1 < 2' < 2 < \dots < \ell' < \ell\,,
\]
$2\ell = |\lambda/\mu|$, that have diagonal symmetry.
%such that
%if the entry $k$ is in row $i$ and column $j$,
%then the entry $k'$ is in row $j$ and column $i+1$. 
For $T \in \DST(\lambda/\mu)$ (or more generally for any tableau
with entries ordered as above) define $\unfold(T)$ to be the
tableau obtained by the following procedure:
For each $k$ from $\ell$  to $1$, delete entry $k'$
and slide the emptied box through the subtableau formed by
entries $\{k, k{+}1, \dots, \ell\}$; then rectify the resulting tableau.
%reversing the steps of the folding procedure,
%deleting entries $1', 2', \dots, \ell'$, and finally rectifying.
Hence $\unfold(T)$ will be a standard Young tableau with entries 
$1 < 2 < \dots < \ell$, of some shape $\nu \in \Sigma_1$.
An example is given in Figure~\ref{fig:unfold}.

\begin{figure}[tb]
\begin{multline*}
T \ = \ 
{\begin{young}[c]
, & , & , & ?2'& !4\\
, & ?1' & !1 & !3  \\
!2 & ?3' & ?5' & !5  \\
?4'  
\end{young}}
\ \to \ %
{\begin{young}[c]
, & , & , & ?2'& !4\\
, & ?1' & !1 & !3  \\
!2 & ?3' & !5  \\
?4'  
\end{young}}
\ \to \ %
{\begin{young}[c]
, & , & , & ?2'& !4\\
, & ?1' & !1 & !3  \\
!2 & ?3' & !5  \\
,
\end{young}}
\ \to \ %
{\begin{young}[c]
, & , & , & ?2'& !4\\
, & ?1' & !1 & !3  \\
!2 & !5  \\
,
\end{young}}
\\[1.5ex]
\ \to \ %
{\begin{young}[c]
, & , & , & !3 & !4\\
, & ?1' & !1  \\
!2 & !5  
\end{young}}
\ \to \ %
{\begin{young}[c]
, & , & , & !3 & !4\\
, & !1  \\
!2 & !5  
\end{young}}
\ \to \ %
{\begin{young}[c]
!1 & !3 & !4\\
!2 & !5 
\end{young}}
\ =\ \unfold(T)
\end{multline*}
\caption{An example of $T \mapsto \unfold(T)$.}
\label{fig:unfold}
\end{figure}

%\begin{proposition}
%\label{prop:unfold}
%Let $\bolda = \{a_1, -a_1,  \dots, a_M, -a_M\} \in \redA$, where
%$0 \leq a_1  \leq a_2 \leq \dots \leq a_M \leq \infty$, and
%$a_i = a_{i+1} = \dots = a_j$. 
%Let $T \in \SYT(\Rect)$, and $T' \in \SYT(\Rect)$;
%let 
%$x = x_T(\bolda)$, using convention~\ref{convention:even}.
%Then $x_T(\bolda) \in X_\lambda(a_i)$ where
%$\lambda$ is the shape of $\unfold(T_[{2i,2j+1}])$.
%\end{proposition}
%
%\begin{proof}
%Write $T = \fold(T^\circ)$.  Then $\unfold(T_[{2i,2j+1}])$ is 
%equivalently described as the rectification of $T^\circ_[{i,j}]$.  By 
%Proposition~\ref{prop:foldingrotation},
%$\Psi(x) = x_{T^\circ}(\bolda)$, and by 
%Theorem~\ref{thm:correspondence}(i), $\Psi(x) \in X_\lambda(\Psi(a_i))$;
%hence $x \in X_\lambda(a_i)$.
%\end{proof}

\begin{theorem}[Branching rule]
\label{thm:branching}
For $\sigma, \tau \in \Sigma_0$, and $\nu \in \Sigma_1$, let
$g_{\nu \tau}^\sigma$ denote the structure constants of the
map $H^*(X) \otimes H^*(\Omega) \to H^*(\Omega)$, 
in the Schubert basis; i.e.
\[
      \imath^* [X_\nu] \cdot [\Omega_\tau] 
      = \sum_{\sigma \in \Sigma_0} g_{\nu\tau}^\sigma [\Omega_\sigma]\,.
\]
Then for any tableau $S \in \SYT(\nu)$,
$g_{\nu \tau}^\sigma$ is the number of tableaux $T$ 
such that $T \in \DST(\lambda/\mu)$,
where $\lambda, \mu$ are the doubles of $\sigma, \tau$ respectively,
and $\unfold(T) = S$.
\end{theorem}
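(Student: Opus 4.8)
The plan is to deduce the branching rule from Theorem~\ref{thm:LGcircle} together with Proposition~\ref{prop:foldinginterpolation}, by realizing the structure constant $g^\sigma_{\nu\tau}$ as a count of points in a transverse intersection inside $\Omega$ and then matching those points with tableaux.  First I would fix a convenient configuration of points.  Let $\ell = |\nu|$ and $m = |\sigma/\tau| = \ell$, so $|\tau| + |\nu| = |\sigma|$, and consider an intersection of the form
\[
   \Omega_\tau(0) \cap \Omega_\sigma^\vee(\infty) \cap \Omega_\one(a_1) \cap \dots \cap \Omega_\one(a_\ell)\,,
\]
where $a_1, \dots, a_\ell$ together with $0, \infty$ satisfy the hypotheses of Theorem~\ref{thm:LGcircle} (the points $0, -0, a_1, -a_1, \dots, a_\ell, -a_\ell, \infty, -\infty$ lying on a circle through $0$ and $\infty$, e.g.\ a rotation of $\RP^1$, or equivalently on $S^1$ after a change of conventions).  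By Theorem~\ref{thm:LGcircle} this intersection is finite and transverse in $\Omega$, so its cardinality equals $\int_\Omega [\Omega_\tau] \cdot \imath^*[X_\one]^{\ell} \cdot \imath^*[X_{\sigma^\vee}]$, and expanding $\imath^*[X_\one]^\ell = \sum_\nu |\SYT(\nu)| \, \imath^*[X_\nu]$ and using the duality pairing $\int_\Omega [\Omega_\sigma][\Omega_\tau]\cdots$ this number is $\sum_\nu |\SYT(\nu)| \, g^\sigma_{\nu\tau}$.  (I should double-check the precise cohomological bookkeeping: that $\imath^*[X_{\sigma^\vee}]$ pairs with $[\Omega_\tau]\cdot\imath^*[X_\nu]$ to extract exactly $g^\sigma_{\nu\tau}$, using Proposition~\ref{prop:pullbackdivisor} and the self-duality of the Schubert basis of $H^*(\Omega)$.)

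Next I would identify the points of this intersection with tableaux via the correspondence $T \mapsto x_T(\bolda)$ of Section~\ref{sec:background}, where $\bolda$ is the multiset consisting of $0$ with multiplicity $2|\tau|$, each $a_k$ with multiplicity $2$ wait — more precisely $\bolda$ should be the even multiset with $0$ of multiplicity $2|\tau|$, $\infty$ of multiplicity $2|\sigma^\vee|$, and each $\pm a_k$ simple, so that $\Wr(x_T;z)$ has the right root multiplicities.  By Theorem~\ref{thm:main}(ii) the points of $X(\bolda) \cap \Omega$ are exactly the $x_T$ with $T \in \DST(\Rect)$; and by Theorem~\ref{thm:correspondence}(i) (after switching to Convention~\ref{convention:general} via Proposition~\ref{prop:changeconventions}), the point $x_T$ lies in $\Omega_\tau(0) \cap \Omega_\sigma^\vee(\infty) \cap \Omega_\one(a_1) \cap \dots$ precisely when $T_{[1,2|\tau|]}$ has shape the double of $\tau$, $T$ overall has shape the double of $\sigma$ on the relevant outer piece — i.e.\ the middle chunk $T_{[2|\tau|+1,\,2|\tau|+2\ell]}$ of $T$, read as a skew tableau, lies in $\DST(\lambda/\mu)$ with $\lambda, \mu$ the doubles of $\sigma, \tau$.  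So the points of the intersection are indexed by pairs $(T_0, T')$ where $T_0$ is a fixed-shape tableau piece (irrelevant, it just records that we landed in $\Omega_\tau(0)$) and $T' \in \DST(\lambda/\mu)$, and the count is $|\DST(\lambda/\mu)|$-ish — but I still need to divide out the multiplicity coming from $\imath^*[X_\one]^\ell = \sum |\SYT(\nu)| \imath^*[X_\nu]$.

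This is where Proposition~\ref{prop:foldinginterpolation} and the operator $\unfold$ enter, and this is the step I expect to be the main obstacle.  The idea is that for $T' \in \DST(\lambda/\mu)$, the point $x_{T'}$ (built from the inner piece) lies in a single Schubert variety $\Omega_\nu$-type cell, but to see which $\nu$ and to see the $|\SYT(\nu)|$ multiplicity collapse, I use the interpolation: degenerate the simple points $\pm a_k$ to a common circle configuration, or rather apply Proposition~\ref{prop:foldinginterpolation} to transport the situation on $S^1$ to the situation on $\RP^1$, where $\fold$ / $\unfold$ takes over.  Concretely, the dual-equivalence statement Theorem~\ref{thm:correspondence}(ii) says $x_T = x_{T'}$ iff the relevant middle subtableaux are dual equivalent, so the fibre of $T \mapsto x_T$ over a given intersection point is a dual equivalence class, which has size $|\SYT(\nu)|$ where $\nu$ is the rectification shape; and one shows (using that $\unfold$ is, up to rectification, the composite of $\fold$ with deletion of primed entries, cf.\ Figures~\ref{fig:fold} and~\ref{fig:unfold}, and that $\unfold$ is constant on these dual equivalence classes) that this $\nu$ is exactly the shape of $\unfold(T')$.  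Therefore
\[
   \sum_\nu |\SYT(\nu)| \, g^\sigma_{\nu\tau} = |\DST(\lambda/\mu)| = \sum_\nu |\SYT(\nu)| \cdot \#\{T' \in \DST(\lambda/\mu) : \unfold(T') \text{ has shape } \nu\}\,,
\]
and refining the transverse intersection by also imposing, at one of the $a_k$ (or by a standard dual-equivalence / jeu-de-taquin argument fixing a recording tableau $S \in \SYT(\nu)$), that $\unfold(T') = S$, we get $g^\sigma_{\nu\tau} = \#\{T' \in \DST(\lambda/\mu) : \unfold(T') = S\}$ for any single $S$, which is the claim.  The delicate points to nail down are: (a) that $\unfold$ is well-defined on dual equivalence classes and matches the geometric rectification shape, which should follow from the compatibility of $\fold$ with jeu de taquin established via Propositions~\ref{prop:foldingrotation} and~\ref{prop:foldinginterpolation}; and (b) the cohomological normalization ensuring the answer is independent of the choice of $S$, which is the usual fact that dual-equivalence classes in a Schubert problem are equinumerous across recording tableaux.
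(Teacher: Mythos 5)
There is a genuine gap at the decisive step. Your intersection $\Omega_\tau(0) \cap \Omega_{\sigma^\vee}(\infty) \cap \Omega_\one(a_1) \cap \dots \cap \Omega_\one(a_\ell)$ only computes the aggregate $\int_\Omega [\Omega_\tau]\,\imath^*[X_\one]^\ell\,[\Omega_{\sigma^\vee}] = \sum_\nu |\SYT(\nu)|\, g^\sigma_{\nu\tau}$, and with the $\pm a_k$ distinct and simple the intersection points are in bijection with the middle pieces $T'\in\DST(\lambda/\mu)$ (by Theorem~\ref{thm:correspondence}(ii) the end segments at $0$ and $\infty$ collapse to single dual equivalence classes, but the middle entries must agree exactly). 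So your claim that ``the fibre of $T\mapsto x_T$ over a given intersection point is a dual equivalence class of size $|\SYT(\nu)|$'' is false in this configuration: the factor $|\SYT(\nu)|$ comes from the cohomological expansion of $\imath^*[X_\one]^\ell$, not from fibres of the correspondence, and the identity $\sum_\nu |\SYT(\nu)|\,g^\sigma_{\nu\tau} = \sum_\nu |\SYT(\nu)|\cdot\#\{T' : \unfold(T')\text{ has shape }\nu\}$ does not imply termwise equality, let alone independence of the recording tableau $S$. The proposed fix --- ``refining the transverse intersection by also imposing, at one of the $a_k$, that $\unfold(T')=S$'' --- is not a well-defined geometric condition, and the parenthetical appeal to a ``standard dual-equivalence argument'' is exactly the content that has to be proved.

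The way to make the count isolate a single $g^\sigma_{\nu\tau}$ is to impose the condition $\nu$ at one point: take the intersection $\Omega_\tau(0)\cap\Omega_\nu(e)\cap\Omega_{\sigma^\vee}(\infty)$ (so the middle of the multiset is $e^{|\nu|},(-e)^{|\nu|}$), which by Theorem~\ref{thm:LGcircle} and Poincar\'e duality has exactly $g^\sigma_{\nu\tau}$ points. But then several $R\in\DST(\Rect)$ with the same middle shape represent the same point, and the heart of the argument is deciding when $x_R=x_{R'}$: this is where one writes $R=\fold(U)$ with $U\in\AST(\Rect)$ (Theorem~\ref{thm:bijection}), perturbs the degenerate multiset to one satisfying the hypotheses of Proposition~\ref{prop:foldinginterpolation} (its hypotheses fail at the multiset containing $0$'s and $\infty$'s, so Theorem~\ref{thm:correspondence}(ii) is needed to justify the perturbation), applies Proposition~\ref{prop:foldinginterpolation}(iv) to transfer the question to $S^1$, and uses the antidiagonal symmetry of $U$ to reduce the two resulting dual-equivalence conditions to one, namely dual equivalence of $U_{[|\tau|+1,|\sigma|]}$. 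Only then does the fact that each dual equivalence class contains exactly one tableau rectifying to $S$ yield the count $\#\{T\in\DST(\lambda/\mu) : \unfold(T)=S\}$ and its independence of $S$. Your sketch gestures at these ingredients (``degenerate the simple points'', ``one shows $\unfold$ is constant on dual equivalence classes'') but does not carry any of them out, and as written the argument stops at the aggregated identity.
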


\begin{proof}
Let $\sigma^\vee \in \Sigma_0$ denote the strict partition 
whose double is $\lambda^\vee$.
Let $e > 1$ be a real number.
By Poincar\'e duality, $g_{\nu\tau}^\sigma$, is the number of 
point in the triple intersection 
\begin{equation}
\label{eqn:tripleintersection}
 \Omega_\tau(0) \cap \Omega_\nu(e) \cap \Omega_{\sigma^\vee}(\infty)
\end{equation}
(which is transverse in $\Omega$ by Theorem~\ref{thm:LGcircle}).
We can obtain $g_{\nu\tau}^\sigma$ by counting
equivalence classes of tableaux $R \in \DST(\Rect)$ such that
$x_R = x_R(\bolda')$ belongs to the 
intersection~\eqref{eqn:tripleintersection}, where
$\bolda' = 
\big\{0^{|\mu|}, e^{|\nu|}, (-e)^{|\nu|}, \infty^{|\sigma^\vee|}\big\}$.
%In the following, all points $x_R$ are assumed to be in the fibre 
%$\Wr^{-1}(x^{|\mu|}(x^2-1)^{|\nu|})$.

Given $R \in \DST(\Rect)$, let 
$T := R_{[|\mu|+1, |\lambda|]}$.  
By Theorem~\ref{thm:correspondence}(i) we have:
\begin{packedenum}
\item[(a)]
$x_R \in \Omega_\tau(0)$ if and only $R_{[1, |\mu|]}$ has shape $\mu$;
\item[(b)]
$x_R \in \Omega_{\sigma^\vee}(\infty)$  if and only if
$R_{[|\lambda|+1, 2M]}$ has shape $\Rect/\lambda$;
\item[(c)]
$x_R \in \Omega_\nu(1)$ if and only if $\unfold(T)$ has shape $\nu$.
\end{packedenum}
Here we have used Proposition~\ref{prop:changeconventions} to
switch from Convention~\ref{convention:even} 
to Convention~\ref{convention:general}: 
in this case,
$R_{[1, |\nu|]}$ and
$R^\circ_{[1, |\nu|]}$ have the same shape; 
$R_{[|\lambda|+1, 2M]}$ and $R^\circ_{[|\lambda|+1, 2M]}$ have
the same shape; and $\unfold(T)$ is the rectification of
$R^\circ_{[|\mu|, |\mu|+|\nu|]}$.

Now assume (a), (b) and (c) hold. 
Given another tableau 
$R' \in \DST(\Rect)$, we need to determine when $x_R = x_{R'}$.
Fix any two tableaux $R_0 \in \DST(\mu)$ and 
$R_\infty \in \DST(\Rect/\lambda)$, and let
\[
   \calR := \{R \in \DST(\Rect) \mid 
    R_{[1, |\mu|]} = R_0 \text { and }
   R_{[|\lambda|+1,2M]} = R_\infty\}
\,.
\]
By Theorem~\ref{thm:correspondence}(ii),
$x_R = x_{R'}$ if
$R_{[|\mu|+1, |\lambda|]} = R'_{[|\mu|+1, |\lambda|]}$.  
In particular if $R'$ is obtained from $R$ by replacing $R_{[0,|\mu]}$
by $R_0$ and $R_{[|\lambda|+1,2M]}$ by $\infty$, then $R' \in \calR$
and $x_R = x_{R'}$.  This shows that $\{x_R \mid R \in \calR\}$
contains all points of the intersection~\eqref{eqn:tripleintersection}.
Therefore to count these points 
%in the intersection~\eqref{eqn:tripleintersection}
it is enough to determine when $x_R = x_{R'}$ for $R, R' \in \calR$.

By Theorem~\ref{thm:bijection}, we can write $R = \fold(U)$ and 
$R' = \fold(U')$, where $U, U' \in \AST(\Rect)$.
Note that $\unfold(T)$ is also the rectification of
$U_{[|\tau|+1, |\sigma|]}$.
%To determine when $x_R = x_{R'}$, 
We now need 
Proposition~\ref{prop:foldinginterpolation}, which has hypotheses
that are not met by $\bolda'$.  Instead, consider a multiset
\[
   \boldb' := \big\{b_1, -b_1, \dots, b_{|\tau|}, -b_{|\tau|},\,
       e^{|\nu|},\, (-e)^{|\nu|},\,
   c_1, -c_1, \dots, c_{|\sigma^\vee|}, -c_{|\sigma^\vee|} \big\}
\]
where $1 < b_1 < \dots < b_{|\tau|} < e 
< c_1< \dots < c_{|\sigma^\vee|} < \infty$.
For $\boldb'$ there is a corresponding multiset $\boldb$ 
of points on $S^1$.
By Theorem~\ref{thm:correspondence}(ii),
$x_R(\bolda') = x_{R'}(\bolda')$ if and only if 
$x_R(\boldb') = x_{R'}(\boldb')$, when $R, R' \in \calR$. 
By Proposition~\ref{prop:foldinginterpolation}(iv) this occurs
if and only if $x_U(\boldb) = x_{U'}(\boldb)$.
Again, by Theorem~\ref{thm:correspondence}(ii), this occurs
if and only if
$U_{[|\tau|+1, |\sigma|]}$
is dual equivalent to
$U'_{[|\tau|+1, |\sigma|]}$,
and 
$U_{[2M-|\sigma|+1, 2M-|\tau|]}$
is dual equivalent to
$U'_{[2M-|\sigma|+1, 2M-|\tau|]}$.
But since $U$ is antidiagonally symmetrical, the first two are
dual equivalent if and only if the second two are dual equivalent.
Thus we have shown that for $R, R' \in \calR$, $x_R = x_{R'}$ if
and only if $U_{[|\tau|+1, |\sigma|]}$ is dual equivalent to
$U'_{[|\tau|+1, |\sigma|]}$.

Since each dual equivalence
class contains exactly one tableau that rectifies to $S$,
$g_{\nu \tau}^\sigma$ is the number of tableaux $R \in \calR$
such that $U_{[|\tau|+1,|\sigma|]}$ rectifies to $S$, where
$R = \fold(U)$.  The result follows, since we can identify 
$T$ with $R \in \calR$, and $\unfold(T)$ is the rectification of 
$U_{[|\tau|+1,|\sigma|]}$.
\end{proof}
%and $x_T = x_{T'}$ if $T_{[|\nu|+1,2M]} = T'_{[|\nu|+1,2M]}$.

\begin{remark}
The constants $g_{\nu \tau}^\sigma$ appear in symmetric function theory
as the coefficients of the expansion of a skew Schur $P$-function in 
terms of ordinary Schur functions:
$P_{\sigma/\tau} = \sum_{\nu} g_{\nu \tau}^\sigma s_\nu$,
and in equivalent identities involving the Schur $Q$- and $S$-functions.
In the case where $\tau = \epsilon$ is the empty partition, 
there are combinatorial formula for 
these constants
%$g_{\nu \epsilon}^\sigma$
due to Worley~\cite{Wor} and Sagan~\cite{Sag}, 
which are equivalent to the rule in Theorem~\ref{thm:branching} 
by \cite[Proposition 7.1]{Hai-mixed}.
The rule for the case where $\tau \neq \epsilon$, can be obtained by 
from the case where $\tau = \epsilon$ by combinatorial arguments, 
but it does not appear to have
received the same degree of attention. This may be because one has 
the alternate formula:
$g_{\nu \tau}^\sigma = \sum_\kappa g_{\nu \epsilon}^\kappa \,
f_{\kappa \tau}^\sigma$\,, where $f_{\kappa \tau}^\sigma$ are the
Schubert structure constants for $H^*(\LG)$.
%$Q_{\kappa} Q_{\tau} = \sum_\sigma f_{\kappa \tau}^\simga Q_\sigma$.
The combinatorial relationship between this formula
and the rule in Theorem~\ref{thm:branching} is unclear.
%The rule for computing these coefficients is implicitly proved
%in~\cite[Proposition 7.1]{Hai-mixed}.
%There are other known combinatorial rules for these coefficients;
%\comment{i don't think there are references???}
%The formulation of Theorem~\ref{thm:branching} is equivalent
%rule in, by~\cite{Hai-mixed}.
\end{remark}

\begin{remark}
Although the proof of Theorem~\ref{thm:branching} uses 
some combinatorial properties of dual equivalence, these 
facts have geometric interpretations, which are established
in~\cite{Pur-Gr} (or can be established by similar arguments).  
For example, the fact that each dual equivalence
class contains exactly one tableau with a particular rectification
is equivalent to the fact that a Schubert variety and its opposite
intersect at exactly one point.
The interpretation of dual equivalence
itself is Theorem~\ref{thm:correspondence}(ii).
\end{remark}

%%%%%%%%%%%%%%%%%%%%%%%%%%%%%%%%%%%%%%%%%%%%%%%%%%%%%%%%%%%%%%
%%%%%%%%%%%%%%%%%%%%%%%%%%%%%%%%%%%%%%%%%%%%%%%%%%%%%%%%%%%%%%

\section{Miscellaneous remarks}
\label{sec:conclusion}

It is worth recording the defining equations for $\Omega$, since
they are remarkably easy to state. 
We do so here,
without proof.  If $\big(p_\lambda(x)\big)_{\lambda \in \Lambda}$ are the
Pl\"ucker coordinates of a point $x \in X$ in the basis 
$\{1,x,x^2, \dots, x^{2M}\}$ for $\pol{2M}$, then the Wronski map
can be written as
\[
   \Wr(x;z) = \sum_{\lambda \in \Lambda} q_\lambda p_\lambda(x) z^{|\lambda|}
\]
where $q_\lambda \in \ZZ$ are constants (see~\cite[Section 2.2]{Pur-Gr}).
Let $\redL_+ := \{ \lambda_+ \mid \lambda \in \redL\}$.
The equations defining $\Omega$ (as a projective scheme) 
consist of the quadratic
Pl\"ucker equations in the Pl\"ucker variables
$(p_\lambda)_{\lambda \in \Lambda}$ that define $X$, together
with the linear relations:
\begin{align*}
   && p_\mu  &= 0 &&\text{for $\mu \notin \redL_+$} \\
   \text{and} &&
    q_{\lambda_+}p_{\lambda_+}  &= (-1)^{|\lambda_+|}\,
   q_{\Tlambda_+}p_{\Tlambda_+}  
   &&\text{for $\lambda \in \redL$\,.}
\end{align*}
This description remains accurate for any diagonal change of basis
of $\pol{2M}$.
This shows another facet of the connection between $\Omega$ and the Wronski
map, and Theorem~\ref{thm:main}(i) is an immediate consequence.
%\comment{reference for equations of Lagrangian Grassmannian, in general}

One thing that is missing from our story is the Littlewood-Richardson
rule for computing the Schubert structure constants of $\LG$ \cite{Pra}.  
Although this is closely related to the Littlewood-Richardson
rule for $\OG$, it would be nice to have an independent 
geometric interpretation.
There are several problems with this.
First is the fact that we only have two orthogonal
flags at our disposal, whereas counting points in a triple intersection 
of Schubert varieties requires three such flags.  
%However, this is not the most serious
%issue.  
Another issue is that method in~\cite{Pur-Gr} 
relies on the fact that
we can move a point $\bolda$ continuously from one $\preceq$-zone to another;
the discontinuities in the correspondence can be described in terms of 
jeu de taquin slides.
Unfortunately, if $\bolda \subset \RP^1$ is even, the correspondence 
is continuous according to Convention~\ref{convention:even}, and so
the embedding $\Omega$ does not come with an analogous jeu de taquin
theory.  Since Theorem~\ref{thm:MTV} is false if the roots do not
lie on a circle, it is unclear how one could obtain this.
An obvious idea is that Theorem~\ref{thm:LGcircle} may extend to
a case where the roots lie on a union of two circles; however, this
is also false. The generic statement in Proposition~\ref{prop:twocircles}
is not good enough to make these arguments work.  
Finally, although there are variations of the jeu de taquin for shifted 
tableaux, they are defined to work on slightly different classes of 
objects, and do not behave well on $\DST(\Rect)$.

An alternate place to look for the Littlewood-Richardson rule 
is in Sottile's conjecture,
an analogue of Theorem~\ref{thm:MTV} for $\LG$ involving a one 
parameter family of orthogonal flags \cite{Sot-Real}.
If this conjecture is true, then it could be used in place of 
Theorem~\ref{thm:LGcircle}, which would give us additional orthogonal 
flags to work with.  Unfortunately, it may not be possible to define a
correspondence $T \mapsto x_T$ in this situation: part of the
conjecture states that for real parameters, the points of a Schubert
intersection are non-real whenever there is more than one point.  
Hence any one-to-one correspondence would have to break the symmetry of 
complex conjugation.  Nevertheless, it might be possible to deduce the 
Littlewood-Richardson rule without this.
We note there is a $\PGL_2(\CC)$ action on $\LG$ in this picture,
and therefore any embedding of $\LG$ in a Grassmannian that is defined 
by linear equations in the Pl\"ucker variables will probably not yield 
a proof of Sottile's conjecture.

Another application of the Wronski map is to study of the
combinatorial operation of promotion on tableaux.
Promotion has received a fair amount of attention recently, because of its 
relevance in representation theory, and recent discoveries about
its combinatorial structure \cite{FK, Rho, Wes}.
As noted in \cite[Remark 1.12]{Pur-ribbon},
the embedding of $\OG$ in $X$ leads to description of the 
orbit structure of promotion on $\SYT(\shiftedstair)$.
Similarly, the results of this paper can be combined with the arguments
in~\cite{Pur-ribbon} to describe the orbit 
structure of promotion on $\SYT(\stair)$.
The idea of doubling a staircase tableau to study promotion was
suggested in~\cite{PW}, and our framework provides way to carry this out.
Promotion on $\SYT(\stair)$ corresponds to rotation of $S^1$,
under Theorem~\ref{thm:main}(iii).
Here are the results one obtains.

\begin{theorem}
\label{thm:promotion}
Let $p, r$ be positive integers such that $pr = M$.
\begin{packedenumi}
\item
The number of tableaux in $\SYT(\shiftedstair)$ 
that are fixed by the $p$\nth power of promotion
is equal to 
the number of diagonally symmetrical $r$-ribbon tableaux of shape
$\Rect$ in which rightmost $k'$ is (strictly) left of the
rightmost $k$, for all $k = 1, \dots, p$.

\item
For every tableau in $\SYT(\stair)$, the order of promotion is even.
The number of tableaux in $\SYT(\stair)$
that are fixed by the $(2p)$\nth power of promotion 
is equal to the number of diagonally symmetrical
$r$-ribbon tableaux of shape $\Rect$.
\end{packedenumi}
\end{theorem}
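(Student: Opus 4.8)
The plan is to mimic, in the Lagrangian setting, the strategy of \cite[Remark 1.12]{Pur-ribbon} for $\OG$ and $\SYT(\shiftedstair)$, using the embedding $\Omega$ and the circle $S^1$ in place of the embedding $Y$ and $\RP^1$. The key geometric input is Theorem~\ref{thm:main}(iii): points $x \in \Omega$ whose Wronskian has roots on a dilatation of $S^1$ are exactly the $x_T(\bolda,\psi_\Gamma)$ with $T \in \AST(\Rect)$, and by Lemma~\ref{lem:fibrecount} these are counted by $|\SYT(\stair)|$. First I would set $\bolda_\zeta := \{\zeta\omega_1,\dots,\zeta\omega_{2M}\}$ where $\omega_1,\dots,\omega_{2M}$ are the $2M$\nth roots of $1$ (or $-1$, matching the parity convention for $S^1$) and $\zeta$ ranges over a generic choice; the point is that multiplication by a primitive $2M$\nth root of unity rotates $S^1$ and cyclically permutes the roots, and under the correspondence $T \mapsto x_T$ of Section~\ref{sec:background} this rotation is realized by \emph{promotion} on the tableau. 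This is the standard dictionary (cf.\ \cite{Pur-Gr, Pur-ribbon}): rotating $\bolda$ by one ``slot'' on the circle induces $T \mapsto \promote(T)$ up to the appropriate reindexing, and more generally rotation by $p$ slots induces $\promote^p$.

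Next I would translate ``fixed by $\promote^p$'' into a geometric condition. A tableau $T \in \AST(\Rect)$ is fixed by $\promote^p$ iff the associated point $x_T \in \Omega \cap X(\bolda)$ is fixed by the order-$p$ rotation $\rho$ of $S^1$ (for part (ii) one must also account for the antidiagonal symmetry forcing the order of $\promote$ on $\SYT(\stair)$ to be even, which corresponds to the fact that the relevant rotation subgroup acts with an extra factor of $2$ coming from the $z \mapsto -z$ symmetry built into $\Omega$). Then $x_T$ being $\rho$-fixed means its Wronskian is invariant under $\rho$, hence its roots are $\rho$-orbits on $S^1$; after a change of variable $w = z^{p}$ (or $z^{2p}$), such a point corresponds to a point of a smaller Grassmannian whose Wronskian has roots on a circle, and the count of fixed points becomes a count of tableaux for that smaller problem. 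This is exactly the mechanism by which $r$-ribbon tableaux appear: an $r$-ribbon tableau of shape $\Rect$ is the combinatorial shadow of a tower of $r$ ordinary tableaux, matching the $r = M/p$ sheets of the covering $z \mapsto z^{p}$. The diagonal-symmetry condition on the $r$-ribbon tableaux, and the ``rightmost $k'$ left of rightmost $k$'' condition in part (i), arise because after passing through the covering the relevant points land in the \emph{orthogonal} Grassmannian $Y$ rather than $\Omega$ — reflecting the fact that $0$ and $\infty$ (the two fixed points of the rotation) are the special points where Convention~\ref{convention:even} forces an $\OG$-type behaviour; this is precisely the phenomenon packaged in Theorem~\ref{thm:OG}(ii) with its $\DST'(\Rect)$.

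In more detail, the steps in order: (1) fix the rotation $\rho$ of $S^1$ of order $p$ and identify $\rho$-action on $X(\bolda)$ with $\promote^p$ on $\SYT(\stair)$, via the equivariance of the Wronski map and the definition of $x_T(\bolda,\psi_{S^1})$; (2) show a point $x \in \Omega \cap X(\bolda)$ is $\rho$-fixed iff, under the substitution $z \mapsto z^{1/p}$ realized as a finite map of Grassmannians, it descends to a point $\bar x$ whose Wronskian has its $2M/p = 2r$ roots lying on a circle through $0$ and $\infty$ (namely the image of $S^1$), and such that $\bar x$ lies in the embedded \emph{orthogonal} Grassmannian $Y \subset \Gr(r, \CC_{2r}[z])$ — the orthogonality appearing because the two $\rho$-fixed points $0,\infty$ of $S^1$ carry orthogonal flags $\redF_\bullet(0)$, $\redF_\bullet(\infty)$ by Lemma~\ref{lem:orthogonalflags}, which is exactly the defining feature of the $\OG$ embedding; (3) apply Theorem~\ref{thm:OG}(ii) to count the descended points by $|\DST'(\Rect')|$ for the smaller rectangle, then lift back: a point of $Y$ together with the covering data is a diagonally symmetrical $r$-ribbon tableau, and the ``$k'$ left of $k$'' constraint is the $\DST'$ condition read through the covering (this gives part (i)); (4) for part (ii), observe that on $\SYT(\stair)$ the correspondence forces $T \in \AST(\Rect)$, and the antidiagonal symmetry pairs the rotation $\rho$ with $z \mapsto -z$, so the effective rotation has order $2p$ not $p$ — this both explains why the order of $\promote$ is always even and why one counts fixed points of $\promote^{2p}$, and now there is no residual ``left of'' constraint because the relevant descended points fill out \emph{all} of $Y$ rather than the image of $\DST'$, giving the clean count by all diagonally symmetrical $r$-ribbon tableaux.

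The main obstacle I expect is step (2): making the ``substitution $z \mapsto z^{p}$'' into an honest morphism of Grassmannians that is compatible with both the Wronski map and the embedding $\Omega$, and verifying that $\rho$-fixed points of $\Omega$ really do descend into the orthogonal Grassmannian $Y$ (and not merely into some larger subvariety). Concretely one must check that the symplectic form $[\cdot,\cdot]$ on $\VV$, restricted to the $\rho$-invariant part and pushed through the covering, becomes the symmetric bilinear form defining $Y$ — a computation that should work out because squaring interchanges the roles of $\frac1k$ and $k$ in the definitions, but which needs care with the $z^n$-coefficient condition cutting out $\VV$ and with the multiplicities at $0,\infty$. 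Once that compatibility is nailed down, everything else is an application of results already in hand (Theorem~\ref{thm:main}(iii), Theorem~\ref{thm:OG}(ii), Proposition~\ref{prop:LGchevalley}, Lemma~\ref{lem:fibrecount}) together with the combinatorial bookkeeping that identifies towers of tableaux under a $p$-fold covering with $r$-ribbon tableaux, which is standard (cf.\ \cite{vLee, Pur-ribbon}).
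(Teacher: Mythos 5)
Your overall frame---realize promotion on $\SYT(\stair)$ as rotation of $S^1$ acting on $\Omega\cap X(\bolda)$ via Theorem~\ref{thm:main}(iii), and count rotation-fixed points by ribbon tableaux---is indeed the route the paper has in mind (the paper omits the detailed argument, saying only that the theorems and proofs of \cite{Pur-ribbon} adapt with essentially no new ideas). But the step you yourself flag as the main obstacle, steps (2)--(3), is a genuine gap, and as stated I believe it is wrong. A point $x$ fixed by the order-$p$ rotation $z\mapsto\zeta z$ does not ``descend'' to a single point of a smaller Grassmannian: a $\rho$-invariant subspace is the direct sum of its intersections with the eigenspaces of $\rho$ (spanned by the monomials whose exponents lie in a fixed residue class mod $p$), so the fixed locus of the rotation on $X$, and on $\Omega$, is a union of \emph{products} of smaller Grassmannians---this tuple structure is precisely why the $r$-quotient/$r$-ribbon combinatorics appears in \cite{Pur-ribbon}---and there is no morphism ``$z\mapsto z^{1/p}$'' producing one point $\bar x\in\Gr(r,\CC_{2r}[z])$. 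Moreover the claim that the descended object lands in the orthogonal Grassmannian $Y$ is unsupported: the symplectic form $[\cdot,\cdot]$ pairs the $\zeta^{c}$-eigenspace with the $\zeta^{2n-c}$-eigenspace, so the structures induced on the eigenspace factors are symplectic or general-linear dualities, never a symmetric form, and even the degree bookkeeping fails (the descended Wronskian would have degree $2r$, whereas the Wronskian for an $\OG$-type embedding has degree $m(m+1)$). So Theorem~\ref{thm:OG}(ii) cannot be invoked as you propose, and the counts in both parts are left unestablished.

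The adaptation the paper intends is different in exactly this respect. Part (i) is the statement for the $\OG$ embedding $Y$ already recorded in \cite[Remark 1.12]{Pur-ribbon}: promotion on $\SYT(\shiftedstair)$ is rotation acting on $Y\cap X(\bolda)$; it is not obtained from $\Omega$ through a covering. For part (ii), one identifies promotion on $\SYT(\stair)\cong\AST(\Rect)$ with rotation of $S^1$ acting on $\Omega\cap X(\bolda)$, uses the fixed-point machinery of \cite{Pur-ribbon} to parametrize the points of the full fibre fixed by the relevant rotation subgroup by $r$-ribbon tableaux of shape $\Rect$ (no passage to a smaller Grassmannian: the ribbon tableau encodes the tuple/eigenspace data directly, by degenerating the orbits of roots toward the fixed points $0$ and $\infty$), and then characterizes which of these fixed points lie in $\Omega$; because the degenerate configuration is supported on $\{0,\infty\}$, it is the $\DST$-type description of Theorem~\ref{thm:main}(ii) and Convention~\ref{convention:even} that applies, which is where the \emph{diagonal} symmetry of the ribbon tableaux comes from (your intuition that $0,\infty$ force the diagonal picture is correct, but no detour through $Y$ is involved). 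Finally, the evenness of the order of promotion comes from the half-turn $z\mapsto-z$, which preserves $\Omega$ and realizes the $M$\nth power of promotion as transposition, a fixed-point-free involution on $\SYT(\stair)$; this is what forces every orbit to have even length and is why one counts fixed points of the $(2p)$\nth power. As written, your proposal replaces the load-bearing step with an unproven and implausible reduction, so it does not yet constitute a proof.
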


\begin{figure}[tb]
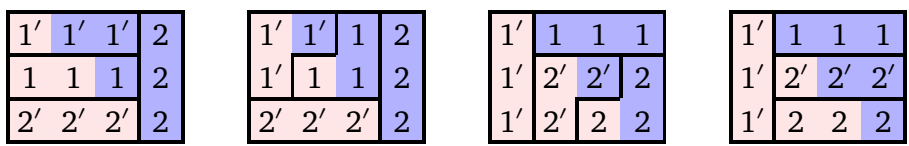

\[
{
\setlength{\yframethickness}{1pt}
{\begin{young}
]=! 1' & ?1' & =]? 1' & ?2 \ynobottom \\
]=! 1 & !1 & =]? 1 & ?2 \ynotop \ynobottom\\
]=! 2' & !2' & =]! 2' & ?2 \ynotop
\end{young}
}
\qquad
{\begin{young}
]=!1' \ynobottom & =]?1' & ]=]?1 \ynobottom& ?2  \ynobottom\\
!1' \ynotop & ]=!1 & =]?1 \ynotop & ?2 \ynotop \ynobottom\\
!2' & !2' & =]!2' & ?2 \ynotop
\end{young}
}
\qquad
{\begin{young}
]=]!1' \ynobottom & ]=?1  & ?1 & ?1 \\
]=]!1' \ynotop \ynobottom& ]=!2' \ynobottom & =]?2' & ?2 \ynobottom \\
]=]!1' \ynotop & !2' \ynotop & ]=!2 & ?2 \ynotop
\end{young}
}
\qquad
{\begin{young}
]=]!1' \ynobottom & ]=?1  & ?1 & ?1 \\
]=]!1' \ynotop \ynobottom & ]=!2' & ?2' & ?2' \\
]=]!1' \ynotop & ]=!2 & !2 & ?2 
\end{young}
}
}
\]
\caption{Diagonally symmetrical $3$-ribbon tableaux.}
\label{fig:ribbon}
\end{figure}

For ribbon tableaux, diagonal symmetry means that for $i \geq j$, if 
the entry in row $i$ and column $j$ is $k$ or $k'$,
then the entry in row $j$ and column $i+1$ is either $k$ or $k'$.
The ribbons themselves do not need to need to respect the diagonal.
For example, if $n=3$, there are four diagonally symmetrical 
$3$-ribbon tableaux, shown in Figure~\ref{fig:ribbon}.
The second and third have the property that the rightmost $k'$ is
left of the rightmost $k$, for $k =1,2$.
If $r$ is even, there are no diagonally symmetrical $r$-ribbon
tableaux of shape $\Rect$.
If $r$ is odd, there are simple formulae for enumerating 
them~\cite{FS}.
We have not included a proof of Theorem~\ref{thm:promotion}, because it
would be long and tedious.  Instead, we note that 
all of the relevant theorems and proofs in \cite{Pur-ribbon} are 
adaptable, and virtually no new ideas are needed.

The big question still remains: why
does $\Omega$ exist?  More specifically, why should there be \emph{any} 
embedding of the Lagrangian Grassmannian for which
Theorem~\ref{thm:main}(i) holds?
In this paper, we defined a particular embedding and checked
that it has the desired properties,
but the construction is somewhat mysterious --- at least it is
to the author.  If one takes the point of view that
Theorem~\ref{thm:main} should be true because of its combinatorial 
implications, then $\Omega$ is unique, 
which makes it possible
(with some thought and some experimentation) 
to arrive at the right definition.
For example, one approach is to reverse-engineer the 
definitions of $\VV$ and $[\cdot, \cdot]$ 
using Lemma~\ref{lem:restrictschubert} ---
this quickly reduces to a system of linear equations that is
easily solvable for small $n$, from which one can guess the
general pattern.  Another approach is 
to start with a formula for the Wronski map in terms of the
Pl\"ucker coordinates of $X$, and assuming
Theorem~\ref{thm:main} is true, guess the defining
equations of $\Omega$.  
Neither of these approaches is straightforward, and certainly neither
one tells us that $\Omega$ exists, \emph{a priori}.
It would be nice to have a geometric explanation that does not 
hinge on such a brute force calculation as in the proof of
Lemma~\ref{lem:orthogonalflags}.

%%%%%%%%%%%%%%%%%%%%%%%%%%%%%%%%%%%%%%%%%%%%%%%%%%%%%%%%%%%%%%
%%%%%%%%%%%%%%%%%%%%%%%%%%%%%%%%%%%%%%%%%%%%%%%%%%%%%%%%%%%%%%

\footnotesize%
%   \textit{Address}, K.~Purbhoo:
   \textsc{Combinatorics and Optimization Department,
       University of Waterloo, 200 University Ave. W.  Waterloo,
       ON, N2L 3G1, Canada.} \texttt{kpurbhoo@uwaterloo.ca}.

\begin{thebibliography}{00}

%\bibitem{BSS}
%G. Benkart, F. Sottile and J. Stroomer,
%\emph{Tableau Switching: Algorithms and Applications},
%J. Combin. Theory Ser. A, \textbf{76} (1996), no.~1, 11--43.

\bibitem{Che} C. Chevalley,
\emph{Sur les d\'ecompositions cellulaires des espaces $G/B$},
in Algebraic Groups and their Generalizations: Classical Methods,
W. Haboush, ed., vol. 56, Part 1. of Proc. Sympos. Pure Math.,
Amer. Math. Soc. 1994, 1--23.

\bibitem{EG-deg} A. Eremenko and A. Gabrielov,
\emph{Degrees of real Wronski maps},
Disc. Comp. Geom., \textbf{28} (2002), 331--347.

%\bibitem{EG-pole} A. Eremenko and A. Gabrielov,
%\emph{Counterexamples to pole placement by static output feedback.}
%Linear Algebra Appl., \textbf{351/352} (2002), 211--218.

%\bibitem{Eis} D. Eisenbud,
%\emph{Commutative algebra with a view toward algebraic geometry},
%Grad. Texts in Math. \textbf{150}, 1995.

\bibitem{EH} D. Eisenbud and J. Harris,
\emph{Divisors on general curves and cuspidal rational curves},
Invent. Math., \textbf{74} (1983), 371--418.

\bibitem{FK} B. Fontaine and J. Kamnitzer,
\emph{Cyclic sieving, rotation, and geometric representation theory}
Selecta Math. (to appear).

\bibitem{FRT} J. M. Frame, G. d. B. Robinson  and R. M. Thrall, 
\emph{The hook graphs of the symmetric group},
Canad. J. Math., \textbf{6} (1954), 316--325.

%\bibitem{FL} S. Fomin and N. Lulov,
%\emph{On the Number of Rim Hook Tableaux}, 
%Zapiski Nauchn. Sem. POMI \textbf{223} (1995), 219--226.

\bibitem{FS} S. Fomin and D. Stanton,
\emph{Rim hook lattices},
Report No. 23, (1991/92) Inst. Mittag-Leffler.

%\bibitem{Ful} W. Fulton,
%\emph{Young tableaux with applications to representation theory and geometry},
%Cambridge U.P., New York, 1997.

\bibitem{Hai-mixed} M. Haiman,
\emph{On mixed insertion, symmetry, and shifted Young tableaux},
J. Combin. Theory Ser. A, \textbf{50} (1989) 196--225.

\bibitem{Hai-dual} M. Haiman,
\emph{Dual equivalence with applications, including a conjecture
of Proctor}, Discrete Math. \textbf{99} (1992), 79--113.

%\bibitem{HB} H. Hiller and B. Boe,
%\emph{Pieri Formula for $\mathrm{SO}_{2n+1}/\mathrm{U}_n$ and 
%$\mathrm{SP}_n/\mathrm{U}_n$},
%Adv. Math., \textbf{62} (1986), 49--67.

\bibitem{HH}
P. N. Hoffman and J. F. Humphreys, 
\emph{Projective representations of the symmetric groups}, 
Oxford U. P., 1992.

%\bibitem{HHMS}
%J. Hauenstein, N. Hein, A. Mart\'in del Campo, and F. Sottile,
%\emph{Beyond the Shapiro Conjecture and Eremenko-Gabrielov lower bounds},
%website (2010)
%\texttt{
%www.math.tamu.edu/{\footnotesize$\sim$}sottile/research/pages/lower\_Shapiro/index.html}.
%
%\bibitem{HP} W.V.D. Hodge and D. Pedoe,
%\emph{Methods of algebraic geometry, Vol. II},
%Cambridge U.P., 1952.

%\bibitem{JK}
%G. James and A. Kerber,
%\emph{The representation theory of the symmetric group}, 
%Encyclopedia of Mathematics \textbf{16}, Addison-Wesley, 1981

%\bibitem{KS}
%V. Kharlamov and F. Sottile,
%\emph{Maximally inflected real rational curves}, 
%Mosc. Math. J. \textbf{3} (2003), no. 3, 947--987, 1199--1200. 

\bibitem{Kle} S.~Kleiman,
\emph{The transversality of a general translate},
Compositio Math. {\bf 28} (1974), 287-297.

%\bibitem{KLS}
%A. Knutson, T. Lam and D. Speyer,
%\emph{Projections of Richardson Varieties},
%preprint, \texttt{arXiv:1008.3939}.

%\bibitem{LLT}
%A. Lascoux, B. Leclerc and J.-Y. Thibon,
%\emph{Ribbon tableaux, Hall-Littlewood functions, quantum affine algebras 
%and unipotent varieties}, 
%J. Math. Phys. \textbf{38} (1997), 1041--1068.

\bibitem{vLee}
M. van Leeuwen,
\emph{Some bijective correspondences involving domino tableaux},
Elec. J. Combin. \textbf{7} (2000), no. 1, R35 (electronic).

%\bibitem{MS} E. Miller and B. Sturmfels,
%\emph{Combinatorial commutative algebra}, Grad. Texts in Math. \textbf{227},
%2005.
  
\bibitem{MTV1} 
E. Mukhin, V. Tarasov and A. Varchenko,
\emph{The B. and M. Shapiro conjecture in real algebraic geometry
and the Bethe Ansatz}, Ann. Math. \textbf{170} (2009), no. 2, 863--881.

\bibitem{MTV2}
E. Mukhin, V. Tarasov and A. Varchenko,
\emph{Schubert calculus and representations of general linear group},
J. Amer. Math. Soc.  22  (2009),  no. 4, 909--940.

\bibitem{Pra}
P. Pragacz,
\emph{Algebro-geometric applications of Schur $S$- and $Q$- polynomials},
in Topics in invariant theory,
Seminaire d'Algebre Dubreil-Malliavin 1989--1990 (M.-P. Malliavin ed.),
Springer Lecture
Notes in Math. 1478, 130--191, Springer, 1991.

\bibitem{Pur-Gr}
K. Purbhoo,
\emph{Jeu de taquin and a monodromy problem for Wronksians
of polynomials}, Adv. Math.  \textbf{224} (2010) no. 3, 827--862.

\bibitem{Pur-OG}
K. Purbhoo,
\emph{Reality and transversality for Schubert calculus in 
$\mathrm{OG}(n,2n{+}1)$},
Math. Res. Lett. \textbf{17} (2010) no. 6, 1041--1046.

\bibitem{Pur-shifted}
K. Purbhoo,
\emph{The Wronski map and shifted tableau theory},
Int. Math. Res. Not. (2011) no. 24, 5706--5719.

\bibitem{Pur-ribbon}
K. Purbhoo,
\emph{Wronskians, cyclic group actions, and ribbon tableaux},
Trans. Amer. Math. Soc. \textbf{365} (2013), 1977--2030.

\bibitem{PW}
S. Pon and Q. Wang,
\emph{Promotion and evacuation on standard Young tableaux of 
rectangle and staircase shape},
Elec. J. Combin. \textbf{18} (2011) no. 1, R18 (electronic).

\bibitem{Sag}
B. E. Sagan,
\emph{Shifted tableau, Schur $Q$-functions, and a conjecture of Stanley,}
J. Comb. Theory, ser. A. \textbf{45} (1987), 62--03.

\bibitem{Rho}
B. Rhoades,
\emph{Cyclic sieving, promotion, and representation theory},
J. Combin. Theory Ser. A, \textbf{117} (2010) no. 1, 38--76.

\bibitem{Sot-Real}
F. Sottile,
\emph{Some real and unreal enumerative geometry for flag manifolds},
Mich. Math. J., \textbf{48} (2000), 573--592.


\bibitem{Sot-F}
F. Sottile,
\emph{Frontiers of reality in Schubert calculus},
Bull. Amer. Math. Soc. \textbf{47}  (2010),  no. 1, 31--71.

%\bibitem{Sag-sym}
%B. Sagan,
%\emph{The symmetric group: representations, combinatorial algorithms and
%symmetric functions}, Grad. Texts in Math. \textbf{203}, 2001.

%\bibitem{Sag-CSP}
%B. Sagan,
%\emph{The cyclic sieving phenomenon: a survey},
%London Math. Soc. Lecture Note Ser. (to appear).

%\bibitem{SW}
%D. Stanton and D. White,
%\emph{A Schensted algorithm for rim-hook tableaux},
%J. Combin. Theory Ser. A, \textbf{40} (1985) 211--247.

%\bibitem{Sta}
%R. Stanley,
%\emph{Enumerative combinatorics}, vol. 2, 
%Cambridge University Press, Cambridge, 1999.

\bibitem{Sch}
M.-P.~Sch\"utzenberger,
\emph{La correspondance de Robinson}, in
Combinatoire et repr\'esentation du groupe sym\'etrique,
Lecture Notes in Math., \textbf{579} (1977), Springer-Verlag,
59--113.

\bibitem{Ste}
J.R. Stembridge,
\emph{Shifted tableaux and the projective representations
of the symmetric group},
Adv. Math. \textbf{74} (1989), 87--134.

\bibitem{Thr} R. M. Thrall, 
\emph{A combinatorial problem}, 
Michigan Math. J. \textbf{1} (1952), 81--88.

%\bibitem{Ste-evac}
%J. Stembridge,
%\emph{Canonical bases and self-evacuating tableaux},
%Duke Math. J., \textbf{82} (1996) no. 3, 585--606.

\bibitem{Wes}
B. Westbury,
\emph{Invariant tensors and the cyclic sieving phenomenon},
preprint, \texttt{arXiv:0912.1512}.

\bibitem{Wor}
D. Worley,
\emph{A theory of shifted Young tableau},
Ph. D. thesis, M.I.T., 1984, available at
{\tt http://hdl.handle.net/1721.1/15599}.
\end{thebibliography}
\end{document}